\def\@evenfoot{\rule{0pt}{20pt}[\today] \hfill [{\tt \jobname.tex}]}
\def\@oddfoot{\rule{0pt}{20pt}{[\tt \jobname.tex}]\hfill [\today]}
\newtheorem{theorem}{Theorem}
\newtheorem{proposition}[theorem]{Proposition}
\theoremstyle{definition}
\newtheorem{definition}[theorem]{Definition}
\newtheorem{example}[theorem]{Example}
\newtheorem{remark}[theorem]{Remark}
\renewcommand\vec[1]{\overrightarrow{#1}}
\def\RMod{\hbox{$R$-{\tt Mod}}}
\def\cO{{\check {\ttO}}}
\def\Cycl{{\it Cycl}}
\def\Tau{\hbox{${\mathcal T}$\hskip -.3em}}
\def\ttA{{\tt A}}
\def\Span#1{{\rm Span}{\{#1\}}}
\def\Setp{{\tt Set}^\circ}
\def\St{\hbox{\it St\/}}
\def\Mikinka#1#2#3{{#1\oplus #2 \oplus #3}}
\def\Alicek#1#2#3{{\det(#1)\ot \det(#2) \ot \det(#3)}}
\def\fdV{{\tt fdVec}}\def\qfdV{\fdV/{{\mathbb F}}}
\def\Flicek#1#2#3{{\Ch(#1)\! \ot\! \Ch(#2)\! \ot\! \Ch(#3)}}
\def\Iso{{\rm Iso}(\ttQ)}
\def\Xarrow#1{{\stackrel {#1} \longrightarrow}}
\def\epi{ \twoheadrightarrow}
\def\komp{\raisebox{-.2em}{\rule{0pt}{0pt}}}
\def\Im{{\rm Im}}
\def\eq{{\rm Eq}}
\def\Ker{{\rm Ker}}
\def\Cok{{\rm Coker}}
\def\Ch{{\EuScript Ch}}
\def\frakC{{\mathfrak C}}
\def\Cha{{\tt Chaos}}
\def\tO{{\widehat {\tt O}}} 
\def\prop{{\mathbb P}}
\def\Ass{{\EuScript Ass}}
\def\Coll#1#2{\{#1(#2)\}_{#2 \in \bbN}}
\def\ttA{{\tt A}}
\def\CP{{$\cooP$-$\oP$}}
\def\Cat{{\tt Cat}}
\def\sfD{{\EuScript D}}
\def\fifu{{\EuScript F}}
\def\redukce#1{\vbox to 1.5em{\vss\hbox{$#1$}}}
\def\inv#1{{#1}^{-1}}
\def\d{\dFin}
\def\bbN{{\mathbb N}}
\newcommand\cev[1]{\overleftarrow{#1}}
\def\dash{{\hbox{\hskip .15em -\hskip .1em}}}
\def\sFSet{{\tt sFSet}}
\def\bic#1{{\left\|#1\, \right\|}}
\def\cooP{\raisebox{.7em}{\rotatebox{180}{$\EuScript P$}}}
\def\Arr{{\rm Arr(\ttC)}}\def\ArrM{{\rm Arr}(M)}
\def\ttC{{\tt C}}
\def\ii{\raisebox{.7em}{\rotatebox{180}{$i$}}\!}
\def\iii{\raisebox{.7em}{\rotatebox{180}{$i\!i$}}\!}
\def\iiii{\raisebox{.7em}{\rotatebox{180}{$i\!i\!i$}}\!}
\def\rmiiii{\raisebox{.7em}{\rotatebox{180}{iii}}}
\def\oL{{\EuScript L}}\def\oR{{\EuScript R}}
\def\oM{{\EuScript M}}\def\oL{{\oP}}
\def\sou#1{{\mathbf s(#1)}}\def\tar#1{{\mathbf t}(#1)}
\def\ttV{{\tt V}}
\def\phi{\varphi}
\def\ttO{{\tt O}}
\def\ttQ{{\tt Q}}
\def\ot{\otimes}
\def\Rada#1#2#3{#1_{#2},\dots,#1_{#3}}
\def\fib{\triangleright}\def\cof{\triangleleft}
\def\oP{{\EuScript P}}
\def\id{{\mathbb 1}}
\def\bbk{{\mathbb k}}
\def\dBq{{\tt BBq(\frakC)}}
\def\bez#1#2{\genfrac{}{}{0pt}{3}{\hbox{$#1$}}{\hbox{$#2$}}}
\def\dFin{\genfrac{}{}{0pt}{3}{\raisebox{-.3em}{\Fin}}{\rule{0em}{.5em}\niF}}
\def\dd{
\raisebox{.8em}{
$\genfrac{}{}{0pt}{3}{\raisebox{-.8em}{$\Delta_{\rm
        alg}$}}{\raisebox{-.8em}{\redukce{\atleD}}}$}
}
\def\niF{\raisebox{.7em}{\rotatebox{180}{$\Fin$}}}
\def\Ateb{\raisebox{.5em}{\rotatebox{180}{$\beta$}}}
\def\atleD{\raisebox{.7em}{\rotatebox{180}{$\Delta_{\rm alg}$}}}
\def\redukce#1{\vbox to .3em{\vss\hbox{#1}}}
\def\frakC{{\mathfrak C}}
\def\rada#1#2{{#1,\ldots,#2}}
\def\Rada#1#2#3{#1_{#2},\dots,#1_{#3}}
\def\Fin{{\tt sFSet}}
\def\Bq{{\tt Bq}}
\def\martin#1\endmartin{\noindent{{\bf Martin:}\ {\color{red} #1} 
    \hfill\rule{10mm}{.75mm}} \break}
\subjclass[2010]{???????}
\providecommand\@dotsep{5}
\def\listtodoname{List of Todos}
\def\listoftodos{\@starttoc{tdo}\listtodoname}
\title[Bivariant operadic categories]
{Bivariant operadic categories}
\thanks{Supported by the Institute of
Mathematics, Czech Academy of Sciences (RVO 67985840),
and Praemium Academi{\ae} of Martin Markl.}
\email{markl@math.cas.cz}
\subjclass[2010]{Primary 18M85, 18M60; secondary 18D70}
\begin{document}
\bibliographystyle{plain}

\parskip3pt plus 1pt minus .5pt
\baselineskip 15.5pt  plus .5pt minus .5pt

\author[M.\ Markl]{Martin Markl}
\address{The Czech Academy of Sciences, Institute of Mathematics, {\v Z}itn{\'a} 25,
         115 67 Prague 1, The Czech Republic}

\begin{abstract}
We develop a self-dual, bivariant extension of the 
concept of an operadic category, 
its associated operads and their algebras. Our new theory covers, besides all
classical subjects, also generalized
traces and bivariant versions of Kapranov's charades. It is, 
moreover, combinatorially rich and aesthetically pleasing.
\end{abstract}

\maketitle

\tableofcontents

\section*{Introduction}

\lettrine{\color{red} O}{peradic} categories, the
related operads and their algebras
were introduced in~\cite{duodel}, cf.~also Section~1 of the freely
available article~\cite{env}.
Any operadic category $\ttO$ is, by definition,
equipped with the cardinality functor $\ttO
\stackrel{|-|}\longrightarrow \Fin$ 
to the skeletal category of finite sets.  Each morphism $h : S\to T$ in
$\ttO$ has $n$ fibers $\Rada F1n \in \ttO$,  $n \in \bbN$, where $[n] := \{\rada
  1n \}$ is the
cardinality of the target $T$ of $h$. We will express this fact 
by writing 
\begin{subequations}
\begin{equation}
\label{Uz 16 dni!}
\Rada
F1n \ \fib  S \stackrel  h\longrightarrow T.
\end{equation} 
Operadic categories support operads. An {\em $\ttO$-operad \/} is a collection 
$\oP = \{\oP(T)\}_{T \in \ttO}$ of objects of some~symmetric monoidal
category $\ttV = (\ttV,\ot,1)$, with the compositions
\begin{equation}
\label{Zitra jedu do Kolina.}
\gamma_h : \oP(T) \ot \oP(F_1) \ot \cdots \ot \oP(F_n)
\longrightarrow \oP(S),
\end{equation}
associated to each morphism $h: S \to T$ with fibers $\Rada F1n$ 
as in~(\ref{Uz 16 dni!}). 

Finally, operads have algebras; 
a {\em $\oP$-algebra} is collection $A = \{A_c \}_{c \in
\pi_0(\ttO)}$ of objects of $\ttV$ indexed by the set $\pi_0(\ttO)$
of connected components of $\ttO$, equipped with structure operations 
\begin{equation}
\label{Dnes zverejneny propozice na Safari.}
a_T: \oP(T) \ot \bigotimes_{c \in \sou T}A_c \longrightarrow A_{\tar T}, \
T \in \ttO,
\end{equation}
\end{subequations}
where $\sou T$, the {\/\em source\/} of $T$, is the list of connected
components of the fibers of the identity morphism $\id_T : T \to T$,
and where $\tar T$, the {\/\em target\/} of $T$, is the connected
component of $\ttO$ to which~$T$ belongs. The operations~(\ref{Zitra
  jedu do Kolina.}) and~(\ref{Dnes zverejneny propozice na Safari.})
are, of course, subjects of appropriate axioms cf.~the first sections
of~\cite{duodel} or~\cite{env}. 
The situation is captured by the triad
\[
\xymatrix{
\boxed{\hbox {algebras}} \ \ar@{=>}[r]
&
\ \boxed{\hbox {\ operads}} \ \ar@{=>}[r]
&
\boxed{\ \hbox {\ operadic categories}}
}
\]
in which ``$A \Longrightarrow B$'' must be read as \ ``$A$ is governed by $B$.''

The above concept, inspired by Batanin's
$n$-operads~\cite{batanin:globular},  
covers, either as `operads' or as their `algebras,' 
the most common operad-like structures, 
such as  the traditional operads, their variants such as cyclic or modular
operads, and also diverse versions of PROPs such as wheeled properads, 
dioperads, and even more exotic objects such as permutads 
and pre-permutads. However,
it is still not
fully satisfactory for the following reasons. First,
the sources and the target of a given $T\in \ttO$ are
objects of different types -- while the sources very crucially use the
fiber structure of $\ttO$, the target does not refer to it at all.
Second, each $T$ has only one target, so the `operads' of the theory
cannot have operations with multiple outputs. 

Our goal is to modify and extend the standard concept of 
an operadic category so that morphisms will
possess, along with the fibers as before, also the {\em
  cofibers\/}, so instead of~(\ref{Uz 16 dni!}) we would have
something as
\[
\Rada F1n \ \fib S \stackrel h\longrightarrow T \cof \ \Rada D1m. 
\]
 The target of an object $T$ will then be the list of 
connected components of the cofibers of the identity $\id_T : T \to T$.   
As expected, we define cofibers by dualization of the properties of
the fibers. We will
also need  suitable compatibilities between fibers and cofibers. 
The emerging self-dual concepts of  {\em di-} and {\em
  bioperadic\/} categories will be the main subjects 
of this article.

Let us point out some salient features of these two new concepts. 
While operadic categories have operads, dioperadic categories also have
cooperads and new types of `bimodules.' However, 
it turns out that to define
algebras for operads, coalgebras for cooperads, and traces for
bimodules, one needs to impose some additional conditions. Bioperadic
categories are defined as dioperadic categories that
satisfy them.  The landscape is sketched in 
Figure~\ref{Jsem druhy den v Haife.}.
\begin{figure}
\[
\xymatrix{
&\boxed{\hbox {algebras, coalgebras, traces}} \ar@{=>}[d]
\\
\boxed{\hbox {operads, cooperads, bimodules}}\ar@{=>}[d]&\ \boxed{\hbox
  {operads, cooperads, bimodules}}\ar@{=>}[d]\ar@{_{(}->}[l] 
\\
\boxed{\hbox {dioperadic categories}}\   &
\ar@{_{(}->}[l] \
\boxed{\hbox {bioperadic categories}}
}
\]
\caption{\label{Jsem druhy den v Haife.}Dioperadic duad (left) and
  bioperadic triad (right).}
\end{figure}
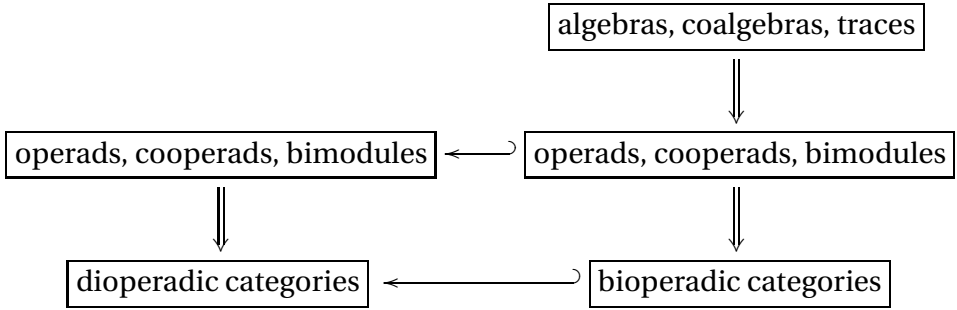
In our setup, traces are objects governed by bimodules in the same way
that algebras
are governed by operads and coalgebras are governed by cooperads. 
The terminology is explained by
Examples~\ref{V Cechach je plno snehu a ja trcim v Izraeli.} 
and~\ref{Vcera jsem byl na schuzi Neuronu.}. By Proposition~\ref{Dnes
  jsem byl s Jarkou na hrbitove.}, 
the bivariant concept presented here does indeed generalize the
theory of unital operadic categories, their operads and algebras,
introduced in~\cite{duodel}.

Abelian categories admit a bioperadic structure,
given by the kernels and the cokernels, having
an interesting, unexpected property. This property is abstracted in
Section~\ref{Jsem pred-predposledni den v Haife.} by the notion  of a 
{\em bicharadic\/} category. The r\^oles of algebras are
played by {\em bicharades}, which are bivariant generalizations 
of the (absolute) Kapranov's charades~\cite[Definition~3.2]{kapranov:langlands}. 
The charadic structure of determinants and Steinberg modules naturally
extends to our bivariant concept.

\vskip .3em
\noindent
{\bf The layout of the paper.}
The article opens with a short section dedicated to a certain property
of fibers in an operadic category, and the induced maps. 
The main body of the article then 
contains two parallel parts. In Sections~\ref{Jsem dnes
  nejak bez energie.} and~\ref{Posloucham ricercar.}
we focus on di- and bioperadic categories, whose morphism have only
one fiber and one cofiber. The general case is postponed to
Sections~\ref{Vsechno je spatne.} and~\ref{Jaroslav Vodrazka nahral ty
  fantasticke veci.}. The correspondence between the individual items
is described~in
\begin{center}
\def\arraystretch{1.3}
\begin{tabular}{|l|l|} 
\hline  \multicolumn{1}{|c|}{{unary  -  Secs.~\ref{Jsem dnes
  nejak bez energie.},~\ref{Posloucham ricercar.}}}
& \multicolumn{1}{c|}{{general - Secs.~\ref{Vsechno je spatne.},~\ref{Jaroslav Vodrazka nahral ty
  fantasticke veci.}}}
\\
\hline \hline 
Definition~\ref{Musim se objednat na endokrinologii za mesic.}  & 
Definition~\ref{Musim se objednat na endokrinologii.}
\\
\hline
Definition~\ref{Musim se objednat na ocni.} &
Definition~\ref{Dnes jsem byl s Jarkou na obede.}
\\
\hline
Definition~\ref{Jarka ma ovoce.} &
Definition~\ref{az do dnu antiky}
\\
\hline
Proposition~\ref{Najdu odvahu se dnes v te strasne zime projet na
    kole?} & Proposition~\ref{zasute refremy kterym uz chybeji slova}
\\ 
\hline
Definition~\ref{Jarka dnes jela do Motola.} &
Definition~\ref{Dnes dam kolecko do opravy.}
\\
\hline
Definition~\ref{Podari se mi ten clanek vubec publikovat?}&
Definition~\ref{Navlekl jsem si gumu do prdlavek.}
\\
\hline
\end{tabular}
\\
\end{center}

We have chosen this arrangement for
expository reasons. While the unary, i.e.\  one fiber\textendash{}one 
cofiber case, exhibits all the salient
features of these novel concepts, it avoids complications caused by
the complex combinatorics of multiple (co)fibers, and the associated numerological
conditions. We assumed that starting directly with the general case would make
the article unreadable; the daring reader may of course 
skip Sections~\ref{Jsem dnes
  nejak bez energie.} and~\ref{Posloucham ricercar.}. 
Between the two parallel parts mentioned above we have inserted
one section on bicharades, which
are unary in nature. In the brief epilogue, as an inspiration for 
future work, we propose possible  modifications of our~setup.

\vskip .3em
\noindent 
{\bf Conventions.} Given a category $\ttC$, we denote by
$\ttC^\dagger$ the opposite category, and by $(-)^\dagger :
\ttC \to \ttC^\dagger$ the corresponding contravariant isomorphism.
If the meaning is clear from the context, we will abuse the
notation and use the same symbols for the maps in $\ttC$ and 
their images in $\ttC^\dagger$.
By $\pi_0(\ttC)$ we denote the set of connected components of $\ttC$
and by $\pi_0(T)$ the connected component to which an object $T \in
\ttC$ belongs.
We will assume that the set of connected components of
categories considered here 
is small with respect to  a sufficiently large surrounding universe.
By a~luff (full read backwards) subcategory we mean a subcategory with
the same objects as the overcategory.

Unless otherwise noted, all operadic categories
considered here are  strict and nonunital. The latter means that we do
not assume the existence of the chosen local terminal objects and the
associated axioms in~\cite[page~1634]{duodel}. 
The simplified axioms for unary, i.e.\ one-fiber,  
operadic categories, sufficient for Sections~\ref{Jsem dnes
  nejak bez energie.}\textendash{}\ref{Jsem pred-predposledni den v Haife.}, 
can be found in~\cite{blob}. 

We use the definition of operads
in which the components of the fibers act,
unlike in~\cite[Definition~1.11]{duodel}, from the right on the 
component of the target. Also, operads in this article act, unlike in~\cite[Definition~1.11]{duodel}, 
on their algebras from
the left. This convention is closer
to the traditional setup.

Finally, we denote by $\bbN$ the set of natural numbers including $0$ and, for
$n \in \bbN$, by $\Sigma_n$ the symmetric group on $n$ elements. 
All algebraic objects will live in a symmetric monoidal
category $\ttV = (\ttV,\ot,1)$.

\vskip .3em
\noindent 
{\bf Acknowledgment.} 
I am indebted to the participants of my seminar held at the Mathematical
Institute in Prague, especially Michael~Batanin and Dominik~Trnka, for many
useful suggestions and comments. Last but not least, I would like to thank
Jaroslav \hbox{Vodr\'a\v zka} for taking me back 50 years in my memories.

\section{Perfect operadic categories}

\lettrine{\color{red} O}{ur} original definition 
of operadic categories~\cite[Section~1]{duodel} 
required the existence of selected local 
terminal objects and the corresponding axioms, i.e.\ the {\/\em unitality\/}. 
In the course of working on~\cite{blob}, it became clear that without 
assuming the existence 
of local terminal objects, the theory takes 
on an unexpected flexibility, including new examples.
We also understood that the unitality is a combination of two conditions
of different characters -- the left and the right~unitality: 

\begin{definition}
\label{Opet cele prepisuji.}
Let $\ttO$ be an operadic category equipped with a family
\begin{equation}
\label{Jsem posledni den v Cuernavace.}
\big\{U_c \in \ttO \ | \ c \in \pi_0(\ttO) \big\}
\end{equation}
of local terminal objects of cardinality $[1]$, such that  $U_c$
belongs to the connected component~$c$ of $\ttO$. 
The category $\ttO$ is {\em
  left unital \/} if the fibers of the identity
automorphism $\id_S:S \to S$ belong to the family~(\ref{Jsem posledni
  den v Cuernavace.}) of the chosen local terminal objects, 
for each $S \in \ttO$.  The category $\ttO$ is {\em right
  unital \/} if the fiber functor restricted to $\ttO/U_c$ is the
domain functor, for each  $c \in \pi_0(\ttO)$.
Finally, $\ttO$ is {\em unital \/} if it
is both left and right unital.
\end{definition}

The property introduced in the following definition is automatically
satisfied by unary operadic categories.

\begin{definition}
\label{Boli mne prave rameno.}
To each operadic category $\ttO$ we associate a substructure $\ttO_F \subset
\ttO$ consisting of the fibers of morphisms of $\ttO$ and of the induced
morphisms between them. We say that $\ttO$ is {\/\em perfect\/} if
$\ttO_F$ is a subcategory of $\ttO$. We then call $\ttO_F$
the {\em fiber subcategory\/} of $\ttO$.
\end{definition}

Definition~\ref{Boli mne prave rameno.} needs an explanation.
Each chain $S \xrightarrow h T \xrightarrow g R$ of morphisms
of~$\ttO$ induces, for $i \in |R|$,
the diagram
\[
\xymatrix@R=-0.3em@C=.8em{F
  \ar[rr]^(.55){h_i}   
&&G \hskip -.5em
\\
\hskip -2em
\hskip 1.9em\raisebox{.7em}{{\rotatebox{270}{$\fib$}}} && 
\hskip -.1em\raisebox{.7em}{{\rotatebox{270}{$\fib$}}} \hskip -.5em
\\
S \ar[rr]^{h}  \ar@/_.9em/[ddddddr]_(.4){gh}   &&
T \ar@/^.9em/[ddddddl]^(.4){g} \hskip -.5em
\\&& 
\\&& 
\\&& 
\\&&
\\ &&
\\
&R &
} 
\] 
in which $F \xrightarrow {h_i} G$ is the induced map between the $i$th
fibers of $gh$ and $g$, respectively. 
The perfectness of $\ttO$ means that the composite of induced
morphisms is again an induced morphism. The~structure~$\ttO_F$ is then
an operadic subcategory of $\ttO$.

\begin{proposition}
\label{Jarka opet nemocna.}
Each right unital operadic category $\ttO$ is perfect, with $\ttO_F =
\ttO$.  
\end{proposition}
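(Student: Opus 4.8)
The plan is to show that under right unitality every fiber of every morphism is already an object of $\ttO$ "in its own right'' — more precisely, that every morphism $h:S\to T$ with $i$th fiber $F_i$ fits into the right-unital picture so that $F_i$ may be identified with (the domain of) a morphism over the chosen local terminal object $U_c$, where $c=\pi_0(F_i)$. Concretely, for a morphism $h:S\to T$ and $i\in|T|$, I would consider the unique morphism $u_i:T\to U_c$ in the local slice $\ttO/U_c$ (using that $U_c$ is local terminal in the component $c$ of $T$) — wait, more carefully: I restrict attention to the fibers. The key observation is that right unitality says the fiber functor $\ttO/U_c\to\ttO$ is just the domain functor. So for any $g:X\to U_c$, the unique fiber of $g$ is $X$ itself, and the induced map on fibers coming from a morphism in the slice $\ttO/U_c$ is literally that morphism. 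Thus every object of $\ttO$ lying in component $c$, being the domain of its canonical map to $U_c$, is a fiber; hence $\ttO_F$ has the same objects as $\ttO$.

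Next I would handle the morphisms. Given any morphism $\phi:X\to Y$ in $\ttO$, say with $Y$ in component $c$, I claim $\phi$ is an induced map between fibers. Form the chain $X\xrightarrow{\phi}Y\xrightarrow{u}U_c$ where $u$ is the (unique, since $U_c$ is local terminal) morphism in $\ttO/U_c$. Apply the fiber construction from the displayed diagram in Definition~\ref{Boli mne prave rameno.} to this chain: the unique fiber of $u$ is $Y$, the unique fiber of $u\phi$ is $X$ (both by right unitality, since the fiber functor over $U_c$ is the domain functor), and the induced map $X\to Y$ between these fibers is exactly $\phi$ — again because the fiber functor on the slice $\ttO/U_c$ is the identity-on-morphisms domain functor. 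Therefore every morphism of $\ttO$ is an induced morphism between fibers, so $\ttO_F$ contains all morphisms of $\ttO$, giving $\ttO_F=\ttO$ as a subcategory; in particular $\ttO_F$ is closed under composition, so $\ttO$ is perfect.

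The main obstacle I anticipate is purely bookkeeping: making precise the phrase "the fiber functor restricted to $\ttO/U_c$ is the domain functor'' and checking that it really does identify the \emph{induced} map (in the sense of the big diagram with $F\xrightarrow{h_i}G$) with the original morphism $\phi$, rather than merely with some morphism abstractly isomorphic to it. This requires unwinding how the induced map $h_i$ is defined from an operadic-category chain and verifying the relevant axiom (the one governing fibers of composites, used in the diagram) collapses to a triviality when the base is $U_c$. One must also be mildly careful that the index $i$ ranges over $|U_c|=[1]$, so there is genuinely a single fiber and no reindexing subtlety. Once that identification is in hand, the statement $\ttO_F=\ttO$ — and hence perfectness — is immediate, since a subcategory containing all objects and all morphisms of $\ttO$ is $\ttO$ itself, and $\ttO$ is trivially closed under composition.
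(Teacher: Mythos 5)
Your argument is correct and is exactly the content the paper leaves implicit when it declares the proof ``obvious'': right unitality makes every object $X$ the (unique) fiber of its canonical map $X\to U_{\pi_0(X)}$, and the chain $X\xrightarrow{\phi}Y\xrightarrow{u}U_c$ exhibits any morphism $\phi$ as the induced map between the fibers of $u\phi$ and $u$, since the fiber functor on $\ttO/U_c$ is the domain functor. Hence $\ttO_F=\ttO$, which is trivially a subcategory, so $\ttO$ is perfect.
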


The proof is obvious. Later we will see several perfect operadic
categories that are not right unital.

\section{Unary dioperadic categories}
\label{Jsem dnes nejak bez energie.}

\lettrine{\color{red} I} {n} this section we propose a `self-dual closure' of the notion of
a unary operadic category; the non-unary case will be addressed in
Section~\ref{Vsechno je spatne.}.

\subsection{Unary operadic categories}
By definition, the cardinality of
each object of an
unary operadic category $\ttO$ is $[1]$.  Morphisms  thus have 
only one fiber, so~(\ref{Uz 16 dni!}) reduces to
\redukce{$F \, \fib \, S \stackrel h\longrightarrow T$}. The unarity
brings some simplifications to the axioms, which are in this
simplified form presented in~\cite[Section~1]{blob}. 
The fiber structure is in the unary case expressed by a single {\em fiber
  functor\/}  $\fifu :\sfD(\ttO) \to \ttO$,
cf.~\cite[Definition~3]{blob}, 
from the {\em  d\'ecalage\/}
\begin{equation}
\label{Dovolam se ji?}
\sfD(\ttO) :=
\textstyle
\coprod_{c \in \ttO}
\ttO/c
\end{equation}
of the category $\ttO$. The following proposition
describes a particular kind of unary operadic categories.
We will refer to it at several places later in this paper.

\begin{proposition}
\label{Na vikend se pocasi zhorsi.}
Let $\ttO$ be a small category and $r : \ttO \to \ttO$ an endofunctor. 
The rule that defines the fiber of a map $h : S \to T$ in $\ttO$ to be
$r(S)$, that is, $r(S) \fib S \stackrel h\to T$, and the induced map
between its fibers as $h_R := r(h)$  in 
\begin{equation}
\label{Pujdeme s Jarkou na demonstraci?}
\xymatrix@R=-0.3em@C=.8em{r(S)
  \ar[rr]^(.55){h_R:=r(h)}   
&&r(T) \hskip -.5em
\\
\hskip -2em
\hskip 1.9em\raisebox{.7em}{{\rotatebox{270}{$\fib$}}} && 
\hskip -.1em\raisebox{.7em}{{\rotatebox{270}{$\fib$}}} \hskip -.5em
\\
S \ar[rr]^{h}  \ar@/_.9em/[ddddddr]_(.4){gh}   &&
T \ar@/^.9em/[ddddddl]^(.4){g} \hskip -.5em
\\&& 
\\&& 
\\&& 
\\&&
\\ &&
\\
&R &
}
\end{equation}
converts $\ttO$ to a (nonunital) unary operadic category if and only if $r$ is
idempotent, i.e.\  $r^2=r$. 
\end{proposition}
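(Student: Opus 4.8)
The plan is to unwind the axioms of a unary (nonunital) operadic category from \cite{blob} and check that, for the prescribed fiber assignment $r(S)\fib S\xrightarrow{h}T$ with induced maps $h_R:=r(h)$, each axiom becomes either a triviality or exactly the assertion $r^2=r$. First I would recall the data: a unary operadic category structure is a fiber functor $\fifu:\sfD(\ttO)\to\ttO$ together with the identity-fiber comparison and the associativity (chain rule) for induced maps, subject to the compatibility axioms of~\cite[Section~1]{blob}. In the present situation the fiber functor sends an object $h:S\to T$ of $\sfD(\ttO)=\coprod_{c\in\ttO}\ttO/c$ to $r(S)$, and a morphism of $\sfD(\ttO)$ — which is a commuting triangle over a fixed base — to the value of $r$ on its top edge; so functoriality of $\fifu$ is immediate from functoriality of $r$, with no idempotency needed. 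The content is therefore concentrated in the two remaining axioms.

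Next I would treat the \emph{identity axiom}. The fiber of $\id_S:S\to S$ is $r(S)$, and the axiom of~\cite{blob} demands that iterating the fiber construction on an identity stabilises appropriately; concretely, the fiber of the identity of the fiber, $\id_{r(S)}:r(S)\to r(S)$, is $r\big(r(S)\big)=r^2(S)$, and this must coincide (as the chosen fiber) with $r(S)$ itself. This is precisely the object-level equation $r^2=r$, and conversely, once $r^2=r$ holds, the identity axiom is satisfied. Then I would treat the \emph{associativity of induced maps}, i.e.\ perfectness together with the chain rule: given $S\xrightarrow{h}T\xrightarrow{g}R$, the diagram~(\ref{Pujdeme s Jarkou na demonstraci?}) gives $h_R=r(h):r(S)\to r(T)$, and the induced map between the fibers of $gh$ and of the identity-type comparison must compose correctly. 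Because the fiber of any map out of $S$ is $r(S)$ regardless of the target, the two induced maps one can form from the chain are $r(h)$ and then a further application of $r$ to that — so the composite of induced morphisms is $r\big(r(h)\big)\circ(\text{stuff})$, and this agrees with the single induced morphism $r(gh)$ exactly when $r$ is idempotent (using $r(g)r(h)=r(gh)$ from functoriality). So again the axiom reduces to $r^2=r$.

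For the converse direction, assuming $\ttO$ with this assignment is a unary operadic category, I would simply read off the identity axiom applied to $\id_S$: it forces the chosen fiber of $\id_{r(S)}$, namely $r(r(S))$, to equal $r(S)$ for every $S$, and applied to a generic morphism $h$ it forces $r(r(h))=r(h)$, giving $r^2=r$ on morphisms too; hence $r^2=r$ as an endofunctor. The main obstacle I anticipate is bookkeeping rather than depth: one must be careful about \emph{which} of the several equivalent formulations of the unary axioms in~\cite{blob} is being used, and make sure the fiber functor is genuinely well defined on all of $\sfD(\ttO)$ — in particular that the induced map $h_R:=r(h)$ is the correct morphism in the base-change squares, and that no hidden use is made of local terminal objects (which we have dropped in the nonunital setting). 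Once the correspondence ``fiber of a fiber $\leftrightarrow$ double application of $r$'' is pinned down, every axiom collapses to $r^2=r$, and the proposition follows.
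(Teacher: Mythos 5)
Your proposal is correct in substance, but it takes exactly the route that the paper mentions and then deliberately declines: a direct verification of the axioms of \cite{blob} for the fiber functor $\fifu=rd_\ttO$. The paper instead packages all of the axioms into the single algebra equation $\fifu\,\sfD(\fifu)=\fifu\mu$ for the nonunital d\'ecalage monad $(\sfD,\mu)$ on $\Cat$, rewrites its left-hand side by naturality of $d$ and functoriality of $\sfD$ to get $r^2d_\ttO\sfD(d_\ttO)=rd_\ttO\sfD(d_\ttO)$, and concludes $r^2=r$ from the surjectivity of $d_\ttO\sfD(d_\ttO)$ on objects and morphisms. What the monadic argument buys is precisely the bookkeeping you worry about at the end: one never has to choose a formulation of the unary axioms, and the object-level and morphism-level conditions are handled in one stroke. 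What your argument buys is transparency about \emph{where} idempotency enters: as you correctly note, functoriality of $\fifu$ costs nothing, and the entire content sits in the chain-rule axiom ``the fiber of the induced map equals the fiber of the original map'' (the unary case of \cite[equation~(4)]{blob}, i.e.\ the object part of the algebra equation), whose specialization to $g=h=\id_S$ gives $r^2(S)=r(S)$ and whose morphism-level companion gives $r(r(f))=r(f)$, as you record in the converse direction.

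One step in your middle paragraph is wrong, although it does not damage the conclusion. You claim that the composite of induced morphisms agrees with the single induced morphism $r(gh)$ ``exactly when $r$ is idempotent.'' In fact $r(g)\circ r(h)=r(gh)$ holds for \emph{any} functor $r$, so the compatibility of induced maps with composition is automatic and detects nothing; no axiom of an operadic category reduces to that identity. The idempotency is forced solely by the axiom you already exploit in the identity paragraph, namely that the fiber of the induced map $h_R=r(h):r(S)\to r(T)$, which is $r^2(S)$, must equal the fiber of $h$, which is $r(S)$, together with its analogue on morphisms. With the ``associativity of induced maps'' paragraph deleted (or replaced by the observation that it is vacuous), your proof is complete.
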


\begin{proof}
The proposition can be proved by checking the axioms
of an operadic category for the fiber functor $\fifu:= rd_\ttO :
\sfD(\ttO) \to \ttO$, with $d_\ttO : \sfD(\ttO) \to \ttO$ the domain
functor, but we offer a more elegant approach.
The d\'ecalage assembles to an endofunctor $\sfD : \Cat \to \Cat$
on the category of small categories, and  the domain functor 
$d_\ttO : \sfD(\ttO) \to \ttO$ gives rise to a
natural transformation
$d: \sfD \rightsquigarrow \id_\Cat$. By~\cite[Corollary~7]{blob}, the
  pair $(\sfD,\mu)$ with $\mu : = \sfD(d) : \sfD^2 \rightsquigarrow
  \sfD$ is a nonunital monad in $\Cat$. Proposition~8
loc.\ cit.\ then states that the algebras for this monad are unary
(nonunital) operadic categories. 

A functor $\fifu : \sfD(\ttO) \to \ttO$ is thus the fiber functor of a unary
operadic category if it is an algebra for the nonunital monad $(\sfD,\mu)$,
i.e.\ if $\fifu\sfD(\fifu) = \fifu \mu$. In the particular case with $\mu =
\sfD(d)$ and  $\fifu= rd_\ttO$ this leads to
\begin{equation}
\label{pisu v Berline}
rd_\ttO\sfD(rd_\ttO) = rd_\ttO \sfD(d_\ttO).
\end{equation}
We notice that $rd_\ttO = d_\ttO \sfD(r)$ since $d$ is a natural
transformation, so the left hand side of~(\ref{pisu v Berline}) equals $d_\ttO\sfD(r)\sfD(rd_\ttO)$
which in turn equals $d_\ttO\sfD(r^2)\sfD(rd_\ttO)$ by the functoriality of
$\sfD$.  Using the naturality of $d$ again, we eventually rewrite the left hand
side to $r^2 d_\ttO D(d_\ttO)$ and 
conclude that~(\ref{pisu v Berline}) is equivalent to
\[
r^2 d_\ttO D(d_\ttO) = rd_\ttO \sfD(d_\ttO).
\]
Since the functor $d_\ttO \sfD(d_A) : \sfD^2(\ttO) \to \ttO$ is 
surjective on both the
objects and the morphisms of $\ttO$, 
the latter equation is satisfied if and only if
$r^2 = r$.
\end{proof}

\begin{example}
\label{Dnes bychom meli jet na vylet do Postupimi.}
Each category $\ttO$ admits the {\em tautological\/} unary operadic
category structure given by choosing, in Proposition~\ref{Na vikend se
  pocasi zhorsi.},  $r$ to be the identity endofunctor
$\id_\ttO : \ttO \to \ttO$. The result is the rule $S  \ \fib
 S  \xrightarrow h T$, for each morphism of $\ttO$. 
\end{example}

\begin{example}
\label{Za chvili na vlak.}
In the presence of a collection~\eqref{Jsem posledni den v Cuernavace.}
we have an important idempotent functor featured
in Proposition~\ref{Na vikend se pocasi zhorsi.} defined by
$r(S) := U_c$,
where~$U_c$ is the chosen object in the connected component $\pi_0(S)$ of
$S \in \ttO$.
The rule $U_c\ \fib S \stackrel h\to T$ thus makes $\ttO$  
a~unary operadic category.
\end{example}

\begin{proposition}
\label{Zitra jedeme na vylet do Postupimi.}
The category $\ttO$ with the operadic structure given by an idempotent
$r:  \ttO \to \ttO$ is left  unital, cf.~ Definition~\ref{Opet cele
  prepisuji.}, 
if and only $r :\ttO \to \ttO$ is the
identity endofunctor. It is right unital if and only if 
the image of $r :\ttO \to \ttO$ consists of the chosen local
terminals  in~(\ref{Jsem posledni den v Cuernavace.}).
Therefore $\ttO$ is unital if and only if it is discrete,
i.e.\ a small set.
\end{proposition}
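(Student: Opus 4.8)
The plan is to read the three conditions of Definition~\ref{Opet cele prepisuji.} through the explicit fiber structure supplied by Proposition~\ref{Na vikend se pocasi zhorsi.}. Recall from the latter that in the operadic structure attached to an idempotent $r:\ttO\to\ttO$ the sole fiber of a morphism $h:S\to T$ is $r(S)$, and that the induced map between the fibers of a composable pair $S\xrightarrow{h}T\xrightarrow{g}R$ is $r(h):r(S)\to r(T)$; in particular the fiber of $\id_S$ is $r(S)$, and the fiber of any $g:S\to U_c$ is again $r(S)$. I shall also use that being equipped with a family~\eqref{Jsem posledni den v Cuernavace.} of chosen local terminal objects forces each $U_c$ to lie in the component $c$ and to be terminal there, so that every $S$ with $\pi_0(S)=c$ carries a unique morphism $!_S:S\to U_c$; and that a local terminal is its own fiber under its identity, which here reads $r(U_c)=U_c$, so $r$ fixes every member of the family.

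Granting these translations, each half of unitality collapses to a one-line constraint on $r$. The half asserting that the fiber functor agrees with the domain functor on each slice $\ttO/U_c$ says precisely that $r(S)=S$ whenever $S$ admits a morphism to some $U_c$; since each $U_c$ is terminal in its component, this is the single equation $r=\id_\ttO$. The half asserting that the fiber of every identity $\id_S$ lies in the chosen family says $r(S)\in\{U_c\mid c\in\pi_0(\ttO)\}$ for all $S$; applying $r$ to the terminal map $!_S$ produces a morphism $r(S)\to U_{\pi_0(S)}$, and since distinct members of the family sit in distinct components this forces $r(S)=U_{\pi_0(S)}$, i.e.\ (using that $r$ fixes the $U_c$) $\Im(r)=\{U_c\mid c\in\pi_0(\ttO)\}$. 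These are exactly the two displayed equivalences, matched to ``left'' resp.\ ``right'' unitality by reading off Definition~\ref{Opet cele prepisuji.}.

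For the last assertion I would simply intersect the two conditions. If $\ttO$ is unital then $r=\id_\ttO$ and simultaneously $\Im(r)=\{U_c\mid c\in\pi_0(\ttO)\}$; but $\Im(\id_\ttO)$ is the whole object class of $\ttO$, so every object is one of the chosen local terminals. Hence each connected component contains exactly one object, and that object, being terminal in its one-object component, admits no endomorphism other than the identity, so $\ttO$ is discrete. Conversely, a discrete $\ttO$ has singleton components, the only idempotent endofunctor for which the ``fibers of identities'' can be local terminals of cardinality~$[1]$ is $r=\id_\ttO$, and with $\{U_c\}$ taken to be the entire object class both conditions hold trivially, so $\ttO$ is unital. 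The step I expect to be the actual work is the bookkeeping hidden in the first paragraph: one must be precise about what ``local terminal object'' entails inside this particular operadic structure---terminality in the component together with the self-fiber identity $r(U_c)=U_c$---since it is exactly this package that excludes the otherwise innocuous idempotents retracting a multi-object component onto its terminal object, and the clean form of both equivalences rests on it.
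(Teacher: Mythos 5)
The paper offers no proof of this proposition (it is explicitly ``left as an exercise''), so there is nothing to compare against except the statement itself. Your computations are correct and are surely the intended argument: under the structure of Proposition~\ref{Na vikend se pocasi zhorsi.} the fiber of $\id_S$ is $r(S)$ and the fiber of any $S\to U_c$ is also $r(S)$, so the fiber-of-identity condition amounts to $\Im(r)\subseteq\{U_c\}$, the domain-functor condition on the slices $\ttO/U_c$ amounts to $r=\id_\ttO$ (on morphisms as well as objects, since every $S$ maps to $U_{\pi_0(S)}$), and the conjunction forces every object to be a chosen local terminal, one per component with only its identity endomorphism, i.e.\ discreteness; the converse for discrete $\ttO$ is immediate.

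There is, however, one step that does not withstand scrutiny: the claim that your two equivalences are ``matched to left resp.\ right unitality by reading off Definition~\ref{Opet cele prepisuji.}.'' Reading off that definition gives the \emph{opposite} matching: left unitality is there defined as the fiber-of-identity condition, which by your own computation is $\Im(r)\subseteq\{U_c\}$, while right unitality is the domain-functor condition, i.e.\ $r=\id_\ttO$. So what you actually prove is the proposition with ``left'' and ``right'' interchanged. (The proposition and the remark following it are indeed inconsistent with Definition~\ref{Opet cele prepisuji.} as written --- the tautological structure of Example~\ref{Dnes bychom meli jet na vylet do Postupimi.} satisfies the domain-functor condition and hence is \emph{right} unital per the definition, yet the text calls it the left unital one. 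You have silently adopted the proposition's labels while asserting that they follow from the definition; the discrepancy should be flagged, not papered over.) A second, minor, gap: you justify $r(U_c)=U_c$ by saying a local terminal ``is its own fiber under its identity,'' but in the nonunital setting of this paper that is not an axiom --- it is precisely part of what unitality would assert --- so it cannot be invoked unconditionally; it is only needed to upgrade $\Im(r)\subseteq\{U_c\}$ to an equality and can be dispensed with by reading ``consists of the chosen local terminals'' as containment in the family.
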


We leave the proof of Proposition~\ref{Zitra jedeme na vylet do
  Postupimi.} as an exercise.
Thus the only left unital unary operadic categories produced by
Proposition~\ref{Na vikend se pocasi zhorsi.} are the tautological
ones in Example~\ref{Dnes bychom meli jet na vylet do Postupimi.}, 
and the only right unital are those in
Example~\ref{Za chvili na vlak.}.

\begin{example}
\label{Dominik mel skvelou myslenku.}
Each unary operadic category with the structure given by an
idempotent $r$ in Proposition~\ref{Na vikend se pocasi zhorsi.} is
perfect, with $\ttO_F$ the image of $r$. Indeed, the maps between the
fibers are of the form $r(h)$ for a morphisms $h$ of $\ttO$. Since
\[
r(g)r(h) = r^2(g)r^2(h) = r\big(r(g)r(h)\big),
\]
the subclass of morphisms of that form is closed under composition.
Another example of a~perfect unary operadic category is the tautological operadic
category $\Tau(\ttA)$ generated by $\ttA$,
cf.~\cite[Definition~9]{blob}, 
with $\Tau(\ttA)_F$ the d\'ecalage $\sfD(\ttA)$ of the
category~$\ttA$.
 \end{example}

\subsection{Unary dioperadic categories -- definition and examples}
\label{Dnes na vylete v te umelecke vesnicce.}
We will consider triples $\ttQ,\ttQ_L,\ttQ_R$ consisting of a unary
operadic category $\ttQ_L$ with the underlying category $\ttQ$,
together with a unary operadic category~$\ttQ_R$ with the underlying
category the  opposite category $\ttQ^{\dagger}$. We will express this situation
by writing  $\ttQ = (\ttQ_L, \ttQ_R)$.
A morphism $h : S \to T$  of $\ttQ$ 
has thus its {\/\em fiber\/}, say $F$,  in $\ttQ_L$ and its {\/\em
  cofiber\/}, say $D$, which is the fiber of the map $h^\dagger : T^\dagger
\to S^\dagger$ in $\ttQ_R$. We will abbreviate
this~by
\begin{equation}
\label{Pocasi nic moc ale naletal jsem 3 hodiny.}
F\ \fib S \stackrel h \longrightarrow T \cof \ D.
\end{equation}

\begin{definition}
\label{Musim se objednat na endokrinologii za mesic.}
We say that $\ttQ = (\ttQ_L,\ttQ_R)$ as above is {\/\em left dioperadic\/}
if $\ttQ_L$ is perfect. In this case we
denote by $\ttQ_F$ the fiber subcategory of $\ttQ_L$. 
Dually, $\ttQ$ is {\/\em right dioperadic\/} if $\ttQ_R$ is perfect,
with~$\ttQ_C$ denoting the fiber subcategory of $\ttQ_R$.
Finally, $\ttQ$ is {\/\em dioperadic\/} if it is both 
left and right dioperadic.
\end{definition}

\noindent 
{\bf Convention.}
The rest of this and the following section
will concentrate on the unary case, thus all
dioperadic categories and their variants will be unary without
mentioning it.

\begin{example}
\label{Snehulacek se tesi na snih.}
In~Example~1.22 of~\cite{duodel} we discussed a unary
operadic structure on the subcategory  ${\tt Epi(A)}$ of epimorphisms
of an Abelian category $\ttA$, with the kernels of maps as the fibers.  
Operads over ${\tt Epi(A)}$ were the
 (absolute) Kapranov's charades~\cite[Definition~3.2]{kapranov:langlands}.
In the present example we consider the whole $\ttA$, with the fiber-cofiber 
structure  $\ttA = (\ttA_L,\ttA_R)$ given by the scheme
\[
\Ker(a) \ \fib X \stackrel a\longrightarrow Y \cof \ \Cok(a) 
\]
where ${\rm Ker(a)}$ resp.~$\Cok(a)$ is a chosen kernel resp.~cokernel
of a morphism $a$ of $\ttA$. Let us verify that the above definition of fibers 
is functorial. To this end, consider the diagram
\[
\xymatrix@C=2.5em@R=3em{
F'' \komp \   \ar@/_.8em/@{-->}[d]_\alpha 
\ar@{-->}@{^{(}->}[r]^(.6){\kappa''}& X \komp
\ar@{^{(}->}[d]_x\ar@{-->}[rr]^{h_Z}  & &Y   \komp
\ar@{^{(}->}[d]_y
\\
F'  \ar@{-->}[ur]^(.6)s \ar@/_.8em/@{-->}[u]^\beta \ 
\ar@{^{(}->}[r]^(.6){\kappa'}& S
\ar@/_1.5em/[dr]^{gh} \ar[rr]^{h} && T\ar@/^1.5em/[dl]_g
\\
&&Z&
}
\]
in which $X \stackrel x \hookrightarrow S$ resp.~$Y \stackrel y
\hookrightarrow T$ is the kernel of $gh$ resp.~the kernel of $g$. 
Since $g(hx) = (gh)x = 0$ and since $y$ is an equalizer of $g$ and the
null morphism, i.e.\ $y = \eq(g,0)$, there exists a unique
$h_Z : X \to Y$ making the upper rectangle of the diagram commutative.
The functoriality follows from
the uniqueness of this construction.

Let us compare the domain $F'$ of the kernel $\kappa'$ of $h$ to 
the domain $F''$ of the kernel $\kappa''$ of~$h_Z$.
Since $h(x\kappa'') = y(h_Z \kappa'') = 0$ and since $\kappa' = \eq(h,0)$,
there exist a unique
$\alpha : F'' \to F'$ making the left square commutative. 
Since $gh(\kappa') = g(h\kappa') = 0$ and since $x = \eq(gh,0)$, there
exists a~diagonal map $s : F' \to X$ such that the lower right
triangle in the left square commutes. Finally, $y h_Z s =  (hx)s = \kappa' h =
0$ and, thus, $h_Z s = 0$ since $y$, as a kernel, is a
monomorphism. So there exists $\beta : F ' \to F''$ making the upper
left triangle commutative. 
Let us prove that $\alpha$ and $\beta$ are mutual inverses. 
By the standard diagram chase
\[
\kappa' \alpha\beta = x \kappa'' \beta = sx = \kappa' \
\hbox { and } 
\ x\kappa'' \beta\alpha = xs\alpha = \kappa' \alpha = x\kappa''.
\]
Since both $\kappa'$ and $x\kappa''$ are monomorphisms, 
$\alpha\beta = \id_{F'}$ and $\beta\alpha =  \id_{F''}$ as claimed.
The cofiber side can be attended to by dualizing the above steps.

Notice that we did not actually prove that $F' = F''$ as required by
Axiom~(iv) of an operadic category, we only constructed a natural
and canonical isomorphism $F' \cong F''$. This indicates that this example must be
taken {\em cum grano salis\/}. However, in some concrete
cases when $\ttA$ is concrete (pardon the pun) and the kernels are
actual set-theoretic preimages of $0$, we indeed have $F'
= F''$ as~required. 
  
If $\ttA$ is the category of
finite dimensional vector spaces, which is the only case relevant to
our applications, we can take, as the cokernel of a map $h: X \to Y$, 
the space $\big(\Ker(h^*)\big)^*$, i.e.\ the linear dual of the
set-theoretic kernel of the dual map $h^*: Y^* \to X^*$. With this choice, the
canonical isomorphisms became equalities also on the cofiber side.

It is easy to see that $\ttA_L$ is unital, with
the collection of units~\eqref{Jsem posledni den v Cuernavace.} formed by
the null object $0$, so $\ttA_F = \ttA_L$,
cf.~Example~\ref{Dominik mel skvelou myslenku.}. Dually, $\ttA_R =
\ttA_C$, so both $\ttA_L$ and $\ttA_R$ are perfect, and $\ttA =
(\ttA_L,\ttA_R)$ is dioperadic. 
The fiber-cofiber structure of $\ttA$ has however a very specific,
subtle additional property which will be the topics of
Subsection~\ref{Jsem pred-predposledni den v Haife.}. We
postpone further discussion of $\ttA$ to that subsection. 
\end{example}

\begin{example}
\label{Vecer odjizdim na oslavu Silvestra, zitra mam letani.}
Let $\Setp$ be the  category of small pointed sets. Define  
the fiber of a map $S \Xarrow h T$ as the preimage of the base point
of $T$, and its cofiber as the complement 
of $\Im(h)$ in $T$ with the formally  attached
base point $\circ$, that is
\[
F :=   \big\{s \in S \ | \ h(s) = \circ\big\} 
\ \fib S \ \Xarrow h \ T \cof \ (T \setminus \Im(h)) \sqcup\{ \circ\} =: D.
\]
The induced map  $h_R : F  \to G$ between the fibers 
in the diagram of pointed sets
\[
\xymatrix@R=-.3em@C=.8em{
F \ar@{-->}[rr]^(.55){h_R}   
&&G \hskip -.5em
\\
\hskip -2em
\hskip 1.9em\raisebox{.7em}{{\rotatebox{270}{$\fib$}}} && 
\hskip -.1em\raisebox{.7em}{{\rotatebox{270}{$\fib$}}} \hskip -.5em
\\
S \ar[rr]^{h}  \ar@/_.9em/[ddddddddr]_(.4){gh}   &&
T \ar@/^.9em/[ddddddddl]^(.4){g} \hskip -.5em
\\&& 
\\&& 
\\&& 
\\&&
\\ &&
\\
\\
\\
&R &
} 
\] 
is the restriction of $h$ to  $F = \big\{s \in S \ | \ gh(s) = \circ
\big\}$.
The induced map
\[
h^R : 
H =  (T \setminus \Im(g)) \sqcup \{\circ\} \ \longrightarrow
D = (S \setminus \Im(hg)) \sqcup \{\circ\} 
\] 
between the cofibers in the diagram
\[
\xymatrix@R=-.3em@C=.8em{D
  \ar@{<--}[rr]^(.55){h^R}   
&&H \hskip -.5em
\\
\hskip -2em
\hskip 1.9em\raisebox{.7em}{{\rotatebox{270}{$\fib$}}} && 
\hskip -.1em\raisebox{.7em}{{\rotatebox{270}{$\fib$}}} \hskip -.5em
\\
T \ar@{<-}[rr]^{h}  \ar@{<-}@/_.9em/[ddddddddr]_(.4){hg}   &&
S \ar@{<-}@/^.9em/[ddddddddl]^(.4){g} \hskip -.5em
\\&& 
\\&& 
\\&& 
\\&&
\\ &&
\\
\\
\\
&R &
} 
\] 
is given by
\[
h^R(s) := 
\begin{cases}
h(s),\ \hbox { if } h(s) \not\in \Im(hg), \hbox { and}
\\
\circ,\    \hbox { if } h(s) \in \Im(hg).
\end{cases}
\]
The dioperadicity of $\Setp = (\Setp_L,\Setp_R)$ follows, as in
Example~\ref{Snehulacek se tesi na snih.}, from the
(co)unitality of $\Setp_L$ resp.~$\Setp_R$.
\end{example}

\begin{example}
\label{Jarka nemocna jako kazdou sudou sobotu.}
Let $\ttC$ be a category and $\Arr$ the category of arrows in
$\ttC$. Objects of $\Arr$ are morphisms of $\ttC$  and
morphisms  of $\Arr$  are commutative squares in $\ttC$. 
We can make $\Arr$ a~dioperadic category by postulating that the fiber
resp.~the cofiber of a morphism $S \to T$ in $\Arr$ given by the pair
$(F,D)$ of morphisms of $\ttC$
as in:
\begin{equation}
\label{tahadlo}
\xymatrix@R=.7em@C=.2em{
a \ar[rrrr]^F\ar[dd]_S&&&&c\ar[dd]^T
\\
&&&&
\\
b \ar[rrrr]^D&&&&d
}
\end{equation}
is $F$ resp.~$D$ interpreted as objects of $\Arr$, that is
\[
\xymatrix@R=.2em@C=.2em{
a\ar[dd]_F&&a \ar[rrrr]^F\ar[dd]_S&&&&c\ar[dd]^T&&b\ar[dd]^{\ D~.}
\\
& \fib &&&&&& \cof &
\\
c&&b \ar[rrrr]^D&&&&d&&d
}
\]
A simple calculation reveals that the fiber subcategory $\Arr_F$ 
consists of morphisms~\eqref{tahadlo} with $D$ the identity map.  
In other words, $\Arr_F$ is the d\'ecalage $\sfD(\ttC) =
\coprod_{c \in \ttC}
\ttC/c$, with the fiber diagram
\[
\xymatrix@R=.2em@C=.2em{
a\ar[dd]_F&&a \ar[rrrr]^F\ar[ddrr]_S&&&&\ b~.\ar[ddll]^T&
\\
& \fib &&&&
\\
b&&&&c&&
}
\]
Similarly,
$\Arr_C = \coprod_{c \in \ttC} c/\ttC$. We see that here both 
$\ttQ_F \subset \ttQ_L$ and $\ttQ_C \subset \ttQ_R$ are proper subcategories.  
\end{example}

\begin{example}
Take in the previous example as $\ttC$ the chaotic category
$\Cha(\frakC)$ on a set $\frakC$ of `colors.' The associated category of arrows
will be the chaotic
category generated by the symbols $\bez ab$ with $a,b \in \frakC$, 
and the fiber-cofiber diagrams 
\[
\bez a{\rule {0pt}{.8em}c} \fib \bez ab \longrightarrow \bez cd \cof \bez bd.
\]
It is the unary version of the category of $\frakC$-bibouquets in 
Example~\ref{Zalije mi Jarka jeste jednou kyticky?}.
\end{example}

\subsection{Operads, cooperads and bimodules}
\label{Nemam se cim chlubit.}
The main definition of this subsection involves a non-unital
$\ttQ_F$-operad $\oP = \{\oP(T)\}_{T \in \ttQ_F}$ and  a non-counital
$\ttQ_C$-cooperad $\cooP =
\{\cooP(T)\}_{T \in \ttQ_C}$ with the (de)compositions
\begin{equation}
\label{Jarka zase blbne.}
\gamma_h:   \oP(T) \ot \oP(F) \longrightarrow \oP(S)
\ \hbox { and } \ \delta_h : \cooP(T) \longrightarrow \cooP(D) \ot \cooP(S)
\end{equation}
given for any $h : S \to T$ in~(\ref{Pocasi nic moc ale naletal
  jsem 3 hodiny.}) belonging to $\ttQ_F$ in the operad case, resp.~to
$\ttQ_C$ in the cooperad case.
An explicit definition of non-unital operads over unary
operadic categories can be found in~\cite[Section~1]{blob}, axioms of
cooperads are the formal duals of the operad
axioms. While operads and cooperads use only a part of the
structure of $\ttQ$, namely the subcategories $\ttQ_F$ and $\ttQ_C$, 
the structure introduced
below uses the entire dioperad structure.

\begin{definition}
\label{Musim se objednat na ocni.}
Let $\ttQ = (\ttQ_L, \ttQ_R)$ be a dioperadic category, $\oL$
a $\ttQ_F$-operad and $\cooP$ a $\ttQ_C$-cooperad.
A {\/\em {\CP}-bimodule\/} is a collection $\oM = \{\oM(S)\}_{S
  \in \ttQ}$ of objects of $\ttV$ with an action
\begin{subequations}
\begin{equation}
\label{Dva dny za sebou!}
\omega_h : \oM(T) \ot \oL(F) \longrightarrow \cooP(D)  \ot \oM(S)
\end{equation}
defined for each $h$ as in~(\ref{Pocasi nic moc ale naletal jsem 3
  hodiny.}). We moreover require a compatibility between this action
and the (co)operad structures of $\oL$ and $\cooP$. Namely we require, for
each diagram
\begin{equation}
\label{Pujdeme spolu na demonstraci?}
\psscalebox{1.0 1.0} 
{
\begin{pspicture}(0,1)(5.943182,1.8)
\rput(2.142857,1.2418182){$F$}
\rput(2.6,1.2418182){$\fib$}
\rput(2.9714286,1.2418182){$X$}
\rput(5.0363636,1.2418182){$Y$}
\rput(2.9636364,0.74){$\raisebox{.7em}{{\rotatebox{270}{$\fib$}}}$}
\rput(5,0.74){$\raisebox{.7em}{{\rotatebox{270}{$\fib$}}}$}
\rput(2.1818182,0.6881818){$\raisebox{.3em}{{\rotatebox{90}{$=$}}}$}
\rput(2.142857,0.28181817){$F$}
\rput(2.6,0.28181817){$\fib$}
\rput(2.9714286,0.28181817){$S$}
\rput(5.0363636,0.28181817){$T$}
\psline[linecolor=black, linewidth=0.02, arrowsize=0.05291667cm 2.0,arrowlength=1.4,arrowinset=0.0]{->}(3.1818182,1.2418182)(4.818182,1.2818182)
\psline[linecolor=black, linewidth=0.02, arrowsize=0.05291667cm 2.0,arrowlength=1.4,arrowinset=0.0]{->}(3.1818182,0.24181817)(4.818182,0.24181817)
\rput(5.3636365,0.28181817){$\cof$}
\rput(5.818182,0.28181817){$D$}
\rput(4.0,-2.030909){$Z$}
\pscustom[linecolor=black, linewidth=.04]
{
\newpath
\moveto(7.090909,2.06)
}
\pscustom[linecolor=black, linewidth=.04]
{
\newpath
\moveto(7.818182,1.2418182)
}
\pscustom[linecolor=black, linewidth=0.02]
{
\newpath
\moveto(7.4545455,1.06)
}
\pscustom[linecolor=black, linewidth=0.02]
{
\newpath
\moveto(7.090909,0.6963636)
}
\pscustom[linecolor=black, linewidth=0.02]
{
\newpath
\moveto(7.272727,2.5145454)
}

\pscustom[linecolor=black, linewidth=0.02]
{
\newpath
\moveto(5.0,0.059999943)
\lineto(4.772727,-0.6672727)
\curveto(4.6590905,-1.0309088)(4.4772725,-1.4854544)(4.2727275,-1.7581818)
}
\psline[linecolor=black, linewidth=0.02, arrowsize=0.05291667cm 2.0,arrowlength=1.4,arrowinset=0.0]{->}(4.2727275,-1.7581818)(4.181818,-1.8490909)
\pscustom[linecolor=black, linewidth=0.02]
{
\newpath
\moveto(2.9818182,0.059999943)
\lineto(3.209091,-0.6672727)
\curveto(3.322727,-1.0309088)(3.5045452,-1.4854544)(3.709091,-1.7581818)
}
\psline[linecolor=black, linewidth=0.02, arrowsize=0.05291667cm 2.0,arrowlength=1.4,arrowinset=0.0]{->}(3.709091,-1.7581818)(3.8,-1.8490909)
\rput(3.3818183,-2.5581818){$H$}
\rput(4.6181817,-2.5581818){$R$}
\rput(3.7363637,-2.4036363){$\raisebox{.7em}{{\rotatebox{38}{$\fib$}}}$}
\rput(4.3,-2.94){$\raisebox{.7em}{{\rotatebox{38}{$\fib$}}}$}
\rput(4.2636365,-2.3036363){$\raisebox{.7em}{{\rotatebox{150}{$\fib$}}}$}
\pscustom[linecolor=black, linewidth=0.02]
{
\newpath
\moveto(5.8,0.059999943)
\lineto(5.517172,-0.89584416)
\curveto(5.375757,-1.3737662)(5.1494946,-1.9711689)(4.8949494,-2.3296103)
}
\psline[linecolor=black, linewidth=0.02, arrowsize=0.05291667cm 2.0,arrowlength=1.4,arrowinset=0.0]{->}(4.8949494,-2.3296103)(4.7818184,-2.449091)
\rput(4.0,-3.121818){$H$}
\rput(3.7454545,-2.9){$\raisebox{.7em}{{\rotatebox{135}{$=$}}}$}
\rput[b](3.909091,1.4236363){\scriptsize $h_Z$}
\rput[b](4.0,0.42363638){\scriptsize $h$}
\rput[l](4.818182,-1.1218182){\scriptsize $g$}
\rput[bl](5.5454545,-1.3945454){\scriptsize $g^S$}
\rput(2.9181818,-1.1218182){\scriptsize $gh$}
\end{pspicture}
}
\raisebox{-13em}{\rule{0pt}{0pt}}
\end{equation}
in which $h_Z$ resp.~$g^S$ are the induced maps between the (co)fibers,
and where the equalities $F=F$ resp.~$H=H$ follow from the axioms of
the operadic categories $\ttQ_L$
resp.~$\ttQ_R$, the commutativity~of
\begin{equation}
\label{Uz zase jezdim do Kolina na kole.}
\xymatrix@C=5em{\oM(Z) \ot \oP(Y) \ot \oL(F)   \ar[dd]_{\omega_g \ot
    \id} 
\ar[r]^{\id \ot \gamma_{h_Z}}
& \oM(Z) \ot \oP(X) \ar[d]^{\omega_{gh}}
\\
&\cooP(R) \ot \oM(S) \ar[d]^{\delta_{g^S} \ot \id}
\\
\cooP(H) \ot \oM(T) \ot \oP(F)  \ar[r]^{\id \ot \omega_h}
&\cooP(H) \ot \cooP(D) \ot \oM(S).
}
\end{equation}
\end{subequations}
\end{definition}
Notice that all objects and maps in~(\ref{Uz zase jezdim do Kolina na
  kole.}) are defined, since $F\ \fib X \Xarrow {h_Z} Y$ is a scheme
in $\ttQ_F$ and $D \Xarrow {g^s} R \cof \ H$ a scheme in $\ttQ_C$.

\begin{remark}
\label{Poletim do Izraele?}
\begin{subequations}
Assume that the base monoidal category $\ttV$ is the category of
vector spaces. 
The linear dual 
of the $\ttQ_C$-cooperad $\cooP$ in Definition~\ref{Musim
  se objednat na ocni.} 
is a $\ttQ_C$-operad $\oR$
with the structure operations $\rho_h :\oR(S)\ot \oR(D) \to \oR(T)$
for $h:S \to T$ as in~\eqref{Pocasi nic moc ale naletal jsem 3 hodiny.}.
The action~\eqref{Dva dny za sebou!} induces an action
\begin{equation}
\label{Koupil jsem si dve zpetna zrcatka.}
\varpi_h : \oR(D) \ot \oM(T)\ot \oP(F) \longrightarrow \oM(S)
\end{equation}  
and the commutativity of~(\ref{Uz zase jezdim do Kolina na kole.})
implies the commutativity of
\begin{equation}
\label{Pred 4 lety jsme byli v Parizi.}
\xymatrix@C=5em@R=3.5em{\oR(D) \ot \oR(H) \ot \oM(Z) \ot \oP(Y) \ot \oP(F)
\ar[r]^(.6){\rho_{\! g^S} \ot \id \ot \gamma_{h_Z}} \ar[d]_{\id\ot \varpi_g
\ot \id}
&
\oR(R) \ot \oM(Z) \ot \oP(X) \ar[d]^{\varpi_{gh}} 
\\
\oR(D) \ot  \oM(T) \ot \oP(F)
\ar[r]^{\varpi_h} & \ \oM(S).
}
\end{equation}
For
$\phi \in \oL(F)$, $\psi \in \oL(Y)$, $\zeta \in \oM(Z)$, $\eta
\in \oR(H)$ and  $\delta
\in \oR(D)$ the commutativity of~(\ref{Pred 4 lety jsme byli v
  Parizi.}) means
\begin{equation}
\label{Dnes je koncert ale nechce se mi do deste.}
\varpi_{gh}\left(\rho_{g^S}(\delta,\eta),\zeta,\gamma_{h_Z}(\psi,\phi)\right)
= \varpi_h\left(\delta,\varpi_g(\eta,\zeta,\psi),\phi\right).
\end{equation}
\end{subequations}
\end{remark}

One may wonder why we did not define bimodules using  more
conventional action~\eqref{Koupil jsem si dve zpetna zrcatka.} that
avoids the use of cooperads.
One of the reasons was that the linear dual of an 
operad need not be a cooperad, and that the 
action~\eqref{Koupil jsem si dve zpetna zrcatka.} need not induce an
action~\eqref{Dva dny za sebou!} unless $\oR$ satisfies appropriate
finitarity assumptions. Our approach is therefore more general.
The main reason for out choice was however 
the manifest self-duality of Definition~\ref{Musim se objednat na
  ocni.}.

\begin{example}
\label{V Cechach je plno snehu a ja trcim v Izraeli.}
Let $\ttV$ be, as in Remark~\ref{Poletim do Izraele?},  
the category of vector spaces.
For the terminal one-object, one-morphism dioperadic category $\odot$,
Definition~\ref{Musim se objednat na ocni.} leads to an associative
algebra $R$, a~coassociative coalgebra $C$, a vector
space $M$, and a linear map
\begin{equation}
\label{Pracuji na prubezne zprave.}
\omega : M \ot R \longrightarrow  C \ot M. 
\end{equation}
The compatibility of $\omega$ 
with the associative multiplication $\mu : R \ot R  \to R$ and the coassociative
comultiplication $\delta : C \to C \ot C$ means the
commutativity of the  diagram
\[
\xymatrix@C=5em{M \ot R \ot R   \ar[dd]_{\omega \ot
    \id} 
\ar[r]^{\id \ot \mu}
& M \ot R \ar[d]^{\omega}
\\
&C \ot M \ar[d]^{\delta \ot \id}
\\
C \ot M \ot R  \ar[r]^{\id \ot \omega}
&C \ot C \ot \ M.
}
\]

In the disguise of Remark~\ref{Poletim do Izraele?}
this structure appears as
an $L$-$R$-bimodule $M$ for associative algebras $L$ and
$R$, with $L$ the linear dual of $C$. 
Equation~\eqref{Dnes je koncert ale nechce se mi do deste.} 
in this setup yields
\[
\big[(a'a''),m,(b''b')\big] = \big[a', [a'' \hskip -.3 em ,m,b''],b'\big], \ a',a''
\in L, \ \  b'' \hskip -.2 em   ,b' \in R, \ m \in M,
\]
where $[-,-,-] : L \ot M \ot R \to M$ is the structure operation of the
bimodule $M$. This explains the terminology used in
Definition~\ref{Musim se objednat na ocni.}. 
\end{example}

\section{Unary bioperadic categories}
\label{Posloucham ricercar.}

\lettrine{\color{red} D} {ioperadic} categories in 
Definition~\ref{Musim se objednat na endokrinologii.} 
are structures satisfying the smallest set of conditions that
guarantee the 
existence of operads, cooperads and bimodules. In this
section we analyze when also the notions of the associated algebras,
coalgebras and traces 
make sense. To do this, we resume our quest for a `bivariant' 
definition of algebras by
analyzing the unary version of~\cite[Definition~1.20]{duodel}.

\subsection{Operad algebras revisited}
Let $\ttO$ be a unary operadic category and $\oP$ an $\ttO$-operad
with structure operations as in~\eqref{Jarka zase blbne.}. A
$\oP$-algebra is `classically'  a collection $A = \{A_c \}_{c \in
\pi_0(\ttO)}$ of objects of~$\ttV$ indexed by the set $\pi_0(\ttO)$
of connected components of $\ttO$, equipped with structure operations
that are the unary versions of~\eqref{Dnes zverejneny propozice na
    Safari.}, i.e.\  
\begin{equation}
\label{Za tri dny na Safari.}
a_T : \oP(T) \ot A_{\sou T} \to   A_{\tar T},\ T \in \ttO,
\end{equation}
where $\sou T$ is the
connected component of the unique fiber $U_T$ of the identity $\id_T: T
\to T$, and  $\tar T  := \pi_0(T) \in  \pi_0(\ttO)$ is the connected
component of $T$. The associativity of the actions~(\ref{Za tri dny
  na Safari.}) requires that the diagram
\begin{equation}
\label{Zitra odjizdim na Safari.}
\xymatrix{
\oP(T) \ot \oP(F) \ot A_{\sou F}
\ar[r]^(.55){\id \ot a_F} \ar[d]_{\gamma_h \ot \id}
 & \oP(T) \ot A_{\tar F} \ar@{=}[r]^{\hbox{\textcircled{\scriptsize 3}}}
& \oP(T) \ot A_{\sou T}\ar[d]^(.6){a_T}
\\
\ar@{=}[d]_{\hbox{\textcircled{\scriptsize 1}}}
\oP(S) \ot A_{\sou F} &
\boxed{F  \fib S \stackrel h \longrightarrow T}&  A_{\tar T} \ar@{=}[d]^{\hbox{\textcircled{\scriptsize 2}}}
\\
\oP(S) \ot A_{\sou S} \ar[rr]^{a_S}
 && A_{\tar S} 
}
\end{equation}
commutes for each $F \, \fib \, S \stackrel h\longrightarrow T$.  

Let us explain the 
equalities in~(\ref{Zitra odjizdim na Safari.}). 
Equality \textcircled{\scriptsize
  1} follows from the equality $U_S = U_F$ in the diagram
\[
\xymatrix@R=-.3em@C=.8em{\hskip -1.8em U_F\ \fib F
  \ar[rr]^{\id_F}   
&&F \hskip -.5em
\\
\hskip -1.4em\raisebox{.7em}{\rotatebox{270}{$=$}}\hskip 1.8em\raisebox{.7em}{{\rotatebox{270}{$\fib$}}} && 
\hskip -.2em\raisebox{.7em}{{\rotatebox{270}{$\fib$}}} \hskip -.5em
\\
\hskip -1.8em U_S\ \fib S \ar[rr]^{\id_S}  \ar@/_.9em/[ddddddr]_(.4){h}   &&
S \ar@/^.9em/[ddddddl]^(.4){h} \hskip -.5em
\\&& 
\\&& 
\\&& 
\\&&
\\ &&
\\
& T &
}
\]
which is the particular case of~\cite[equation~(4)]{blob}.
Equality \textcircled{\scriptsize  2} follows from the mere existence
of the map  $h :S \to T $ that implies that $S$ and $T$ belong to the
same component of $\ttO$.
Similarly, \textcircled{\scriptsize 3} follows from the existence of the 
induced map $h_T : F \to U_T$
from the fiber of $h :S \to T $ to the fiber of $\id_T : T \to
T$ in the diagram
\[
\xymatrix@R=-.3em@C=.8em{ F
  \ar[rr]^{h_T}   
&&U_T \hskip -.5em
\\
\hskip -1.7em
\hskip 1.65em\raisebox{.7em}{{\rotatebox{270}{$\fib$}}} && 
\hskip -.5em\raisebox{.7em}{{\rotatebox{270}{$\fib$}}} \hskip -.9em
\\
S \ar[rr]^{h}  \ar@/_.9em/[ddddddr]_(.4){h}   &&
T \ar@/^.9em/[ddddddl]^(.4){\id_T} \hskip -.5em
\\&& 
\\&& 
\\&& 
\\&&
\\ &&
\\
&\ T. &
}
\]

\subsection{Unary bioperadic categories -- definition and examples}
Let us try to formulate a bivariant definition of operad algebras, i.e.\
the one that uses the sources and the targets satisfying 
\begin{equation}
\label{xy}
\sou X = \tar{X^\dagger} \ \hbox { and } \ \tar X = \sou{X^\dagger}, \
\hbox{ for any } \ X \in \ttQ. 
\end{equation} 
If $\ttQ = (\ttQ_L,\ttQ_R)$ is a (left or/and right) dioperadic category, 
the obvious choice is to
define the {\em source} \ $\sou X$ 
resp.~the {\em target} $\tar X$ of $X$ as the connected component of
the fiber $U_X$, resp.~cofiber $C_X$, of the identity $\id_X : X \to X$,
in shorthand 
\begin{equation}
\label{Pojedeme do Telce?}
U_X \ \fib  X \stackrel{\id_X}\longrightarrow X \cof \ C_X \ 
\Longrightarrow \
\sou X := \pi_0(U_X),\ \tar X := \pi_0(C_X).
\end{equation}
Such a choice obviously fulfills the self-duality property~(\ref{xy}).  

Let $\ttQ$ be a dioperadic category. To each morphism $h :S \to T$ in~(\ref{Pocasi nic moc ale naletal jsem
  3 hodiny.}) we 
associate  its {\/\em 
  analysis\/}, which is the scheme
\begin{equation}
\label{Dnes jsme kupovali Jarce vonavcicku.}
\xymatrix@C=-.4em@R=-.2em{
U_F \ar@{=}[dd] &\fib &F  \ar[rrrrrrrrrrrr]^{\id_F} &&&&&&&&&&&& F  &\cof& C_F
\\
&&\raisebox{.7em}{{\rotatebox{270}{$\fib$}}}&&&&&&&&&&&&
\raisebox{.7em}{{\rotatebox{270}{$\fib$}}}
\\
U_S&\fib & \ar[dddddd]_h  S \ar[rrrrrrrrrrrr]^{\id_S}  
&&&&&&&&&&&& S \ar[dddddd]^h     &\cof& C_S
\\
\\
\\
 &&&&&&\hbox{\hskip 1em \boxed{\tt analysis\rule{0em}{.85em}}}&&&&&&
\\
\\
\\
U_T&\fib &   T \ar[rrrrrrrrrrrr]^{\id_T}   &&&&&&&&&&&& T&\cof&\ C_T
\\
&&\raisebox{.3em}{{\rotatebox{90}{\hskip -.2em$\fib$}}}&&&&&&&&&&&&
\raisebox{.3em}{{\rotatebox{90}{\hskip -.2em$\fib$}}}
\\
U_D&\fib&D \ar[rrrrrrrrrrrr]^{\id_D}  &&&&&&&&&&&& D &\cof&\ C_D \ar@{=}[uu]
}
\end{equation} 
in which the two equalities follow from the axioms of operadic
categories $\ttQ_L$ and $\ttQ_R$.

Consider again the associativity diagram~\eqref{Zitra odjizdim na
  Safari.} for $\ttO = \ttQ_F$, but now
with the sources and targets defined in~(\ref{Pojedeme do Telce?}).
Equality~\textcircled{\scriptsize 1} is implied by the equality $U_F =
U_S$ in the upper left corner of~(\ref{Dnes jsme kupovali Jarce vonavcicku.}).  
Equality  \textcircled{\scriptsize 2} however requires
$\pi_0 (C_S)  =
\pi_0(C_T)$ which need not hold in general.
Equality
\textcircled{\scriptsize 3}, i.e.~$A_{\tar F} = A_{\sou T}$, is
moreover of very different nature, since it refers both to the fiber and
cofiber structures of~$\ttQ$. 

Definition~\ref{Jarka ma ovoce.} below formulates conditions 
assuring that diagram~\eqref{Zitra odjizdim na Safari.} and the
similar diagrams for coalgebras and traces make sense. The notation refers to
the analysis~(\ref{Dnes jsme kupovali Jarce vonavcicku.}) of $h: S\to T$.

\begin{definition}
\label{Jarka ma ovoce.}
A {\/\em left bioperadic\/} category is a left dioperadic category $\ttQ$ 
such that,
for an arbitrary morphism $h: S \to T$ in the fiber subcategory $\ttQ_F$, 
\begin{subequations}
\begin{align}
\label{1}
C_F =& \ U_T, \ \hbox { and}
\\
\label{1bis}
C_S &= C_T.
\end{align}
\end{subequations}
Dually, a right dioperadic category $\ttQ$ is {\/\em right
  bioperadic\/} if, for an arbitrary $h: S \to T$  in the cofiber
subcategory $\ttQ_C$, 
\begin{subequations}
\begin{align}
\label{2}
U_D =\ & C_S, \ \hbox { and}
\\
\label{2bis}
U_T& = U_S.
\end{align}
\end{subequations}
Finally, $\ttQ$ is {\em bioperadic\/} if it is both left and right
dioperadic, and if ~(\ref{1}) and~(\ref{2}) are fulfilled for any morphism $h:
S \to T$ of $\ttQ$. 
\end{definition}

Observe that~\eqref{1},  with the sources and 
targets~\eqref{Pojedeme do Telce?}, implies $\tar F = \sou T$,   thus
$A_{\tar F} = A_{\sou T}$ as required 
in \textcircled{\scriptsize 3} of~\eqref{Zitra odjizdim na Safari.}. 
Similarly,~\eqref{1bis} implies  $\tar S = \tar T$.
Dually,~\eqref{2} implies  $\sou D = \tar S$ and
\eqref{2bis} implies $\sou T = \sou S$. 

\begin{remark}
\label{Hraji si se cteckou.}
Let  $F \ \fib S \stackrel h\to T \cof \ D$ be a morphism of a
bioperadic category $\ttQ$.
The equalities $\sou F = \sou S$ resp.~$\tar T = \tar D$ always hold 
by the axioms of the operadic categories $\ttQ_L$
resp.~$\ttQ_R$. Combining them with~\eqref{1bis}--\eqref{2} 
we conclude that, for $h$ in the intersection
$\ttQ_F \cap \ttQ_C$, 
\begin{equation}
\label{Posloucham Ceske a morevske barokni varhany II.}
\tar S = \tar T = \tar D = \sou D\ 
\hbox { and } 
\
\sou S = \sou T = \sou F = \tar F.
\end{equation}
\end{remark}

\begin{proposition} 
\label{Najdu odvahu se dnes v te strasne zime projet na kole?}
The inclusion of sets
\begin{subequations}
\begin{equation}
\label{Za chvili}
\big\{ U_T \ | \ T \in \ttQ_F\big\} \subseteq \big\{ C_T \ | \ T \in
\ttQ_F\big\}
\ \hbox { implying }\
\big\{ \sou T \ | \ T \in \ttQ_F\big\} \subseteq \big\{ \tar T \ | \ T \in \ttQ_F\big\}
\end{equation}
where, as before, $U_T$ resp.~$C_T$ is the fiber resp.~cofiber of the
identity $\id_T : T \to T$,
holds in any left bioperadic category.
Dually, the inclusion 
\begin{equation}
\label{sraz u Podlipneho.}
\big\{ U_T \ | \ T \in \ttQ_C\big\} \supseteq \big\{ C_T \ | \ T \in
\ttQ_C\big\}
\ \hbox { implying }\
\big\{ \sou T \ | \ T \in \ttQ_C\big\} 
\supseteq \big\{ \tar T \ | \ T \in \ttQ_C\big\}
\end{equation}
holds in any right bioperadic category.
If\/ $\ttQ$ is bioperadic, then
\begin{equation}
\label{pinkalove}
\big\{ \sou T \ | \ T \in \ttQ\big\} = \big\{ \tar T \ | \ T \in
\ttQ\big\}
\ \hbox { implying }\ 
\big\{ \sou T \ | \ T \in \ttQ\big\} 
= \big\{ \tar T \ | \ T \in \ttQ\big\}.
\end{equation}
Thus the sets of the sources and the targets of a bioperadic category 
are the same. 
\end{subequations}
\end{proposition}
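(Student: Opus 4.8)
The plan is to prove the three displayed inclusions in turn, each by directly unwinding the defining conditions of the corresponding bioperadic notion. For the left bioperadic case~\eqref{Za chvili}, I would take an arbitrary $T \in \ttQ_F$ and exhibit $U_T$ as a cofiber of the form $C_{T'}$ for some $T' \in \ttQ_F$. The natural candidate for $T'$ is the fiber $U_T$ itself: since $\ttQ_F$ is a subcategory (by perfectness of $\ttQ_L$) closed under the fiber construction, $U_T$ is an object of $\ttQ_F$, and I claim $C_{U_T} = U_T$. More precisely, I would apply~\eqref{1} to a suitable morphism in $\ttQ_F$ whose fiber is $U_T$ and whose target is $U_T$. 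The identity $\id_{U_T} : U_T \to U_T$ has fiber $U_{U_T}$ and cofiber $C_{U_T}$; one checks that $U_{U_T} = U_T$ (this is the equality $U_S = U_F$ from~\cite[equation~(4)]{blob} applied to $\id_T$, which gives that the fiber of $\id_T$ is a fixed point of $U_{(-)}$), and then~\eqref{1} applied to $h = \id_{U_T}$ reads $C_{U_{U_T}} = U_{U_T}$, i.e. $C_{U_T} = U_T$. Hence $U_T = C_{U_T}$ lies in the right-hand set, giving the first inclusion; applying $\pi_0$ yields the implied inclusion of sources into targets. The right bioperadic case~\eqref{sraz u Podlipneho.} is the formal $(-)^\dagger$-dual of this argument, using~\eqref{2} in place of~\eqref{1}, so I would simply invoke duality rather than repeat it.

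For the bioperadic case~\eqref{pinkalove}, the idea is that when $\ttQ$ is bioperadic, conditions~\eqref{1} and~\eqref{2} hold for \emph{all} morphisms of $\ttQ$, not merely those in $\ttQ_F$ or $\ttQ_C$; in particular both hold for all identity morphisms, and more usefully we may run both arguments above on the full $\ttQ$. The first inclusion above, applied with $\ttQ$ in place of $\ttQ_F$, gives $\{\sou T \mid T \in \ttQ\} \subseteq \{\tar T \mid T \in \ttQ\}$; the dual gives the reverse inclusion $\{\tar T \mid T \in \ttQ\} \subseteq \{\sou T \mid T \in \ttQ\}$ — here one needs that in a bioperadic category the cofiber construction also lands in $\ttQ$, which is immediate since $\ttQ$ is the whole underlying category. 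Combining the two inclusions yields the asserted equality of sets, and the ``implied'' equality is just the same statement after applying $\pi_0$ (the two displays in~\eqref{pinkalove} are in fact literally the same, so no further work is needed there).

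The step I expect to be the main obstacle is the verification, in the first paragraph, that the fiber of an identity is a fixed point of the ``fiber of identity'' operation, i.e. $U_{U_T} = U_T$, and its cofiber-side counterpart $C_{C_T} = C_T$, since these are what let the candidate objects $U_T$ and $C_T$ certify the inclusions. This is where one must actually use the operadic-category axioms for $\ttQ_L$ and $\ttQ_R$ — concretely the associativity/compatibility axiom governing iterated fibers of identities, as recorded in~\cite[equation~(4)]{blob} — rather than just the bioperadic conditions~\eqref{1}--\eqref{2}. Everything else is a matter of quoting~\eqref{1} and~\eqref{2} for the right choice of morphism and then pushing the resulting object-level equalities through $\pi_0$; I would present the left case in full and dispatch the right and bioperadic cases by the duality functor $(-)^\dagger$ and by specializing the already-proven inclusions to $T \in \ttQ$.
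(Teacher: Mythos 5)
Your proof is correct and follows essentially the same route as the paper's: both derive the key identity $U_T = C_{U_T}$ (resp.\ $U_{C_T}=C_T$) by applying \eqref{1} (resp.\ \eqref{2}) to an identity morphism, then dualize and combine for the bioperadic case. The paper is marginally shorter only because it applies \eqref{1} directly to $\id_T$, whose fiber is $U_T$, obtaining $C_{U_T}=U_T$ in one step and thereby bypassing the fixed-point lemma $U_{U_T}=U_T$ that your detour through $\id_{U_T}$ requires.
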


\begin{proof}
Apply equality \eqref{1} to $h = \id_S$, i.e.\ to the situation 
$U_S \fib S \stackrel{\id_S}\to S \cof C_S$ to obtain
$U_S = C_{U_S}$ for $S \in \ttQ_F$, which implies~(\ref{Za chvili}). 
Similarly, \eqref{2} gives $U_{C_S} = C_S$ for $S \in \ttQ_C$, which 
implies~(\ref{sraz u Podlipneho.}). In a bioperadic category, $U_S =
C_{U_S}$ and  $U_{C_S} = C_S$ hold for any $S \in \ttQ$, which
gives~(\ref{pinkalove}). 
\end{proof}

\begin{example}
\label{Maly Meda}
Any  abelian category~$\ttA$ with the dioperadic structure  $\ttA =
(\ttA_L,\ttA_R)$ in Example~\ref{Snehulacek se tesi na snih.} is
bioperadic. Indeed, the kernels and cokernels of the identities equal
the null object, thus~\eqref{1bis}--\eqref{2} are trivially fulfilled. 
The similar argument holds also 
for the category $\Setp$ with the dioperadic structure in
Example~\ref{Vecer odjizdim na oslavu Silvestra, zitra mam letani.}.  
\end{example}

\begin{example}
Let us come back to the arrow category 
$\Arr$ from Example~\ref{Jarka nemocna jako kazdou sudou sobotu.}. 
For $F \ \fib S \xrightarrow h T\cof \ D$ given by the commutative 
square~\eqref{tahadlo} we compute
\[
C_F =
\raisebox{1.3em}{
$
\xymatrix@R=1.5em{
c \ar[d]^{\id_c}
\\
c
}
$},\
U_T =
\raisebox{1.3em}{
$
\xymatrix@R=1.5em{
c \ar[d]^{\id_c}
\\
c
}
$}
,\
U_D =
\raisebox{1.3em}{
$
\xymatrix@R=1.5em{
b \ar[d]^{\id_b}
\\
b
}
$}\ \hbox { and } \
C_S =
\raisebox{1.3em}{
$
\xymatrix@R=1.5em{
b \ar[d]^{\id_b}
\\
b
}
$},
\]
so both~\eqref{1} and~\eqref{2} is fulfilled by all morphisms. We
moreover have
\[
C_T =
\raisebox{1.3em}{
$
\xymatrix@R=1.5em{
d \ar[d]^{\id_d}
\\
d
}
$}
\ \hbox { and } \
U_S =
\raisebox{1.3em}{
$
\xymatrix@R=1.5em{
a \ar[d]^{\id_a}
\\
a
}
$}.
\]
Since $b=d$ if $h$ belongs to the fiber subcategory,~\eqref{1bis}
is fulfilled;~\eqref{2bis} is fulfilled for the similar reasons.
The category $\Arr$ is therefore bioperadic. 
\end{example}

\subsection{Algebras, coalgebras and traces}
Assume that $\ttQ$ is left bioperadic,
$F \in \ttQ_F$ an object of the fiber subcategory, and $U_F \ \fib
F \Xarrow {\id_F} F \cof \ C_F$. It is clear that $U_F \in \ttQ_F$,
but we 
claim that $C_F \in  \ttQ_F$ too. Indeed, $F$ is a fiber of a map, say
$F \ \fib S \Xarrow {h} T$, so $C_F = U_T$
by~\eqref{1}, while clearly $U_T \in \ttQ_F$. 

Dually, if $\ttQ$ is
right dioperadic and $D \in \ttQ_C$ an object of the cofiber
subcategory, 
then both $U_D$ and $C_D$ in the diagram $U_D \ \fib
D \Xarrow {\id_D} D \cof \ C_D$ belong to $\ttQ_C$.
The structure maps $a_F$ and $b_D$ in the following definition are
therefore well-defined.

\begin{definition}
\label{Jarka dnes jela do Motola.}
Let $\ttQ$ be a left bioperadic category and $\oL$
a $\ttQ_F$-operad. A {\em $\oL$-algebra} is a collection  $A = \{A_c\}_{c
  \in \pi_0(\ttQ_F)}$ of objects of $\ttV$ together with structure operations
\begin{subequations}
\begin{equation*}
a_F : \oL(F) \ot A_{\sou F} \longrightarrow   A_{\tar F},\ F \in \ttQ_F,
\end{equation*}
where the sources and targets are as
in~(\ref{Pojedeme do Telce?}), such that the diagram
\begin{equation}
\label{Dnes predsedam Rade}
\xymatrix{
\oP(T) \ot \oP(F) \ot A_{\sou F}
\ar[r]^(.58){\id \ot a_F} \ar[d]_{\gamma_h \ot \id}
 & \oP(T) \ot A_{\tar F} \ar@{=}[r]^{\eqref{1}}
& \oP(T) \ot A_{\sou T}\ar[d]^(.6){a_T}
\\
\ar@{=}[d]_{\rm axiom\ of\ \ttQ_L\ }
\oP(S) \ot A_{\sou F} &\boxed{F  \fib S \stackrel h \longrightarrow T}
&  A_{\tar T} \ar@{=}[d]^{{\eqref{1bis}}}
\\
\oP(S) \ot A_{\sou S} \ar[rr]^{a_S}
 && A_{\tar S} 
}
\end{equation}
commutes for each $F  \fib S \stackrel h \to T$ in $\ttO_F$.

Dually, suppose that  $\ttQ$ is right bioperadic
 and $\cooP$ a $\ttQ_C$-cooperad. A {\em $\cooP$-coalgebra}  is a collection $B = \{B_c\}_{c
  \in \pi_0(\ttQ_C)}$ together with structure operations
\begin{equation*}
b_D : \cooP(D) \ot B_{\sou D} \longrightarrow   B_{\tar D},\ D \in \ttQ_C,
\end{equation*}
such that the diagram
\begin{equation}
\label{s2}
\xymatrix{
\cooP(D) \ot \cooP(S) \ot B_{\sou S}
\ar[r]^(.58){\id \ot b_S} 
 & \cooP(D) \ot B_{\tar S} \ar@{=}[r]^{\eqref{2}}
& \cooP(D) \ot B_{\sou D}\ar[d]^(.6){b_D}
\\
\ar@{=}[d]_{(\ref{2bis})} \ar[u]_{\delta_h \ot \id}
\cooP(T) \ot B_{\sou S} &\boxed{S \stackrel h\longrightarrow T \cof D}
&  B_{\tar D} \ar@{=}[d]^{\ \rm axiom\ of\ \ttQ_R}
\\
\cooP(T) \ot B_{\sou T} \ar[rr]^{b_T}
 && B_{\tar T} 
}
\end{equation}
commutes for each $S \stackrel h\to T \cof D$ in $\ttQ_C$.
\end{subequations}
\end{definition}

\begin{definition}
\label{Podari se mi ten clanek vubec publikovat?}
Let  $\ttQ$ be a bioperadic category and 
$\oM$ a \CP-bimodule as in Definition~\ref{Musim se objednat na
  ocni.}. An {\/\em $\oM$-trace\/} consists of a $\oL$-algebra 
$A$  and an $\cooP$-coalgebra $B$ as above, together with structure operations
\begin{equation*}
c_T : \oM(T) \ot A_{\sou T} \longrightarrow   B_{\tar T},\ T \in \ttQ,
\end{equation*}
such that the diagram
\begin{equation}
\label{Jarce hrabe z chalupy.}
\xymatrix@C=2em{
\oM(T) \ot \oP(F) \ot A_{\sou F} \ar[r]^(.56){\id \ot a_F}
\ar[d]_{\omega_h \ot \id}
&
\oM(T) \ot  A_{\tar F} \ar@{=}[r]^{\eqref{1}}
&\oM(T) \ot  A_{\sou T} \ar[r]^(.63){c_T} & B_{\tar T}
\ar@{=}[dd]^{\ \rm axiom \ of \ \ttQ_R}
\\
\cooP(D) \ot \oM(S) \ot A_{\sou F} \ar@{=}[d]_{\ \rm axiom \ of \ \ttQ_L}
& \boxed{F \fib S \stackrel h\longrightarrow T \cof D}
\\
\cooP(D) \ot \oM(S) \ot A_{\sou S} \ar[r]^(.57){\id \ot c_S}
& \cooP(D) \ot B_{\tar S}  \ar@{=}[r]^{\eqref{2}}
& \cooP(D) \ot B_{\sou D}  \ar[r]^(.63){b_D}
& B_{\tar D}
}
\end{equation}
commutes for each $F \fib S \stackrel h\to T \cof D$ in $\ttQ$.
\end{definition}

Notice that diagrams~(\ref{Dnes predsedam Rade}),(\ref{s2})  
and~\eqref{Jarce hrabe z chalupy.} make sense
because we assumed~\eqref{1bis}--\eqref{2}.
The punchline of our approach thus reads:
\begin{center}
{\em Bioperadic categories are dioperadic categories for which
  algebras over operads, coalgebras \\ over cooperads, and
traces over bimodules could be defined.}  
\end{center}

\begin{example}
In the situation of Example~\ref{V Cechach je plno snehu a ja trcim v
  Izraeli.}, Definition~\ref{Podari se mi ten clanek vubec
  publikovat?} describes structures consisting of an 
associative algebra $R$ acting on a left module $A$, of a coassociative
coalgebra $C$ acting on a left module $B$, and of a vector space $M$  with  an
action $c: M \ot A \to B$. The commutativity of~(\ref{Jarce hrabe z
  chalupy.}) requires that
\begin{equation}
\label{Napadne snih jeste pred zacatkem Zimni skoly?}
c\big(m \ot (ra)\big) =  \sum  c_{(1)} c \big(m_{(2)}\ot a\big), \ m \in M,\ r \in R, \ a \in A,
\end{equation}
with $\sum c_{(1)} \ot m_{(2)} \in C \ot M$ denoting the image of $m \ot r  \in
M \ot R$ under 
the structure map~\eqref{Pracuji na prubezne zprave.}.

Equation~\eqref{Napadne snih jeste pred zacatkem Zimni skoly?} assumes
a particularly nice form when the coalgebra $C$ equals the ground
field~$\bbk$ with the comultiplication given by the canonical isomorphism 
$\bbk \Xarrow \cong \bbk \ot \bbk$ and when $B$ bears the 
trivial $\bbk$-action. Equation~(\ref{Napadne snih jeste pred zacatkem
  Zimni skoly?}) in this case says that the map
\[
\varpi :  M \to {\rm Hom}(A,B),
\]
adjoint to $c: M \ot A \to B$, is a morphism of right $R$-modules.
Here the $R$-module structure of $M$ is given by the 
action~\eqref{Pracuji na prubezne zprave.}
after the identification  \hbox{$\bbk\! \ot\! M \cong M$}, 
and the right $R$-action on
${\rm Hom}(A,B)$ is induced by the left $R$-action on $A$ in the usual
manner. Less simple-minded examples can be found in
Examples~\ref{Vcera jsem byl na schuzi Neuronu.} and~\ref{Necham
  rozmrznout angrest z Jarciny zahradky.} below. 
\end{example}

\section{Bicharades}
\label{Jsem pred-predposledni den v Haife.}

\lettrine{\color{red} I} {n} 
this section we explore a particular property of the
dioperadic category $\ttA$ introduced in Example~\ref{Snehulacek se
  tesi na snih.}. As before we
consider triples   $\ttQ = (\ttQ_L, \ttQ_R)$ of a unary
operadic category $\ttQ_L$ with the underlying category $\ttQ$,
together with a unary operadic category~$\ttQ_R$ whose  underlying
category is the  category $\ttQ^{\dagger}$ opposite to $\ttQ$.

\begin{definition}
\label{Prinutim se dnes k behu?}
A {\em bicharadic category\/} 
is a triple $\ttQ =
(\ttQ_L,\ttQ_R)$ such that,  
in the situation described by diagram~(\ref{Pujdeme spolu na
  demonstraci?}), the fiber $\Omega$  of $g^S$ is naturally isomorphic to 
the cofiber $\mho$ of $h_Z$. We moreover require that each
isomorphisms $s: S' \to S''$ and $t: T' \to T''$ in the commutative
square
\begin{equation}
\label{Dnes jsem byl s Jarkou na elektrine.}
\xymatrix@C=-.3em{
F'\ar@{-->}[d]_{s_t} 
&\fib& \ar[d]_s^\cong S' \ar[rr]^{h'} &\rule {3em}{0em}&\ar[d]^t_\cong  T'& 
\cof & D'\ar@{-->}^{t_s}[d]
\\
F''  &\fib& S''\ar[rr]^{h''} &\rule {3em}{0em}&  T''& 
\cof & D''
}
\end{equation}
induce functorial isomorphisms $s_t : F' \to F''$ and $t_s : D' \to D''$
such that $s_t$ for $t = \id$ resp.~$t_s$ for $s = \id$ equals the
induced map between the fibers in $\ttQ_L$ resp.~$\ttQ_R$.

A  {\/\em strict bicharadic category\/} is a triple $\ttQ =
(\ttQ_L,\ttQ_R)$ such that 
the fiber of $g^S$ in diagram~(\ref{Pujdeme spolu na
  demonstraci?}) equals the  cofiber of $h_Z$ in the same diagram, 
so that we have
\begin{equation*}
\psscalebox{1.0 1.0} 
{
\begin{pspicture}(1,1)(5.943182,1.8)
\rput(2.142857,1.2418182){$F$}
\rput(2.6,1.2418182){$\fib$}
\rput(2.9714286,1.2418182){$X$}
\rput(5.0363636,1.2418182){$Y$}
\rput(2.9636364,0.74){$\raisebox{.7em}{{\rotatebox{270}{$\fib$}}}$}
\rput(5.045,0.74){$\raisebox{.7em}{{\rotatebox{270}{$\fib$}}}$}
\rput(5.82,0.74){$\raisebox{.7em}{{\rotatebox{270}{$\fib$}}}$}
\rput(2.1818182,0.6881818){$\raisebox{.3em}{{\rotatebox{90}{$=$}}}$}
\rput(2.142857,0.28181817){$F$}
\rput(2.6,0.28181817){$\fib$}
\rput(2.9714286,0.28181817){$S$}
\rput(5.0363636,0.28181817){$T$}
\psline[linecolor=black, linewidth=0.02, arrowsize=0.05291667cm 2.0,arrowlength=1.4,arrowinset=0.0]{->}(3.1818182,1.2418182)(4.818182,1.2818182)
\psline[linecolor=black, linewidth=0.02, arrowsize=0.05291667cm 2.0,arrowlength=1.4,arrowinset=0.0]{->}(3.1818182,0.24181817)(4.818182,0.24181817)
\rput(5.3636365,0.28181817){$\cof$}
\rput(5.3636365,1.2181817){$\cof$}
\rput(5.818182,0.28181817){$D$}
\rput(5.8,1.2181817){$\Xi$}
\rput(4.0,-2.030909){$Z$}
\pscustom[linecolor=black, linewidth=.04]
{
\newpath
\moveto(7.090909,2.06)
}
\pscustom[linecolor=black, linewidth=.04]
{
\newpath
\moveto(7.818182,1.2418182)
}
\pscustom[linecolor=black, linewidth=0.02]
{
\newpath
\moveto(7.4545455,1.06)
}
\pscustom[linecolor=black, linewidth=0.02]
{
\newpath
\moveto(7.090909,0.6963636)
}
\pscustom[linecolor=black, linewidth=0.02]
{
\newpath
\moveto(7.272727,2.5145454)
}

\pscustom[linecolor=black, linewidth=0.02]
{
\newpath
\moveto(5.0,0.059999943)
\lineto(4.772727,-0.6672727)
\curveto(4.6590905,-1.0309088)(4.4772725,-1.4854544)(4.2727275,-1.7581818)
}
\psline[linecolor=black, linewidth=0.02, arrowsize=0.05291667cm 2.0,arrowlength=1.4,arrowinset=0.0]{->}(4.2727275,-1.7581818)(4.181818,-1.8490909)
\pscustom[linecolor=black, linewidth=0.02]
{
\newpath
\moveto(2.9818182,0.059999943)
\lineto(3.209091,-0.6672727)
\curveto(3.322727,-1.0309088)(3.5045452,-1.4854544)(3.709091,-1.7581818)
}
\psline[linecolor=black, linewidth=0.02, arrowsize=0.05291667cm 2.0,arrowlength=1.4,arrowinset=0.0]{->}(3.709091,-1.7581818)(3.8,-1.8490909)
\rput(3.3818183,-2.5581818){$H$}
\rput(4.6181817,-2.5581818){$R$}
\rput(3.7363637,-2.4036363){$\raisebox{.7em}{{\rotatebox{38}{$\fib$}}}$}
\rput(4.3,-2.94){$\raisebox{.7em}{{\rotatebox{38}{$\fib$}}}$}
\rput(4.2636365,-2.3036363){$\raisebox{.7em}{{\rotatebox{150}{$\fib$}}}$}
\pscustom[linecolor=black, linewidth=0.02]
{
\newpath
\moveto(5.8,0.059999943)
\lineto(5.517172,-0.89584416)
\curveto(5.375757,-1.3737662)(5.1494946,-1.9711689)(4.8949494,-2.3296103)
}
\psline[linecolor=black, linewidth=0.02, arrowsize=0.05291667cm 2.0,arrowlength=1.4,arrowinset=0.0]{->}(4.8949494,-2.3296103)(4.7818184,-2.449091)
\rput(4.0,-3.121818){$H$}
\rput(3.7454545,-2.9){$\raisebox{.7em}{{\rotatebox{135}{$=$}}}$}
\rput[b](3.909091,1.4236363){\scriptsize $h_Z$}
\rput[b](4.0,0.42363638){\scriptsize $h$}
\rput[l](4.818182,-1.1218182){\scriptsize $g$}
\rput[bl](5.5454545,-1.3945454){\scriptsize $g^S$}
\rput(2.9181818,-1.1218182){\scriptsize $gh$}
\end{pspicture}
}
\raisebox{-13em}{\rule{0pt}{0pt}}
\end{equation*}
\end{definition}

The naturality of the isomorphism between the fiber $\Omega$ and
the cofiber $\mho$  required in
Definition~\ref{Prinutim se dnes k behu?} refers to commutative
diagrams  of the form
\[ 
\xymatrix@R=1em{&&T'' \ar[dddd]|(.3)\hole_{g''}  &
\\
S\ar[rru]^{h''} \ar[rrddd]_{g'h' = g''h''}   \ar[rrr]^{h'} &&&\ T'.\ar[ul]_u 
\ar[dddl]_{g'}
\\
\\
\\
&&Z&
}
\]
For such a diagram  denote by
$\mho'$ the cofiber of the induced
map $h_{{\raisebox{,1em}{\scriptsize$Z$}}}'$ between the fibers of $g'h'$ and $g'$, and by $\Omega'$ the fiber of 
the  induced map $g'^S$ between the cofiber of $h'$ and the cofiber 
of~$g'h'$. Let $\mho''$ and $\Omega''$ have the similar meanings, so
that, in the notation parallel to~(\ref{Pujdeme spolu na
  demonstraci?}),
\[
X\ \Xarrow {h'_Z}\ Y' \cof \ \mho',\ 
\Omega' \ \fib D' \ \Xarrow {g'^S}\ R,
\ 
X \ \Xarrow {h''_Z} \ Y'' \cof \ \mho''\ \hbox { and } \ 
\Omega''  \fib D''\ \Xarrow {g''^S}\ R.
\]
The commutative triangle of the induced maps between fibers induces the map $u_Z^X : \mho' \to \mho''$ in 
\[
\xymatrix@R=-.3em@C=.8em{\mho'
  \ar@{-->}[rr]^(.55){u_Z^X}   
&&\mho'' \hskip -.5em
\\
\hskip -2em
\hskip 1.9em\raisebox{.7em}{{\rotatebox{270}{$\fib$}}} && 
\hskip -.1em\raisebox{.7em}{{\rotatebox{270}{$\fib$}}} \hskip -.1em
\\
Y' \ar[rr]^{u_Z}  \ar@{<-}@/_.9em/[ddddddr]_(.4){h'_Z}   &&
Y'' \ar@{<-}@/^.9em/[ddddddl]^(.4){h''_Z} \hskip -.5em
\\&& 
\\&& 
\\&& 
\\&&
\\ &&
\\
&X &
} 
\] 
Similarly, we construct 
a map $u_R^S : \Omega' \to \Omega''$.
The naturality is expressed by the
commutativity of
\[
\xymatrix{\mho'  \ar[r]^{\redukce{\scriptsize $u^X_Z$}}   \ar[d]_{\rho'}^\cong  &\mho''
 \ar[d]^{\rho''}_\cong 
\\
\Omega'  \ar[r]^{u_R^S} & \Omega''
}
\]
where $\rho' : \mho' \to \Omega'$ and  $\rho'' : \mho'' \to \Omega''$
are the isomorphism required in Definition~\ref{Prinutim se dnes k behu?}.

Strict bicharadic categories seem to be rare. For instance, 
the arrow category $\Arr$ of Example~\ref{Jarka nemocna jako
  kazdou sudou sobotu.} is strict bicharadic only if $\ttC$ has one object,
i.e.\ when it is an associative monoid. The following
proposition however shows that some  abelian categories produce
(non-strict) bicharadic categories.

\begin{proposition}
\label{Dosel mi caj, musim pit kavu.}
Assume that $\ttA$ is the abelian category $\RMod$ of modules over a
ring $R$. Then $\ttA$ with the fiber-cofiber structure of
Example~\ref{Snehulacek se tesi na snih.} is bicharadic.  
\end{proposition}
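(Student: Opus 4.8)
The plan is to verify, in the category $\RMod$ with the fiber–cofiber structure $\Ker(a)\fib X\xrightarrow a Y\cof\Cok(a)$, that in the situation of diagram~(\ref{Pujdeme spolu na demonstraci?}) the fiber $\Omega$ of $g^S$ is canonically isomorphic to the cofiber $\mho$ of $h_Z$, and then to check the naturality square. First I would unwind the maps concretely. We have a composite $S\xrightarrow h T\xrightarrow g Z$; write $X:=\Ker(gh)$, $Y:=\Ker(g)$ with $h_Z:X\to Y$ the restriction of $h$, and $F:=\Ker(h_Z)=\Ker(h)\cap\Ker(gh)=\Ker(h)$. On the cofiber side, $D:=\Cok(h)=T/h(S)$, $R:=\Cok(gh)=Z/gh(S)$, and $g^S:D\to R$ is the map induced by $g$, i.e.\ $g^S(t+h(S))=g(t)+gh(S)$. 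Then

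\begin{itemize}
\item $\mho=\Cok(h_Z)=Y/h_Z(X)=\Ker(g)\big/\big(h(S)\cap\Ker(g)\big)$, using that $h_Z(X)=h(\Ker(gh))=h(S)\cap\Ker(g)$ since $h^{-1}(\Ker g)=\Ker(gh)$;
\item $\Omega=\Ker(g^S)=\{t+h(S)\in T/h(S): g(t)\in gh(S)\}=\big(h(S)+\Ker(g)\big)\big/h(S)$, because $g(t)\in gh(S)=g(h(S))$ iff $t\in h(S)+\Ker(g)$.
\end{itemize}

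With these identifications the desired isomorphism $\rho:\mho\to\Omega$ is exactly the second isomorphism theorem: the inclusion $\Ker(g)\hookrightarrow h(S)+\Ker(g)$ descends to a well-defined isomorphism
\[
\Ker(g)\big/\big(h(S)\cap\Ker(g)\big)\ \xrightarrow{\ \cong\ }\ \big(h(S)+\Ker(g)\big)\big/h(S),
\qquad y+\big(h(S)\cap\Ker g\big)\ \longmapsto\ y+h(S).
\]
This is visibly not an equality of submodules — which is why $\RMod$ is bicharadic but not strict bicharadic — yet it is completely canonical, depending only on the submodules $h(S)$ and $\Ker(g)$ of $T$. I would also note that for the one-sided (idempotent $r=\id$) constructions one recovers the required compatibilities: taking $g=\id_T$ forces $h_Z=\id_X$ and $g^S=h^S$ appropriately, and $\rho$ reduces to the identity in the degenerate cases, matching the last clause of Definition~\ref{Prinutim se dnes k behu?}.

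The remaining work is the naturality square. Given the commutative diagram of induced maps between fibers determined by an isomorphism $u:T'\to T''$ over $Z$ with $g'=g''u$, $h''=u h'$, I would compute both composites in the square

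\[
\xymatrix{\mho'  \ar[r]^{\redukce{\scriptsize $u^X_Z$}}   \ar[d]_{\rho'}^\cong  &\mho''
 \ar[d]^{\rho''}_\cong
\\
\Omega'  \ar[r]^{u_R^S} & \Omega''
}
\]
explicitly: $u_Z^X:Y'\to Y''$ is the restriction of $u$ (note $u$ carries $\Ker(g')$ isomorphically onto $\Ker(g'')$ since $g''u=g'$, and $u(h'(S))=h''(S)$), so $u_Z^X$ passes to the quotient $\mho'\to\mho''$; likewise $u_R^S:\Omega'\to\Omega''$ is induced by $u$ on $\big(h'(S)+\Ker g'\big)/h'(S)\to\big(h''(S)+\Ker g''\big)/h''(S)$. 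Chasing $y+(h'(S)\cap\Ker g')$ around both ways gives $u(y)+h''(S)$ either way, so the square commutes on the nose. The main obstacle, such as it is, is bookkeeping: making sure that the generic names $F,X,Y,D,R,H,\mho,\Omega$ from~(\ref{Pujdeme spolu na demonstraci?}) and from the naturality paragraph are matched to the correct kernels and cokernels (in particular that $\mho$ is the \emph{cofiber} of $h_Z$ and $\Omega$ the \emph{fiber} of $g^S$, not the other way around), and that the chosen cokernels — which in $\RMod$ we may take to be honest quotient modules — make $\rho$ a genuine module map rather than merely an abstract iso; once the identifications above are in place, everything is the second isomorphism theorem applied functorially.
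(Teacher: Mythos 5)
Your proposal is correct and follows essentially the same route as the paper: the map $\rho$ you describe is exactly the one induced by the inclusion $\Ker(g)\hookrightarrow T$, and your appeal to the second isomorphism theorem is just a named packaging of the injectivity/surjectivity chase the paper performs by hand. Your explicit verification of the naturality square is if anything more detailed than the paper's, which simply cites universality.
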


\begin{proof}
Assume that the fibers and cofibers are given by the standard kernels
and cokernels in the category of modules, that is
\[
\Cok(h_Z) = \frac{\Ker(g)}{\Im(h) \cap \Ker(g)} \ \ \hbox { and } \ \
\Ker(g^S) = \Ker\left(
\xymatrix@R=1.5em@1{\displaystyle
\frac T{\Im(h)} \ \ar[r]^(.45){[g]} &  \ \displaystyle\frac Z  {\Im(gh)}}\right)
\]
where the map $[g]$ takes the equivalence class of $t \in T$ to the
equivalence class of $g(t) \in Z$.
Define $\rho : \Cok(h_Z) \to \Ker(g^S)$ as the map induced by the
inclusion $\Ker(g) \hookrightarrow T$.

Let us prove that $\rho$ is injective. Given an equivalence class $[k]
\in \Cok(h_Z)$ of some $k \in \Ker (g)$,
$\rho([k]) = 0$ means $k \in \Im(h)$, so  $k \in \Im(h) \cap \Ker(g)$
thus     $[k] = 0$ in
$\Cok(h_Z)$. 
To prove that $\rho$ is surjective, consider the class $[t] \in  T
\hbox { mod } \Im(h)$ of some $t \in T$. Such $[t]$ belongs to $\Ker( [g])$ if 
$g(t) = gh(s)$ for
some $s \in S$. If this happens, we replace $t$ by $t' : =t - h(s)$
which is the same mod
$\Im(h)$ on one hand, and which belongs to $\Ker (g) $ in
the one hand. We conclude that $[t]= [t'] \in \Im(\rho)$. 

The naturality of $\rho : \Cok(h_Z) \to \Ker(g^S)$ follows from the
universality of the objects featured in the proof.
The existence of functorial
induced maps in~(\ref{Dnes jsem byl s Jarkou na elektrine.}) is obvious.
\end{proof}

A natural morphism $\rho: \Cok(h_Z) \to  \Ker(g^S)$ exists in an arbitrary
Abelian category. It is induced by the composition $\delta: =\Ker(g)
\hookrightarrow T \epi \Cok(h)$ in the diagram
\begin{equation}
\label{Jarka byla zas na kapackach.}
\xymatrix@R=1em@C=2em{
\Ker(gh)\komp
\ar@{^{(}->}[dd] \ar[r] & \ar@{->>}[r]\Ker(g)\komp
\ar[rrdd]^\delta \ar@{^{(}->}[dd] &
\Cok(h_Z) \ar@{-->}[dr]^\rho
\\ && &  \Ker(g^S)\komp\ar@{^{(}->}[d]
\\
S \ar@/_1.8em/[rdd]^{gh}   \ar[r]^h& T \ar[dd]_g\ar@{->>}[rr] &&
\Cok(h)\ar[dd]^{g^S}
\\
\\
& Z  \ar@{->>}[rr] && \Cok(gh)
}
\end{equation}
We were however unable to construct an inverse of $\rho$, though the
full power of the axioms of Abelian
categories~\cite[IX.\S2]{Homology} might provide it.

If $\ttA$ is small, we can follow a suggestion of M.~Batanin, invoke
Mitchell's full imbedding theorem~\cite[Theorem~4.4]{Mitchell} and apply a
fully faithful exact functor $F: \ttA \to \RMod$ on the diagram
in~(\ref{Jarka byla zas na kapackach.}). We will get a similar diagram
in the category of $R$-modules in which $R(\rho)$ is an
isomorphism. Since $F$ is full and faithful, $\rho$ must be an
isomorphism too. However, the only Abelian category we will use will
be the category of finite dimensional vector spaces, so the current
formulation of Proposition~\ref{Dosel mi caj, musim pit kavu.} is
sufficient.

\begin{example}
Let us return to the category $\Setp$ of pointed small sets with the
dioperadic structure of Example~\ref{Vecer odjizdim na oslavu
  Silvestra, zitra mam letani.}. For the diagram
\begin{equation}
\label{Jarka je porad smutna.}
\xymatrix@C=.8em{
\{u,\circ\} \ar[rr]^h \ar[rd]_{gh}  & & \{x,y,\circ\} \ar[ld]^g
\\
&\{v,\circ\}
}
\end{equation}
with $h(u) := y$, $g(x) = g(y) := v$, the map $\rho : \mho \to \Omega$
is the inclusion $\{\circ\} \subsetneqq \{x,\circ\}$ of a proper
subset, not an isomorphism. 

This illustrates the importance of linearity for
bicharades:  $g(x) = g(y)$ does not imply that the (nonexistent)
difference $x-y$ belongs to the `kernel' of $g$. 
Let us compare that to the linearized version of~(\ref{Jarka je porad smutna.})
\[
\xymatrix@C=.8em{
\Span{u} \ar[rr]^{\Span h} \ar[rd]_{\Span{gh}}  & & \Span{x,y}
\ar[ld]^{\Span g}
\\
&\Span{v}
}
\]
with $\Span h (u) := y$, $\Span g (x) =  \Span g (y) :=v$. Then $\mho
= \Span {x-y}$, $\Omega = \Span x$ and $\rho : \mho \to \Omega$ is the
  isomorphism given by $\rho(x-y) := x$.
\end{example}

Bicharadic categories admit the following bivariant version of
Kapranov's charades~\cite[Definition~3.2]{kapranov:langlands}. 

\begin{definition}
\label{Kousek od Golan.}
Let $\ttQ$ be a bicharadic category.
A {\em $\ttQ$-bicharade\/} is a functor $\Ch :  \Iso \to \ttV$ from
the luff subcategory of isomorphisms of $\ttQ$ to the base monoidal category
$\ttV$,  with an action
\[
\chi_h : \Ch(T) \ot \Ch(F) \longrightarrow \Ch(D)  \ot \Ch(S)
\]
defined for each morphism $F\ \fib S \stackrel h \longrightarrow T
\cof \ D$, 
compatible with the isomorphisms~(\ref{Dnes jsem
  byl s Jarkou na elektrine.}) in the sense that the induced diagram
\begin{subequations}
\begin{equation}
\label{Nastartuje mi Greta?}
\xymatrix{
\Ch(T') \ot \Ch(F') \ar[r]^{\chi_{h'}}
\ar[d]_{\Ch(t) \ot \Ch(s_t)}  & \Ch(D')  \ot \Ch(S')\ar[d]^{\Ch(t_s) \ot \Ch(s)} 
\\
\Ch(T'') \ot \Ch(F'') \ar[r]^{\chi_{h''}}  & \Ch(D'')  \ot \Ch(S'')
}
\end{equation}
commutes. We also require the commutativity of
\begin{equation}
\label{Zitra odjizdime na Vanoce do Mercina.}
\xymatrix@C=3.2em{
\Flicek ZYF \ar[r]^{\id \ot \chi_{h_Z}} \ar[d]_{\chi_g \ot \id} 
& \Flicek Z\mho X   \ar@{<->}[r]^{\rm symmetry}_\cong &
\Flicek ZX\mho \ar[dd]^{\chi_{gh} \ot \Ch(\rho)}
\\  
\Flicek HTF \ar[d]_{\id \ot \chi_h}
\\
\Flicek HDS & \ar[l]_{\chi_{g^S} \ot \id}  \Flicek R\Omega S
\ar@{<->}[r]^{\rm symmetry}_\cong  & \Flicek RS\Omega
}
\end{equation}
\end{subequations}
for each diagram~(\ref{Pujdeme spolu na demonstraci?}) and the
isomorphism $\rho: \mho \to \Omega$ assumed in
Definition~\ref{Prinutim se dnes k behu?}. 
\end{definition}

\begin{example}
The terminal category $\odot$ is strict bicharadic. A
$\odot$-bicharade is an object $B \in \ttV$ with a morphism $E: B \ot B
\to B\ot B$ such that 
\[
(\id \ot E)(E \ot \id) = (E \ot \id)(\id \ot \sigma) (E \ot \id)(\id
\ot \sigma) (\id \ot E)
\]
where $\sigma : B \ot B \to B \ot B$ is the symmetry. We have no idea
where to place this object.
\end{example}

\begin{example}
The arrow category $\ArrM$ of an associative mono\"{\i}d  $M$  
considered as a category with one
object is strict bicharadic.
An\/ $\ArrM$-bicharade is a collection $B = \{B(a)\}_{a\in M}$ of objects
of $\ttV$ with structure operations $B(a)\! \ot\! B(b) \to
B(c)\! \ot\!  B(d)$ specified for any
$a,b,c,d \in M$ with  \hbox{$ab =
cd$}. An interested reader can easily
figure out what diagram~(\ref{Zitra odjizdime na Vanoce do Mercina.})
requires in this case.  We have no idea
what kind of object we described.  
\end{example}

\begin{example}
\label{A to je mam na sobe teprve podruhe.}
We will show that the determinant, i.e.\ the top  exterior power, is a
bicharade over the abelian category  $\fdV$   
of  finite-dimensional vector spaces with
the fiber-cofiber  structure of  Example~\ref{Snehulacek se tesi na
  snih.}, cf.~Proposition~\ref{Dosel mi caj, musim pit kavu.}. 
First of all, it is clear that any
isomorphism $\varpi :S' \xrightarrow \cong  S''$ in $\fdV$ induces a natural
isomorphism
\begin{subequations}
\begin{equation}
\label{Muj posledni den v Haife 2023.}
\det(\varpi) : \det(S') \stackrel \cong \longrightarrow  \det(S'')
\end{equation}
of determinants. We will also use the fact that any 
short exact sequence $S'' \hookrightarrow S \epi S'$ in $\fdV$ 
induces a natural isomorphism
\begin{equation}
\label{Za chvili sraz s Antonem.}
\det(S') \ot \det(S'') \stackrel \cong\longrightarrow \det(S),
\end{equation}  
cf.~\cite[pages 122--123]{kapranov:langlands}. In particular, we have
natural a isomorphism
\begin{equation}
\label{Za chvili volam jestli pojedeme.}
\det(S' \oplus S'') \cong \det(S') \ot \det(S'') 
\end{equation}
\end{subequations}
related to $S'' \hookrightarrow S'' \oplus S' \epi S'$.

An arbitrary diagram $\Ker(h) \hookrightarrow S \Xarrow h T \epi \Cok(h)$ 
in $\fdV$ extends to the exact sequence at the
bottom of the diagram
\begin{equation}
\label{Teplacky se mi uz trhaji.}
\xymatrix{
F:=\Ker (h)\ \ \ar@{>->}[r] & \ S \ar@/^1.5em/[rr]^h 
\ar@{->>}[r] &\ \Im(h) \ \
\ar@{>->}[r] &T 
\ar@{->>}[r]& \ D=:\Cok (h)}
\end{equation}
which gives rise to two isomorphism as in~(\ref{Za chvili sraz s
  Antonem.}), namely
\[
{}^\bullet\! h :  \det\big(\Im(h)\big) \ot \det(F)   \stackrel
\cong\longrightarrow \det(S)
\ \hbox { and } \ 
h^\bullet :\det(D) \ot  \det\big(\Im(h)\big)  \stackrel \cong\longrightarrow
\det(T).
\]
We then define 
\begin{equation}
\label{Odpoledne jedeme do Mercina.}
\chi_h :\det(T) \ot \det(F) \longrightarrow \det(D) \ot \det(S)
\end{equation}
via the span of isomorphisms
\[
\xymatrix@C=-1em{
& \det(D) \ot \det\big(\Im(h)\big) \ot \det(F)
\ar[ld]_{h^\bullet \ot \id}^\cong \ar[rd]^{\id \ot {}^\bullet\!h}_\cong
\\
\det(T) \ot \det(F)  \ar[rr]^{\chi_h}_\cong     && \det(D) \ot \det(S).
}
\]

We claim that the isomorphisms~(\ref{Muj posledni den v Haife 2023.})
together with structure operations~(\ref{Odpoledne jedeme do
  Mercina.})   make the collection $\det = \{\det(S)\}_{S \in \fdV}$ a
$\fdV$-bicharade. While the commutativity of~(\ref{Nastartuje mi
  Greta?}) is immediate, the commutativity 
of~(\ref{Zitra odjizdime na Vanoce do Mercina.}) requires some
attention.

By elementary linear algebra,  each diagram $S \xrightarrow h T
\xrightarrow g Z$ is isomorphic, in the category of diagrams, to the `standard'
diagram with 
\begin{equation}
\label{Dnes vyzvednu Jarku z Mercina.}
S = \alpha \oplus \beta \oplus \gamma,\ T = \beta \oplus \gamma \oplus
\phi \oplus \psi \ \hbox { and } \ Z = \gamma \oplus \phi \oplus \varsigma
\end{equation} 
for some $\alpha,\beta, \gamma, \phi, \psi, \varsigma \in \fdV$ such that
\[
h(\alpha) = 0, \
h|_{\beta\oplus \gamma} = \id_{\beta\oplus \gamma},\
g(\beta \oplus \psi) = 0\ \hbox { and } \ 
g|_{\gamma\oplus \phi} = \id_{\gamma\oplus \phi},
\]
cf.~the schematic Figure~\ref{Barak bude vymrzly.}.
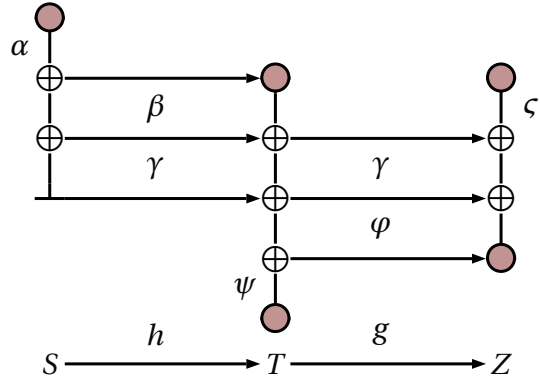
\begin{figure} 
  \centering
  \psscalebox{1.0 1.0} 
{
\begin{pspicture}(0,-2.37)(6.995,2.37)
\definecolor{colour0}{rgb}{0.043137256,0.02745098,0.02745098}
\definecolor{colour1}{rgb}{0.7607843,0.5764706,0.5764706}
\rput(0.5,-2.25){\textcolor{colour0}{$S$}}
\rput(3.5,-2.25){\textcolor{colour0}{$T$}}
\rput(6.5,-2.25){\textcolor{colour0}{$Z$}}
\rput(1.9,-1.85){\textcolor{colour0}{$h$}}
\rput(4.9,-1.85){\textcolor{colour0}{$g$}}
\rput(0.1,1.95){\textcolor{colour0}{$\alpha$}}
\rput(1.9,1.15){\textcolor{colour0}{$\beta$}}
\rput(1.9,0.35){\textcolor{colour0}{$\gamma$}}
\rput(4.9,0.35){\textcolor{colour0}{$\gamma$}}
\rput(3.1,-1.25){\textcolor{colour0}{$\psi$}}
\rput(4.9,-0.45){\textcolor{colour0}{$\phi$}}
\rput(6.9,1.15){\textcolor{colour0}{$\varsigma$}}
\psline[linecolor=black, linewidth=0.04, arrowsize=0.05291667cm 2.0,arrowlength=1.4,arrowinset=0.0]{<-}(3.3,1.55)(0.7,1.55)
\psline[linecolor=black, linewidth=0.04, arrowsize=0.05291667cm 2.0,arrowlength=1.4,arrowinset=0.0]{<-}(3.3,0.75)(0.7,0.75)
\psline[linecolor=black, linewidth=0.04, arrowsize=0.05291667cm 2.0,arrowlength=1.4,arrowinset=0.0]{<-}(6.3,0.75)(3.7,0.75)
\psline[linecolor=black, linewidth=0.04, arrowsize=0.05291667cm 2.0,arrowlength=1.4,arrowinset=0.0]{<-}(6.3,-0.05)(3.7,-0.05)
\psline[linecolor=black, linewidth=0.04, arrowsize=0.05291667cm 2.0,arrowlength=1.4,arrowinset=0.0]{<-}(6.3,-0.85)(3.7,-0.85)
\psline[linecolor=black, linewidth=0.04, arrowsize=0.05291667cm 2.0,arrowlength=1.4,arrowinset=0.0]{<-}(3.3,-2.25)(0.7,-2.25)
\psline[linecolor=black, linewidth=0.04, arrowsize=0.05291667cm 2.0,arrowlength=1.4,arrowinset=0.0]{<-}(6.3,-2.25)(3.7,-2.25)
\psline[linecolor=black, linewidth=0.04](6.5,0.95)(6.5,1.55)
\psline[linecolor=black, linewidth=0.04](6.3,1.55)(6.7,1.55)
\psline[linecolor=black, linewidth=0.04](3.3,-1.65)(3.7,-1.65)
\psline[linecolor=black, linewidth=0.04](0.3,2.35)(0.7,2.35)
\psline[linecolor=black, linewidth=0.04](3.5,1.35)(3.5,0.95)
\psline[linecolor=black, linewidth=0.04](3.5,-0.25)(3.5,-0.65)
\rput(3.5,0.75){$\bigoplus$}
\rput(3.5,-0.85){$\bigoplus$}
\rput(6.5,0.75){$\bigoplus$}
\rput(6.5,-0.05){$\bigoplus$}
\rput(0.5,0.75){$\bigoplus$}
\rput(0.5,1.55){$\bigoplus$}
\rput(3.5,-0.05){$\bigoplus$}
\psline[linecolor=black, linewidth=0.04](0.5,2.35)(0.5,1.75)
\psline[linecolor=black, linewidth=0.04](0.5,1.35)(0.5,0.95)
\psline[linecolor=black, linewidth=0.04](0.5,0.55)(0.5,0.15)
\psline[linecolor=black, linewidth=0.04](3.5,0.55)(3.5,0.15)
\psline[linecolor=black, linewidth=0.04](6.5,0.55)(6.5,0.15)
\psline[linecolor=black, linewidth=0.04](6.5,-0.25)(6.5,-0.65)
\psline[linecolor=black, linewidth=0.04](3.5,-1.05)(3.5,-1.65)
\psline[linecolor=black, linewidth=0.04, arrowsize=0.05291667cm 2.0,arrowlength=1.4,arrowinset=0.0]{<-}(3.3,-0.05)(0.7,-0.05)
\psline[linecolor=black, linewidth=0.04](3.3,1.55)(3.7,1.55)
\psline[linecolor=black, linewidth=0.04](3.5,1.55)(3.5,1.35)
\psline[linecolor=black, linewidth=0.04](6.3,-0.85)(6.7,-0.85)
\psline[linecolor=black, linewidth=0.04](6.5,-0.65)(6.5,-0.85)
\psline[linecolor=black, linewidth=0.04](0.3,-0.05)(0.7,-0.05)
\psline[linecolor=black, linewidth=0.04](0.5,0.15)(0.5,-0.05)
\psellipse[linecolor=black, linewidth=0.04, fillstyle=solid,fillcolor=colour1, dimen=outer](3.5,-1.64)(0.2,0.2)
\psellipse[linecolor=black, linewidth=0.04, fillstyle=solid,fillcolor=colour1, dimen=outer](6.5,-0.84)(0.2,0.2)
\psellipse[linecolor=black, linewidth=0.04, fillstyle=solid,fillcolor=colour1, dimen=outer](6.5,1.56)(0.2,0.2)
\psellipse[linecolor=black, linewidth=0.04, fillstyle=solid,fillcolor=colour1, dimen=outer](3.5,1.56)(0.2,0.2)
\psellipse[linecolor=black, linewidth=0.04, fillstyle=solid,fillcolor=colour1, dimen=outer](0.5,2.36)(0.2,0.2)
\end{pspicture}
}
\caption{\label{Barak bude vymrzly.}
The standard form of $S \xrightarrow h T \xrightarrow g Z$.}
\end{figure}
For the vector spaces in~(\ref{Zitra odjizdime na Vanoce do Mercina.})
we have, besides~(\ref{Dnes vyzvednu Jarku z Mercina.}), canonical isomorphisms
\[
X \cong \alpha \oplus \beta,\ Y \cong \gamma \oplus \psi,\
F \cong \alpha,\
\mho \cong \Omega \cong \psi,\
H \cong \varsigma,\
D \cong \phi \oplus \psi \  \hbox { and } \
R \cong \phi\oplus \varsigma.
\]
Using the above isomorphisms we conclude that the spaces at
the  boundary of the diagram 
\[
\xymatrix{
\ar@{<->}[d]^\relax
\Mikinka ZYF \ar@{<->}[r]^\relax & \Mikinka Z\mho 
X  \ar@{<->}[r]^\relax & \Mikinka ZX\mho \ar@{<->}[dd]^\relax
\\
\ar@{<->}[d]^\relax \ar@{<->}[r]^\relax
\Mikinka HTF & 
\ar@{<->}[ru]^\relax \ar@{<->}[rd]^\relax
\ar@{<->}[d]^\relax  \ar@{<->}[u]^\relax
\ar@{<->}[ul]^\relax \ar@{<->}[dl]^\relax
\alpha \oplus \beta \oplus \gamma \oplus \phi \oplus
\psi \oplus \varsigma
\\
\ar@{<->}[r]^\relax
\Mikinka HDS & \ar@{<->}[r]^\relax \Mikinka R\Omega S & \Mikinka RS\Omega 
}
\]
are canonically isomorphic to the space in the center, so that the
triangles and thus also the boundary rectangle commutes. Applying
$\det(-)$ on the boundary rectangle and invoking the iterated~\eqref{Za chvili
  volam jestli pojedeme.} we verify the commutativity of
\[
\xymatrix{
\ar@{<->}[d]^\relax
\Alicek ZYF \ar@{<->}[r]^\relax & \Alicek Z\mho 
X  \ar@{<->}[r]^\relax & \Alicek ZX\mho \ar@{<->}[dd]^\relax
\\
\ar@{<->}[d]^\relax
\Alicek HTF & 
\\
\ar@{<->}[r]^\relax
\Alicek HDS & \ar@{<->}[r]^\relax \Alicek R\Omega S & \Alicek RS\Omega 
}
\]
which is~\eqref{Zitra odjizdime na Vanoce do Mercina.} for the
determinant. 

This proves the commutativity of~\eqref{Zitra odjizdime na Vanoce do
  Mercina.} for diagrams $S \xrightarrow h T \xrightarrow g Z$ in the standard
form. Since the isomorphism
\[
\xymatrix@R=1.5em{S' \ar[r]^{h'} \ar@{<->}[d]_\cong  & T'
  \ar@{<->}[d]_\cong\ar[r]^{g'} & Z' \ar@{<->}[d]^\cong
\\
S'' \ar[r]^{h''} & T'' \ar[r]^{g''} & Z''
}
\]
of diagrams induces compatible isomorphisms of the objects featured 
in~\eqref{Pujdeme spolu na demonstraci?}, and since each diagram $S
\xrightarrow h T \xrightarrow g Z$ is isomorphic to a diagram in the
standard form, this establishes the commutativity 
of~\eqref{Zitra odjizdime na Vanoce do
  Mercina.} for an arbitrary diagram by~(\ref{Nastartuje mi Greta?}). 
We believe there is a smarter proof not using the standard forms, 
but we have not been able to find it.
\end{example}

\begin{example}
In this example we use the material
of~\cite[\S3.3]{kapranov:langlands}. Let $\qfdV$ denote the category
of finite-dimensional vector spaces over a finite field ${\mathbb
  F}$. The Tits building of an \hbox{$n$-dimensional}
vector space $S \in \qfdV$ is the 
simplicial set $B_\bullet(S)$  
whose $m$-simplices are flags of subspaces $S_0 \subset
\cdots \subset S_m \subset S$ such that either $S_0 \not= \{0\}$ or
$S_m \not= S$. It is known that the homology $H_i
(B_\bullet(S);{\mathbb k})$ with coefficients in a field $\mathbb k$
is zero for $i \not= 0,n\!-\!1$. The space $\St(S) : =H_{n-1}
(B_\bullet(S);{\mathbb k})$ is the {\/\em Steinberg module\/} of $S$.

Trusting in~\cite[page~132]{kapranov:langlands}, each short exact sequence
$S' \hookrightarrow S \epi S''$ in  $\qfdV$ induces a natural map
\[
\mu _{S'SS''} : \St(S') \ot \St(S'') \longrightarrow  \St(S) .
\]
Mimicking the methods of Example~\ref{A to je mam na sobe teprve podruhe.} 
we get, for each morphism $h$ as
in~(\ref{Teplacky se mi uz trhaji.}), the span
\[
\xymatrix@C=-1em{
& \St(D) \ot \St\big(\Im(h)\big) \ot \St(F)
\ar[ld]_{h^\bullet \ot \id}^\relax \ar[rd]^{\id \ot {}^\bullet\!h}_\relax
\\
\St(T) \ot \St(F)     && \St(D) \ot \St(S).
}
\]
Since neither ${}^\bullet\!h$ nor $h^\bullet$ are isomorphisms in
general, the above diagrams  interpreted as operations 
\[
\chi_h :\St(T) \ot \St(F) \longrightarrow \St(D) \ot \St(S)
\]
can make  $\St = \{\St(S)\}_{S \in 
\qfdV}$ only a $\qfdV$-bicharade in 
the category of spans in $\fdV$.
\end{example}

\section{General dioperadic categories}
\label{Vsechno je spatne.}

\lettrine{\color{red} W} {\ e} focus again on
a pair $\ttQ = (\ttQ_L,\ttQ_R)$ of two
operadic categories such that $\ttQ$ is the underlying category of~$\ttQ_L$,
and the underlying category of  $\ttQ_R$ is the opposite 
category~$\ttQ^\dagger$. If~both $\ttQ_L$ and $\ttQ_R$ are unary,
we are in the context of Definition~\ref{Musim se
  objednat na endokrinologii za mesic.}. Although the structures
discussed below are
straightforward generalizations of the corresponding ones in 
Section~\ref{Jsem dnes nejak bez
  energie.}, several new important examples will occur.

\subsection{Dioperadic categories -- definition and examples}
Recall from~\cite[page 1623]{duodel} that each  operadic category
comes with a
cardinality functor~$|\dash|$
to the skeletal category  $\sFSet$  of
finite sets.  Objects of  $\sFSet$
are linearly ordered sets $[n] :=\{1,\ldots,n\}$,
$n\in \bbN$, and morphisms are arbitrary maps between these sets.
In the above situation we thus
have two cardinality functors,  $|\dash|_L:\ttQ\to \sFSet$ and $|\dash|_R:\ttQ\to
\sFSet^\dagger$,  given by the operadic
structures of $\ttQ_L$ and $\ttQ_R$, which together form
the  {\em bicardinality \/}  functor
\[
\bic \dash : \ttQ \longrightarrow \sFSet \times
\sFSet^\dagger.
\]
The latter category will be used so often that we
introduce a condensed notation
\[
\d:=
\sFSet \times \sFSet^\dagger .
\] 
The objects of $\dFin$ are pairs $([m],[n])$ of finite ordinals; we will
sometimes use the
shorter notation $(m,n)$ instead of  $([m],[n])$.

Assuming $\bic S = (a,m)$ and $\bic T
= (n,b)$, a morphism $h : S \to T$ of $\ttQ$ 
has  $n$ fibers $F_i$ and $m$ cofibers~$D_j$, $i \in [n]$,
$j \in [m]$, which we express by
\begin{equation}
\label{Dnes bude cely den prset.}
\Rada F1n \ \fib S \stackrel h\longrightarrow T \cof \ \Rada D1m. 
\end{equation}
We  will sometimes denote the $i$th fiber $F_i$ of $h$ in $\ttQ_L$
by $\inv {\redukce{{$\vec h$}}} (i)$   
and the $j$th cofiber $D_j$ of
$h$, i.e.\ the $j$th fiber of $h^\dagger$ in $\ttQ_R$, by 
$\inv{\redukce{$\cev h$}}(j)$. We emphasize that here neither $\vec h$ or $\cev h$
are actual maps, the arrows only  distinguish between fibers
and cofibers.

\begin{definition}
\label{Musim se objednat na endokrinologii.}
We say that  $\ttQ = (\ttQ_L,\ttQ_R)$ is {\/\em left dioperadic}
if $\ttQ_L$ is perfect.
Dually, $\ttQ$ is {\/\em right dioperadic\/} if 
$\ttQ_R$ is perfect. Finally, $\ttQ$
is {\/\em dioperadic\/} if both $\ttQ_L$ and $\ttQ_R$ are
perfect. As in the unary case, we denote by $\ttQ_F$ resp.~$\ttQ_C$
the fiber, resp.~the cofiber subcategory.
\end{definition}

\begin{example}
\label{Grete prokluzuje spojka.}
The archetype of a dioperadic category is $\d$ itself, with
the identity automorphism playing the r\^ole of the bicardinality.  
 A morphism 
$\phi : (a,m) \to (n,b)$ is a pair
$\phi = (\vec \phi, \cev \phi)$ of arbitrary maps 
$\vec \phi : [a] \to [n]$ and $\cev \phi : [b] \to [m]$ of finite sets.
The $i$th fiber of
$\phi$
is  the pair $\big(\inv{\vec \phi}(i),1$\big) and the $j$th cofiber the pair
$\big(1,\inv{\cev \phi}(j)\big)$, for $i \in [n]$, $j \in [m]$. 
Here $\inv{\vec
  \phi}(i)$ is the $i$th fiber of the map $\vec \phi : [a] \to [n]$ in
$\sFSet$ defined, as in~\cite[Section~1]{duodel},
to be the pullback of $\vec \phi$ along the map $[1] \to [n]$ which picks
up $i \in [n]$:
\begin{equation}
\label{Zase tady nekdo neco hrabe!}
\xymatrix@C=1.5em@R=.1em{\inv{\vec \phi}(i)\ar[dd] \ar[rr]^{I_i} && [a] \ar[dd]^{\vec \phi}
\\
&\hbox{\raisebox{1.5em}{\Huge \hskip -2.3em  $\lrcorner$}}
\\
[1] \ar[rr]^{1 \ \longmapsto \ i}     &&\ [n]. 
}
\end{equation}
The fiber $\inv{\vec \phi}(i)$ is unique by the skeletality of
$\sFSet$.  The expression $\inv{\cev
  \phi}(j)$ has the obvious similar meaning. It is easy to verify that 
\begin{equation}
\label{Zitra pojedu na Technion.}
\left(\d\right)_F \cong \left(\d\right)_C \cong \sFSet.
\end{equation}
\end{example}
 
\begin{example}
\label{Zalije mi Jarka jeste jednou kyticky?}
The subtle combinatorics of dioperadic categories is nicely illustrated by
the category $\dBq$  of bibouquets, which is a `colored' version of
the dioperadic category $\d$ and simultaneously a bivariant version of
the operadic category of 
$\frakC$-bouquets in~\cite[Example~1.7]{duodel}. 
Given  a set of `colors' $\frakC$,
a {\em $\frakC$-bibouquet\/} is
a pair of maps
$B=   (\beta,\Ateb) : [n] \times [m] \to  \frakC \times \frakC$, $n,m
\in \bbN$, represented by the `fraction'
\begin{equation}
\label{Byl jsem v Zoo.}
\bez {\Rada u1n}{\rule {0pt}{.8em}{\Rada v1m}},\
u_i:= \beta(i),\ v_j := \Ateb(j), \ \hbox{\  $i \in [n]$, $j \in [m]$},
\end{equation}
of ordered lists of colors. The biarity of $B$ is $(m,n)$. 
The symbol in~(\ref{Byl jsem v Zoo.}) can be visualized as a~directed
corolla with $m$ inputs labelled $\Rada v1m$ and $n$ outputs
labelled $\Rada u1n$, as in
\[
\psscalebox{1.0 1.0} 
{
\begin{pspicture}(0,-1.8)(2.945,1.8)
\psline[linecolor=black, linewidth=0.04, arrowsize=0.05291667cm 2.0,arrowlength=2.5,arrowinset=0.0]{<-}(0.155,1.2)(1.355,0.0)
\psline[linecolor=black, linewidth=0.04, arrowsize=0.05291667cm 2.0,arrowlength=2.5,arrowinset=0.0]{<-}(0.555,1.2)(1.355,0.0)
\psline[linecolor=black, linewidth=0.04, arrowsize=0.05291667cm 2.0,arrowlength=2.5,arrowinset=0.0]{<-}(0.955,1.2)(1.355,0.0)
\psline[linecolor=black, linewidth=0.04, arrowsize=0.05291667cm 2.0,arrowlength=2.5,arrowinset=0.0]{<-}(2.555,1.2)(1.355,0.0)
\rput[b](-0.1,1.3){$u_1$}
\rput[b](0.4,1.3){$u_2$}
\rput[b](0.955,1.3){$u_3$}
\rput[b](2.725,1.3){$u_n$}
\rput[tl](-.1,-1.3){$v_1$}
\rput[tl](0.4,-1.3){$v_2$}
\rput[t](1.05,-1.3){$v_3$}
\rput[tl](2.555,-1.3){$v_m$}
\psline[linecolor=black, linewidth=0.04, arrowsize=0.05291667cm 2.0,arrowlength=2.5,arrowinset=0.0]{<-}(1.355,0.0)(0.155,-1.2)
\psline[linecolor=black, linewidth=0.04, arrowsize=0.05291667cm 2.0,arrowlength=2.5,arrowinset=0.0]{<-}(1.355,0.0)(0.555,-1.2)
\psline[linecolor=black, linewidth=0.04, arrowsize=0.05291667cm 2.0,arrowlength=2.5,arrowinset=0.0]{<-}(1.355,0.0)(0.955,-1.2)
\psline[linecolor=black, linewidth=0.04, arrowsize=0.05291667cm 2.0,arrowlength=2.5,arrowinset=0.0]{<-}(1.355,0.0)(2.555,-1.2)
\rput[b](1.8,1.3){$\cdots$}
\rput[t](1.8,-1.3){$\cdots$}
\psline[linecolor=black, linewidth=0.04](0.555,0.0)(2.155,0.0)
\rput[r](0.3,0){\tt s\ae ptum}
\rput[l](2.4,0){\tt mutp\ae s}
\end{pspicture}
}
\] 

Let $B'=   (\beta',\Ateb') : [a] \times [m] \to  \frakC \times \frakC$ and
$B''=   (\beta'',\Ateb'') : [n] \times [b] \to  \frakC \times \frakC$
be bibouquets. Morphisms $\Phi : B' \to B''$ are arbitrary maps
$\phi = (\vec \phi,\cev \phi) : (a,m) \to (n,b)$ in $\d$.
The $i$th fiber of $\Phi$ is the bibouquet
\[
\inv{\vec \Phi}(i): = \big(\beta' I_i,1 \mapsto \Ateb(i)\big) :  \inv{\vec \phi}(i) \times
[1] \to  \frakC \times \frakC, \ i \in [n],
\]
where $I_i$ is the upper horizontal morphism in~(\ref{Zase tady nekdo
  neco hrabe!}). The cofibers are defined similarly. For example, 
for $\phi = (\vec \phi,\cev \phi) : (4,3) \to (2,3)$ given by
\[
\vec \phi(1) = \vec \phi(3) :=2,\
\vec \phi(2) = \vec \phi(4) :=1,\
\cev \phi(1) : =3, \ \cev \phi(2) = \cev \phi(3) :=2,
\]
the fiber-cofiber diagram reads
\[
\xymatrix{
\bez {u'_2,u'_4}{\rule {0pt}{.8em}{u''_1}}\ ,
\bez {u'_1,u'_3}{\rule {0pt}{.8em}{u''_2}}\
\fib\
\bez {u'_1,u'_2,u'_3,u'_4}{\rule {0pt}{.8em}{v'_1,v'_2,v'_3}}\
\ar[rr]^\Phi
& & 
\bez {u''_1,u''_2}{\rule {0pt}{.8em}{v''_1,v''_2,v''_3}}\
\cof \
\bez {v'_1}{\rule {0pt}{.8em}{\emptyset}}\ ,
\bez {v'_2}{\rule {0pt}{.8em}{v''_2,v''_3}}\ ,
\bez {v'_3}{\rule {0pt}{.8em}{v''_1}} \ .
}
\]

The fiber subcategory  $\dBq_F$  consists of bibouquets of the form
$B=   (\beta,\Ateb) : [n] \times [1] \to  \frakC \times \frakC$, $n
\in \bbN$. Notice that $\Ateb$ just picks an element of $\frakC$, the
`root color' of $B$. The maps of bibouquets $B' \to B''$ in $\dBq_F$
whose root colors
coincide are arbitrary maps of $\dBq$, otherwise there is no map
between $B'$ and $B''$. 
We recognize  $\dBq_F$ as the operadic category $\Bq(\frakC)$ of
$\frakC$-bouquets from~\cite[Example~1.7]{duodel}, with the maps
existing only between `flowers'
\[
\psscalebox{1.0 1.0} 
{
\begin{pspicture}(0,-1.81)(2.36,1.81)
\psline[linecolor=black, linewidth=0.04](0.2,1.21)(1.2,0.01)
\psline[linecolor=black, linewidth=0.04](1.2,0.01)(2.0,1.21)
\psline[linecolor=black, linewidth=0.04](1.2,0.01)(1.2,-0.79)
\rput[bl](-0.15,1.35){$u_1$}
\rput[bl](0.3,1.59){$u_2$}
\rput[bl](.9,1.61){$u_3$}
\rput[bl](2.0,1.35){$u_n$}
\psline[linecolor=black, linewidth=0.04](0.4,-0.79)(2.0,-0.79)(2.0,-0.99)(1.8,-1.79)(0.6,-1.79)(0.4,-0.99)(0.4,-0.79)
\rput[bl](0.5,-1.1){\scriptsize root collor}
\psline[linecolor=black, linewidth=0.04](1.2,0.01)(0.6,1.41)
\psline[linecolor=black, linewidth=0.04](1.2,0.01)(1.0,1.55)
\rput[bl](1.4,1.21){$\cdots$}
\end{pspicture}
}
\]
with identical pots.  The cofiber category $\dBq_C$ is
isomorphic to $\Bq(\frakC)$ too. 
Notice that the fiber subcategory  $\dBq_F$ is the proper subcategory of the full
subcategory of  $\dBq$ of objects of 
bicardinalities $(n,1)$, $n \in \bbN$, and that 
$\dBq$ is not the product  $\dBq_F \times \dBq_F^\dagger$. 
\end{example}

\begin{example}
\label{Pobezim po strese?}
Let $\ttO$ be a right unital operadic category with a family of local terminal
objects~\eqref{Jsem posledni den v Cuernavace.}.  Then  $\tO =
(\tO_L,\tO_R)$,  with $\tO_L := \ttO$ and
$\tO_R$ the unary operadic category 
of Example~\ref{Za chvili na vlak.},
is dioperadic. Indeed, $\tO_L$ is right unital, thus perfect by
Proposition~\ref{Jarka opet nemocna.}, with $\tO_F = \ttO$. 
By Example~\ref{Dominik mel skvelou myslenku.}, also $\tO_C$ is
perfect, with $\tO_F$ the discrete category with objects $U_c,\  c
\in \pi_0(\ttO)$. 
\end{example}

\begin{example}[due to M.~Batanin]
\label{Super tautological}
Let $\ttO$ be a perfect operadic category. Consider $\cO =
(\cO_L,\cO_R)$ with $\cO_L := \ttO$ and $\cO_R$ the category opposite
to the underlying category of $\ttO$, with the cardinality defined by
$|T|_R :=  [0]$ for each $T$. 
Then $\cO$ is dioperadic, with $\cO_F = \ttO_F$ and
$\cO_C$ the empty~category.
\end{example}

\begin{example}
The d\'ecalage $\sfD(\ttC)$ of a category $\ttC$ is a unital operadic category. 
The construction of Example~\ref{Pobezim po strese?} leads to the
dioperadic category $\widehat{\sfD}(\ttA)$ with
$\widehat{\sfD}(\ttA)_F = \sfD(\ttC)$ and $\widehat{\sfD}(\ttA)_C
\cong \ttC_{\rm disc}$, the discrete category with objects of $\ttC$.
\end{example}

\subsection{Operads, cooperads and bimodules}
This subsection contains generalizations 
of the notions introduced in Subsection~\ref{Nemam se cim chlubit.}.
The only, but essential, conceptual difference is that the 
translation of Definition~\ref{Musim se objednat na ocni.} 
to the form in Example~\ref{Poletim do Izraele?}
does not make much sense if objects of cardinalities different from
$[1]$ occur, cf.~Remark~\ref{zase} below.

\begin{definition}
\label{Je tam jen 11 stupnu!}
\begin{subequations}
A {\em $\ttQ$-operad} over a left dioperadic category $\ttQ$ is a collection $\oP =
\{\oP(T)\}_{T \in \ttQ_F}$ of objects of a symmetric monoidal category
$\ttV$,
with structure operations
\begin{equation}
\label{Vecer pujdu na Brezanku}
\gamma_h:   \oP(T) \ot \oP(F_1) \ot \cdots \ot \oP(F_n)
\longrightarrow \oP(S), 
\end{equation}
specified for an arbitrary morphism $\Rada F1n \ \fib S \xrightarrow h T$ in
$\ttQ_F$, and satisfying the
associativity axiom in item~(i) of~\cite[Definition~1.11]{duodel}.

Dually, a {\em $\ttQ$-cooperad \/} over a right dioperadic category $\ttQ$ is a
collection $\cooP =
\{\cooP(T)\}_{T \in \ttQ_C}$ of objects of $\ttV$ with structure
operations 
\begin{equation}
\label{a tam jako abstinent budu jen tak koukat.}
\delta_h : \cooP(T) \longrightarrow \cooP(D_1) \ot
\cdots \ot \cooP(D_m) \ot \cooP(S)
\end{equation}
that are given for any morphism 
$S \xrightarrow h T \cof \ \Rada D1m$ 
in $\ttQ_C$, and that satisfy the
obvious dual form of the associativity diagram in item~(i) 
of~\cite[Definition~1.11]{duodel}.
\end{subequations}
\end{definition}

\begin{example}[due to M.~Batanin]
\label{Dva dny za sebou.}
The `dioperadic envelope' $\ttO \mapsto \cO$ constructed in
Example~\ref{Super tautological} defines a full and faithful embedding of the
category of perfect operadic categories to the category of dioperadic
categories. The category of $\ttO_F$-operads is isomorphic to
the category of $\cO$-operads.
\end{example}

Inspired by the shorthand used in~\cite[Definition~1.1]{duodel} we
introduce,
for $h :S \to T$ as in~\eqref{Dnes bude cely den prset.}  
and a $\ttQ$-operad $\oP$ over a left dioperadic
category $\ttQ$, the notation
\begin{subequations}
\begin{equation}
\label{Tuto sobotu jedu}
\oP(h) := \oP(F_1) \ot \cdots \ot \oP(F_n).
\end{equation}
If $\ttQ$ is right dioperadic and $\cooP$ a $\ttQ$-cooperad, we
similarly denote
\begin{equation}
\label{za Andulkou do Berlina.}
\cooP(h) := \cooP(D_1) \ot \cdots \ot \cooP(D_m).
\end{equation} 
\end{subequations}
With this shorthand, the operations in~(\ref{Vecer pujdu na Brezanku})
and~(\ref{a tam jako abstinent budu jen tak koukat.}) assume a nice
concise form
\[
\gamma_h:   \oP(T) \ot \oP(h) \longrightarrow \oP(S)
\ \hbox { and } \
\delta_h :  \cooP(T) \longrightarrow \cooP(h) \ot \cooP(S).
\]

\begin{definition}
\label{Dnes jsem byl s Jarkou na obede.}
Let $\ttQ$ be a dioperadic category, $\oP$
a $\ttQ_F$-operad and $\cooP$ a $\ttQ_C$-cooperad.
A {\/\em {\CP}-bimodule \/} is a collection $\oM = \{\oM(S)\}_{S
  \in \ttV}$ of objects of $\ttV$ with structure morphisms
\begin{equation}
\label{Byly Dusicky.}
\omega_h :  \oM(T)  \ot \oL(F_1) \ot \cdots \ot \oL(F_n) 
\longrightarrow
\cooP(D_1) \ot \cdots \ot \cooP(D_m) \ot  \oM(S)
\end{equation}
given for each $\Rada F1n \ \fib S \xrightarrow h T \cof \ \Rada D1m$. 
 
We moreover require a compatibility between this action
and the (co)operad structures of $\oL$ and $\cooP$. 
Namely, let
\[
\xymatrix{S\ar[rr]^h \ar[dr]_{gh}   &&T \ar[ld]^g
\\
&Z
}
\]
be a commutative diagram in $\ttQ$. 
Both $g$ and $gh$ have $a := |Z|_L$ fibers; denote by
$\Rada h1a$ the maps between them induced by $h$. 
Likewise $h$ and $gh$ have $b := |S|_R$ cofibers;
denote by $\rada{g^1}{g^b}$ the maps between them
induced by $g$. We demand the commutativity of the~diagram
\[
\xymatrix{
\oM(Z) \ot \oP(g) \ot \oP(h_1) \ot \cdots \ot \oP(h_{a})
\ar[r]^(.62){\id \ot \gamma_{h_1} \ot \cdots \ot \gamma_{h_{a}}}
\ar[dd]_{\omega_g \ot \id}
&
\oM(Z) \ot \oP(gh)\ar[d]^{\omega_{gh}}
\\
& \cooP(gh) \ot \oM(S) \ar[d]^{\delta_{g^1} \ot \cdots \ot
  \delta_{g^{b}} \ot \id}
\\
\cooP(g) \ot \oM(T) \ot \oP(h) \ar[r]^(.42){\id \ot \omega_h}
&
\cooP(g^1) \ot \cdots \ot \cooP(g^{b}) \ot \cooP(h) \ot \oM(S)
}
\]
which uses the shorthand~(\ref{Tuto sobotu jedu})--(\ref{za
  Andulkou do Berlina.}) and  implicitly assumes the equality of the set
of the (co)fibers of
a map and the set of the (co)fibers of the induced map between these
(co)fibers required by Axiom (iv) of the
definition of an operadic category in~\cite[page~1634]{duodel}.
The structure morphism~(\ref{Byly Dusicky.}) is symbolized by the
`flow diagram' in Figure~\ref{Porad mne pali ty ruce.}.
\end{definition}

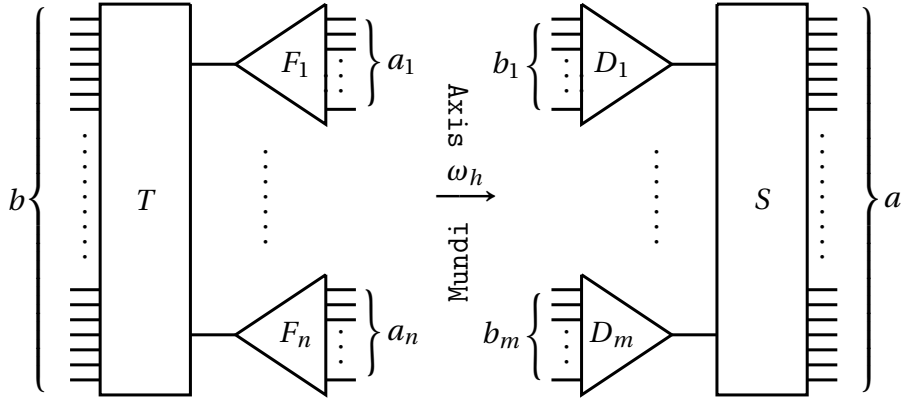
\begin{figure}
\centering
\psscalebox{1.0 1.0} 
{
\begin{pspicture}(0,-2.6286852)(11.7,2.6286852)
\psline[linecolor=black, linewidth=0.04](7.54,2.4086852)(7.14,2.4086852)
\psline[linecolor=black, linewidth=0.04](7.54,2.2086854)(7.14,2.2086854)
\psline[linecolor=black, linewidth=0.04](7.54,2.0086854)(7.14,2.0086854)
\psline[linecolor=black, linewidth=0.04](7.54,1.2086853)(7.14,1.2086853)
\psline[linecolor=black, linewidth=0.04](7.54,-1.1913147)(7.14,-1.1913147)
\psline[linecolor=black, linewidth=0.04](7.54,-1.3913147)(7.14,-1.3913147)
\psline[linecolor=black, linewidth=0.04](7.54,-1.5913147)(7.14,-1.5913147)
\psline[linecolor=black, linewidth=0.04](7.54,-2.3913147)(7.14,-2.3913147)
\psline[linecolor=black, linewidth=0.04](9.34,2.6086853)(9.34,-2.5913148)(10.54,-2.5913148)(10.54,2.6086853)(9.34,2.6086853)
\psline[linecolor=black, linewidth=0.04](10.54,2.4086852)(10.94,2.4086852)
\psline[linecolor=black, linewidth=0.04](10.54,2.2086854)(10.94,2.2086854)
\psline[linecolor=black, linewidth=0.04](10.54,2.0086854)(10.94,2.0086854)
\psline[linecolor=black, linewidth=0.04](10.54,1.8086853)(10.94,1.8086853)
\psline[linecolor=black, linewidth=0.04](10.54,1.6086853)(10.94,1.6086853)
\psline[linecolor=black, linewidth=0.04](10.54,1.4086853)(10.94,1.4086853)
\psline[linecolor=black, linewidth=0.04](10.54,-1.1913147)(10.94,-1.1913147)
\psline[linecolor=black, linewidth=0.04](10.54,-1.3913147)(10.94,-1.3913147)
\psline[linecolor=black, linewidth=0.04](10.54,-1.5913147)(10.94,-1.5913147)
\psline[linecolor=black, linewidth=0.04](10.54,-1.7913147)(10.94,-1.7913147)
\psline[linecolor=black, linewidth=0.04](10.54,-1.9913146)(10.94,-1.9913146)
\psline[linecolor=black, linewidth=0.04](10.54,-2.1913147)(10.94,-2.1913147)
\psline[linecolor=black, linewidth=0.04](10.54,-2.3913147)(10.94,-2.3913147)
\psline[linecolor=black, linewidth=0.04](10.54,1.2086853)(10.94,1.2086853)
\rput(7.94,1.8086853){$D_1$}
\rput(7.94,-1.7913147){$D_m$}
\rput(9.94,0.008685303){$S$}
\rput[r](6.74,1.8086853){$b_1$}
\rput(7.54,1.6086853){$\cdot$}
\rput(7.54,1.4086853){$\cdot$}
\rput[l](11.54,0.008685303){$a$}
\rput(10.74,0.8086853){$\cdot$}
\rput(10.74,0.6086853){$\cdot$}
\rput(10.74,0.4086853){$\cdot$}
\rput(7.34,1.8086853){$\cdot$}
\rput(7.34,1.6086853){$\cdot$}
\rput(7.34,1.4086853){$\cdot$}
\rput(10.74,0.20868531){$\cdot$}
\rput(10.74,0.008685303){$\cdot$}
\rput(10.74,-0.1913147){$\cdot$}
\rput(7.34,-1.7913147){$\cdot$}
\rput(7.34,-1.9913146){$\cdot$}
\rput(7.34,-2.1913147){$\cdot$}
\rput(8.54,0.6086853){$\cdot$}
\rput(8.54,0.4086853){$\cdot$}
\rput(8.54,0.20868531){$\cdot$}
\rput(8.54,0.008685303){$\cdot$}
\rput(8.54,-0.1913147){$\cdot$}
\rput(8.54,-0.3913147){$\cdot$}
\rput(8.54,-0.5913147){$\cdot$}
\rput(10.74,-0.3913147){$\cdot$}
\rput(6.94,-1.7913147){$\left\{\rule{0em}{2em}\right.$}
\rput[r](6.74,-1.7913147){$b_m$}
\rput(6.94,1.8086853){$\left\{\rule{0em}{2em}\right.$}
\rput(11.34,0.008685303){$\left.\rule{0em}{7.2em}\right\}$}
\rput(10.74,-0.5913147){$\cdot$}
\rput(10.74,-0.7913147){$\cdot$}
\psline[linecolor=black, linewidth=0.04](4.14,2.4086852)(4.54,2.4086852)
\psline[linecolor=black, linewidth=0.04](4.14,2.2086854)(4.54,2.2086854)
\psline[linecolor=black, linewidth=0.04](4.14,2.0086854)(4.54,2.0086854)
\psline[linecolor=black, linewidth=0.04](4.14,1.2086853)(4.54,1.2086853)
\psline[linecolor=black, linewidth=0.04](4.14,-1.1913147)(4.54,-1.1913147)
\psline[linecolor=black, linewidth=0.04](4.14,-1.3913147)(4.54,-1.3913147)
\psline[linecolor=black, linewidth=0.04](4.14,-1.5913147)(4.54,-1.5913147)
\psline[linecolor=black, linewidth=0.04](4.14,-2.3913147)(4.54,-2.3913147)
\psline[linecolor=black, linewidth=0.04](2.34,2.6086853)(2.34,-2.5913148)(1.14,-2.5913148)(1.14,2.6086853)(2.34,2.6086853)
\psline[linecolor=black, linewidth=0.04](1.14,2.4086852)(0.74,2.4086852)
\psline[linecolor=black, linewidth=0.04](1.14,2.2086854)(0.74,2.2086854)
\psline[linecolor=black, linewidth=0.04](1.14,2.0086854)(0.74,2.0086854)
\psline[linecolor=black, linewidth=0.04](1.14,1.8086853)(0.74,1.8086853)
\psline[linecolor=black, linewidth=0.04](1.14,1.6086853)(0.74,1.6086853)
\psline[linecolor=black, linewidth=0.04](1.14,1.4086853)(0.74,1.4086853)
\psline[linecolor=black, linewidth=0.04](1.14,-1.1913147)(0.74,-1.1913147)
\psline[linecolor=black, linewidth=0.04](1.14,-1.3913147)(0.74,-1.3913147)
\psline[linecolor=black, linewidth=0.04](1.14,-1.5913147)(0.74,-1.5913147)
\psline[linecolor=black, linewidth=0.04](1.14,-1.7913147)(0.74,-1.7913147)
\psline[linecolor=black, linewidth=0.04](1.14,-1.9913146)(0.74,-1.9913146)
\psline[linecolor=black, linewidth=0.04](1.14,-2.1913147)(0.74,-2.1913147)
\psline[linecolor=black, linewidth=0.04](1.14,-2.3913147)(0.74,-2.3913147)
\psline[linecolor=black, linewidth=0.04](1.14,1.2086853)(0.74,1.2086853)
\rput(3.74,1.8086853){$F_1$}
\rput(3.74,-1.7913147){$F_n$}
\rput(1.74,0.008685303){$T$}
\rput[l](4.94,1.8086853){$a_1$}
\rput(4.14,1.6086853){$\cdot$}
\rput(4.14,1.4086853){$\cdot$}
\rput[r](0.14,0.008685303){$b$}
\rput(0.94,0.8086853){$\cdot$}
\rput(0.94,0.6086853){$\cdot$}
\rput(0.94,0.4086853){$\cdot$}
\rput(4.34,1.8086853){$\cdot$}
\rput(4.34,1.6086853){$\cdot$}
\rput(4.34,1.4086853){$\cdot$}
\rput(0.94,0.20868531){$\cdot$}
\rput(0.94,0.008685303){$\cdot$}
\rput(0.94,-0.1913147){$\cdot$}
\rput(4.34,-1.7913147){$\cdot$}
\rput(4.34,-1.9913146){$\cdot$}
\rput(4.34,-2.1913147){$\cdot$}
\rput(3.34,0.6086853){$\cdot$}
\rput(3.34,0.4086853){$\cdot$}
\rput(3.34,0.20868531){$\cdot$}
\rput(3.34,0.008685303){$\cdot$}
\rput(3.34,-0.1913147){$\cdot$}
\rput(3.34,-0.3913147){$\cdot$}
\rput(3.34,-0.5913147){$\cdot$}
\rput(0.94,-0.3913147){$\cdot$}
\rput[l](4.94,-1.7913147){$a_n$}
\rput{-178.4686}(9.526179,-3.4553142){\rput(4.74,-1.7913147){$\left\{\rule{0em}{2em}\right.$}}
\rput{-178.4686}(9.429971,3.7434){\rput(4.74,1.8086853){$\left\{\rule{0em}{2em}\right.$}}
\rput(0.34,0.008685303){$\left\{\rule{0em}{7.2em}\right.$}
\rput(0.94,-0.5913147){$\cdot$}
\rput(0.94,-0.7913147){$\cdot$}
\psline[linecolor=black, linewidth=0.04](7.54,2.6086853)(7.54,1.0086854)(8.74,1.8086853)(7.54,2.6086853)
\psline[linecolor=black, linewidth=0.04](7.54,-0.9913147)(7.54,-2.5913148)(8.74,-1.7913147)(7.54,-0.9913147)
\psline[linecolor=black, linewidth=0.04](8.74,1.8086853)(9.34,1.8086853)
\psline[linecolor=black, linewidth=0.04](8.74,-1.7913147)(9.34,-1.7913147)
\psline[linecolor=black, linewidth=0.04](4.14,2.6086853)(4.14,1.0086854)(2.94,1.8086853)(4.14,2.6086853)(4.14,2.6086853)
\psline[linecolor=black, linewidth=0.04](4.14,-0.9913147)(4.14,-2.5913148)(2.94,-1.7913147)(4.14,-0.9913147)(4.14,-0.9913147)
\psline[linecolor=black, linewidth=0.04](2.94,1.8086853)(2.34,1.8086853)
\psline[linecolor=black,
linewidth=0.04](2.94,-1.7913147)(2.34,-1.7913147)(2.34,-1.7913147)
\rput[b](6,-.1){\Large$\stackrel{\omega_h}\longrightarrow$}
\rput(5.9,0.38){\rotatebox{-90}{\tt \hskip -2.7em  Axis}} 
\rput(5.9,0.38){\rotatebox{90}{\tt  \hskip- 4em Mundi}}
\end{pspicture}
}
\caption{
The structure of a
bimodule. The rectangles
represent the corresponding pieces of
the bimodule, the
triangles the corresponding pieces of the operad and the cooperad. 
The equalities $a = a_1 + \cdots + a_n$ and $b = b_1 + \cdots + b_m$
following from the axioms of the operadic categories $\ttQ_L$ and
$\ttQ_R$ are assumed.
}
\label{Porad mne pali ty ruce.}
\end{figure}

\begin{remark}
\label{zase}
Assume, as in Example~\ref{Poletim do Izraele?}, that the base monoidal 
category $\ttV$ is the category of vector spaces and try to rewrite
the bimodule action~\eqref{Byly Dusicky.} 
to the form~\eqref{Koupil jsem si dve zpetna zrcatka.},
with $\oR$ the linear dual of $\cooP$. Instead of the diagram in
Figure~\ref{Porad mne pali ty ruce.} we obtain a structure sketched in
Figure~\ref{Za chvili pojedu za Jaruskou.}, where the row of triangles
on the left symbolizes the spaces $\oR(D_1),\ldots,\oR(D_m)$. 
\begin{figure}
  \centering
\psscalebox{1.0 1.0} 
{
\begin{pspicture}(0,-2.6286852)(11.88,2.6286852)
\psline[linecolor=black, linewidth=0.04](1.14,2.4086852)(0.74,2.4086852)
\psline[linecolor=black, linewidth=0.04](1.14,2.2086854)(0.74,2.2086854)
\psline[linecolor=black, linewidth=0.04](1.14,2.0086854)(0.74,2.0086854)
\psline[linecolor=black, linewidth=0.04](1.14,1.2086853)(0.74,1.2086853)
\psline[linecolor=black, linewidth=0.04](1.14,-1.1913147)(0.74,-1.1913147)
\psline[linecolor=black, linewidth=0.04](1.14,-1.3913147)(0.74,-1.3913147)
\psline[linecolor=black, linewidth=0.04](1.14,-1.5913147)(0.74,-1.5913147)
\psline[linecolor=black, linewidth=0.04](1.14,-2.3913147)(0.74,-2.3913147)
\psline[linecolor=black, linewidth=0.04](9.54,2.6086853)(9.54,-2.5913148)(10.74,-2.5913148)(10.74,2.6086853)(9.54,2.6086853)
\psline[linecolor=black, linewidth=0.04](10.74,2.4086852)(11.14,2.4086852)
\psline[linecolor=black, linewidth=0.04](10.74,2.2086854)(11.14,2.2086854)
\psline[linecolor=black, linewidth=0.04](10.74,2.0086854)(11.14,2.0086854)
\psline[linecolor=black, linewidth=0.04](10.74,1.8086853)(11.14,1.8086853)
\psline[linecolor=black, linewidth=0.04](10.74,1.6086853)(11.14,1.6086853)
\psline[linecolor=black, linewidth=0.04](10.74,1.4086853)(11.14,1.4086853)
\psline[linecolor=black, linewidth=0.04](10.74,-1.1913147)(11.14,-1.1913147)
\psline[linecolor=black, linewidth=0.04](10.74,-1.3913147)(11.14,-1.3913147)
\psline[linecolor=black, linewidth=0.04](10.74,-1.5913147)(11.14,-1.5913147)
\psline[linecolor=black, linewidth=0.04](10.74,-1.7913147)(11.14,-1.7913147)
\psline[linecolor=black, linewidth=0.04](10.74,-1.9913146)(11.14,-1.9913146)
\psline[linecolor=black, linewidth=0.04](10.74,-2.1913147)(11.14,-2.1913147)
\psline[linecolor=black, linewidth=0.04](10.74,-2.3913147)(11.14,-2.3913147)
\psline[linecolor=black, linewidth=0.04](10.74,1.2086853)(11.14,1.2086853)
\rput(1.54,1.8086853){$D_1$}
\rput(1.54,-1.7913147){$D_m$}
\rput(10.14,0.008685303){$S$}
\rput[r](0.34,1.8086853){$b_1$}
\rput(1.14,1.6086853){$\cdot$}
\rput(1.14,1.4086853){$\cdot$}
\rput[l](11.74,0.008685303){$a$}
\rput(10.94,0.8086853){$\cdot$}
\rput(10.94,0.6086853){$\cdot$}
\rput(10.94,0.4086853){$\cdot$}
\rput(0.94,1.8086853){$\cdot$}
\rput(0.94,1.6086853){$\cdot$}
\rput(0.94,1.4086853){$\cdot$}
\rput(10.94,0.20868531){$\cdot$}
\rput(10.94,0.008685303){$\cdot$}
\rput(10.94,-0.1913147){$\cdot$}
\rput(0.94,-1.7913147){$\cdot$}
\rput(0.94,-1.9913146){$\cdot$}
\rput(0.94,-2.1913147){$\cdot$}
\rput(2.14,0.6086853){$\cdot$}
\rput(2.14,0.4086853){$\cdot$}
\rput(2.14,0.20868531){$\cdot$}
\rput(2.14,0.008685303){$\cdot$}
\rput(2.14,-0.1913147){$\cdot$}
\rput(2.14,-0.3913147){$\cdot$}
\rput(2.14,-0.5913147){$\cdot$}
\rput(10.94,-0.3913147){$\cdot$}
\rput(0.54,-1.7913147){$\left\{\rule{0em}{2em}\right.$}
\rput[r](0.34,-1.7913147){$a_n$}
\rput(0.54,1.8086853){$\left\{\rule{0em}{2em}\right.$}
\rput(11.54,0.008685303){$\left.\rule{0em}{7.2em}\right\}$}
\rput(10.94,-0.5913147){$\cdot$}
\rput(10.94,-0.7913147){$\cdot$}
\psline[linecolor=black, linewidth=0.04](5.94,2.4086852)(6.34,2.4086852)
\psline[linecolor=black, linewidth=0.04](5.94,2.2086854)(6.34,2.2086854)
\psline[linecolor=black, linewidth=0.04](5.94,2.0086854)(6.34,2.0086854)
\psline[linecolor=black, linewidth=0.04](5.94,1.2086853)(6.34,1.2086853)
\psline[linecolor=black, linewidth=0.04](5.94,-1.1913147)(6.34,-1.1913147)
\psline[linecolor=black, linewidth=0.04](5.94,-1.3913147)(6.34,-1.3913147)
\psline[linecolor=black, linewidth=0.04](5.94,-1.5913147)(6.34,-1.5913147)
\psline[linecolor=black, linewidth=0.04](5.94,-2.3913147)(6.34,-2.3913147)
\psline[linecolor=black, linewidth=0.04](4.14,2.6086853)(4.14,-2.5913148)(2.94,-2.5913148)(2.94,2.6086853)(4.14,2.6086853)
\psline[linecolor=black, linewidth=0.04](9.54,2.4086852)(9.14,2.4086852)
\psline[linecolor=black, linewidth=0.04](9.54,2.2086854)(9.14,2.2086854)
\psline[linecolor=black, linewidth=0.04](9.54,2.0086854)(9.14,2.0086854)
\psline[linecolor=black, linewidth=0.04](9.54,1.8086853)(9.14,1.8086853)
\psline[linecolor=black, linewidth=0.04](9.54,1.6086853)(9.14,1.6086853)
\psline[linecolor=black, linewidth=0.04](9.54,1.4086853)(9.14,1.4086853)
\psline[linecolor=black, linewidth=0.04](9.54,-1.1913147)(9.14,-1.1913147)
\psline[linecolor=black, linewidth=0.04](9.54,-1.3913147)(9.14,-1.3913147)
\psline[linecolor=black, linewidth=0.04](9.54,-1.5913147)(9.14,-1.5913147)
\psline[linecolor=black, linewidth=0.04](9.54,1.2086853)(9.14,1.2086853)
\rput(5.54,1.8086853){$F_1$}
\rput(5.54,-1.7913147){$F_n$}
\rput(3.54,0.008685303){$T$}
\rput[l](6.74,1.8086853){$a_1$}
\rput(5.94,1.6086853){$\cdot$}
\rput(5.94,1.4086853){$\cdot$}
\rput[r](8.54,0.008685303){$m$}
\rput(9.34,0.8086853){$\cdot$}
\rput(9.34,0.6086853){$\cdot$}
\rput(9.34,0.4086853){$\cdot$}
\rput(6.14,1.8086853){$\cdot$}
\rput(6.14,1.6086853){$\cdot$}
\rput(6.14,1.4086853){$\cdot$}
\rput(9.34,0.20868531){$\cdot$}
\rput(9.34,0.008685303){$\cdot$}
\rput(9.34,-0.1913147){$\cdot$}
\rput(6.14,-1.7913147){$\cdot$}
\rput(6.14,-1.9913146){$\cdot$}
\rput(6.14,-2.1913147){$\cdot$}
\rput(5.14,0.6086853){$\cdot$}
\rput(5.14,0.4086853){$\cdot$}
\rput(5.14,0.20868531){$\cdot$}
\rput(5.14,0.008685303){$\cdot$}
\rput(5.14,-0.1913147){$\cdot$}
\rput(5.14,-0.3913147){$\cdot$}
\rput(5.14,-0.5913147){$\cdot$}
\rput(9.34,-0.3913147){$\cdot$}
\rput[l](6.74,-1.7913147){$b_m$}
\rput{-178.4686}(13.125537,-3.4072094){\rput(6.54,-1.7913147){$\left\{\rule{0em}{2em}\right.$}}
\rput{-178.4686}(13.029327,3.7915046){\rput(6.54,1.8086853){$\left\{\rule{0em}{2em}\right.$}}
\rput(8.74,0.008685303){$\left\{\rule{0em}{7.2em}\right.$}
\rput(9.34,-0.5913147){$\cdot$}
\rput(9.34,-0.7913147){$\cdot$}
\rput[l](7,0.1){\Large$\stackrel \varpi\longrightarrow$}
\psline[linecolor=black, linewidth=0.04](1.14,2.6086853)(1.14,1.0086854)(2.34,1.8086853)(1.14,2.6086853)
\psline[linecolor=black, linewidth=0.04](1.14,-0.9913147)(1.14,-2.5913148)(2.34,-1.7913147)(1.14,-0.9913147)
\psline[linecolor=black, linewidth=0.04](2.34,1.8086853)(2.94,1.8086853)
\psline[linecolor=black, linewidth=0.04](2.34,-1.7913147)(2.94,-1.7913147)
\psline[linecolor=black, linewidth=0.04](5.94,2.6086853)(5.94,1.0086854)(4.74,1.8086853)(5.94,2.6086853)(5.94,2.6086853)
\psline[linecolor=black, linewidth=0.04](5.94,-0.9913147)(5.94,-2.5913148)(4.74,-1.7913147)(5.94,-0.9913147)(5.94,-0.9913147)
\psline[linecolor=black, linewidth=0.04](4.74,1.8086853)(4.14,1.8086853)
\psline[linecolor=black, linewidth=0.04](4.74,-1.7913147)(4.14,-1.7913147)(4.14,-1.7913147)
\psline[linecolor=black, linewidth=0.04](9.54,-1.7913147)(9.14,-1.7913147)
\psline[linecolor=black, linewidth=0.04](9.54,-1.9913146)(9.14,-1.9913146)
\psline[linecolor=black, linewidth=0.04](9.54,-2.1913147)(9.14,-2.1913147)
\psline[linecolor=black, linewidth=0.04](9.54,-2.3913147)(9.14,-2.3913147)
\end{pspicture}
}
\caption{An attempt to rewrite bimodule action~\eqref{Byly Dusicky.}.}
\label{Za chvili pojedu za Jaruskou.}
\end{figure}
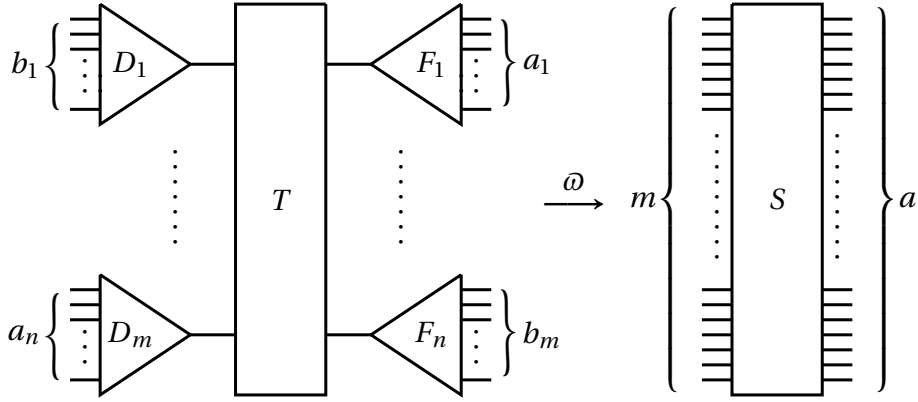  
The bicardinality of the the left figure is $(a_1+
\cdots +a_n,b_1+
\cdots +b_m)$, while the bicardinality of $S$ is $(a,m)$. The
map $\varpi$ may exist only if the respective cardinalities agree, i.e.\ if
\begin{equation}
\label{Neco na mne lezlo ale uz snad odlezlo.}
a_1+ \cdots + a_n = a \ \hbox { and } \
b_1+ \cdots + b_m = m.
\end{equation}
The first equality follows from the axioms of the operadic
category $\ttQ_L$. 
But there is no reason why the second equation should hold, unless we
assume  that $b_j = 1$ for all $j$,  which brings us back to the
unary case. Furthermore, equations~(\ref{Neco na mne lezlo ale uz snad
  odlezlo.}) are not self dual. 
However, the structures like the one in Figure~\ref{Za chvili pojedu za
  Jaruskou.} make sense in the context of the `contramodules' discussed in
the \hbox{Epilogue.} 
\end{remark}

\begin{example}
\label{Zitra jedu pro vetvicky.}
The category $\d$
is the terminal dioperadic category.
Since the fiber subcategory of $\d$ is $\sFSet$, cf.~(\ref{Zitra
  pojedu na Technion.}), 
$\d$-operads are the same as $\Fin$-operads, which are, 
by~\cite[Example~1.15]{duodel}, the most classical $\Sigma$-
(aka symmetric) operads of V.A.~Artamonov~\cite{Artamonov} in
1969, i.e.\ collections $\oP = \Coll \oP n$ of $\Sigma$-modules in~$\ttV$
with structure operations
\[
\oP(n) \ot \oP(k_1) \ot \cdots \ot \oP(k_n) \longrightarrow 
\oP(k_1 + \cdots + k_n),
\]
that are given for any $n, \Rada k1n \in \bbN$ and that satisfy the
standard associativity and equivariance 
axioms~\cite[Definition~1]{markl:handbook}.
Dually, $\d$-cooperads are $\Sigma$-cooperads with structure
operations
\[
\cooP(l_1 + \cdots + l_m) \longrightarrow  
\cooP(l_1) \ot \cdots \ot \cooP(l_m) \ot      \cooP(m),
\]
given for any $m, \Rada l1m \in \bbN$ and satisfying the formal duals
of the operad axioms.

Finally, \CP-bimodules are bicollections $\oM = \{\oM(n,m)\}_{n,m \in \bbN}$
such that each $\oM(n,m)$ is a $\Sigma_m$-$\Sigma_n$-bimodule in $\ttV$,
with structure operations 
\[
\oM(n,l_1 + \cdots + l_m) \ot  \oP(k_1) \ot \cdots \ot \oP(k_n)
\longrightarrow
\cooP(l_1) \ot \cdots \ot \cooP(l_m) \ot \oM(k_1 + \cdots + k_n,m)
\]
defined for any  $n,m, \Rada k1n, \Rada l1m \in \bbN$. We are however
not going to write the bimodule axioms explicitly here since they can be
obtained easily by expanding the definition. Operads, cooperads and
bimodules over the dioperadic category  $\dBq$ of $\frakC$-bibouquets
introduced in
Example~\ref{Zalije mi Jarka jeste jednou kyticky?} are the
$\frakC$-colored extensions of the above objects.

There are important 
particular cases of \CP-bimodules with $\oM(n,m) = 0$ whenever $m\not= 0$.
With $\oM(n) := \oM(n,0)$ for $n \in \bbN$, the structure operations
\[
\oM(n) \ot \oP(k_1) \ot \cdots \ot \oP(k_n) \longrightarrow  
\oM(k_1 + \cdots + k_n)
\]
are that of a {\em right $\oP$-module\/} in the sense
of~\cite[page~1476]{zebrulka},
cf.~also~\cite[Definition~2.2]{cyclo}. In this particular case the
cooperad $\cooP$ does not enter the picture at all, since there are no cofibers. 
\hbox{\CP-bimodules} thus generalize the `classical' right operadic modules
which capture  e.g.\ the structure of the compactification of the moduli space of
points in a manifold~\cite[Proposition~6.4]{markl:cf}.
\end{example}

\begin{example}
The dioperadic category $\d$ has a dioperadic subcategory $\dd:=
\Delta_{\rm alg} \times \Delta_{\rm alg}^\dagger$, where   $\Delta_{\rm alg}$
is the category of finite ordinals, including the empty one. The
related objects, i.e.~operads, cooperads and bimodules, are 
non-$\Sigma$ versions of the corresponding objects in 
Example~\ref{Zitra jedu pro vetvicky.}.
\end{example}

\section{General bioperadic categories}
\label{Jaroslav Vodrazka nahral ty fantasticke veci.}

\lettrine{\color{red} T} {he} 
aim of this section is to generalize the material of
Section~\ref{Posloucham ricercar.} to categories with
objects of arbitrary bicardinality. 
The analysis of the morphism~(\ref{Dnes bude cely den
  prset.}) now consists of the schemes
\begin{equation}
\label{Dnes pojedeme na obed.}
\xymatrix@C=-.4em@R=-.2em{
&&F_i  \ar[rrrrrrrrrrrr]^{\id_{F_i}} &&&&&&&&&&&& F_i  &\cof& \
\rada{C^1_{F_i}}{C^{|F_i|_R}_{F_i}}
\\
&&\raisebox{.7em}{{\rotatebox{270}{$\fib$}}}&&&&&&&&&&&&
\raisebox{.7em}{{\rotatebox{270}{$\fib$}}}
\\
\rada{U^1_S}{U^a_S}\hskip -1em &\fib & \ar[dddddd]_h  S \ar[rrrrrrrrrrrr]^{\id_S}  
&&&&&&&&&&&& S \ar[dddddd]^h     &\cof&  \hskip -1em \rada{C^1_S}{C^m_S}
\\
\\
\\
 &&&&&&\hbox{\hskip 1em \boxed{\tt analysis\rule{0em}{.85em}}}&&&&&&
\\
\\
\\
\rada{U^1_T}{U^n_T}\hskip -1em &\fib &   T \ar[rrrrrrrrrrrr]^{\id_T}
&&&&&&&&&&&& T
&\cof& \hskip -1em  \rada{C^1_T}{C^b_T}
\\
&&\raisebox{.3em}{{\rotatebox{90}{\hskip -.2em$\fib$}}}&&&&&&&&&&&&
\raisebox{.3em}{{\rotatebox{90}{\hskip -.2em$\fib$}}}
\\
\rada{U^1_{D_j}}{U^{|D_j|_L}_{D_j}}     &\fib&D_j \ar[rrrrrrrrrrrr]^{\id_{D_j}}  &&&&&&&&&&&& D_j  &&
}
\end{equation}
given for each $i \in [n]$, $j \in [m]$.

\subsection{Operad algebras and coalgebras revisited}
We follow the
principle formulated after Definition~\ref{Podari se mi ten clanek vubec
  publikovat?} 
and try to define $\oP$-algebras, $\cooP$-coalgebras and
$\oM$-traces for general dioperadic categories. 
Guided by our philosophy, the {\em source\/} $\sou
X$ of an object
$X \in \ttQ$  must be the list
of connected components of the fibers of \hbox{$\id_X : X \to X$} and,
similarly, the {\/\em target \/}  $\tar X$  the list
of connected components of the cofibers. That is,
in the situation
\[
\rada{U_X^1}{U_X^n}  \ \fib X \stackrel {\id_X}\longrightarrow X
\cof \  \rada{C_X^1}{C_X^m}, \ \
\bic X = (n,m)
\]
we shall define
\begin{equation}
\label{Budu muset jet na chalupu.}
\sou X := \big\{\ \pi_0(U^i_X) \ | \ 1 \leq i \leq n\ \big\}
\ \hbox { and } \
\tar X := \big\{\ \pi_0(C^j_X) \ | \ 1 \leq j \leq m\ \big\}.
\end{equation}

For a collection $C = \{C_c\}_{c \in \pi_0(\ttQ)}$ of objects 
of the base monoidal category 
$\ttV$ parametrized by the connected components of
$\ttQ$ we will abbreviate
\begin{equation}
\label{Vubec se mi tam nechce.}
C_{\sou X} := \bigotimes_{c \in\, \sou X} C_c 
\ \hbox { and } \
C_{\tar X} := \bigotimes_{c \in\, \tar X} C_c .
\end{equation}
The non-unary version of the associativity~(\ref{Dnes predsedam Rade}) 
of a $\oP$-operad action
would require, for each
$\Rada F1n \ \fib   S \xrightarrow h T$ in the fiber subcategory
$\ttQ_F \subset \ttQ_L$, isomorphisms
\begin{subequations}
\begin{align}
\label{Pujdu dnes}
A_{\tar T} &\cong A_{\tar S}, \hbox { and}
\\
\label{s Jarkou}
A_{\sou T}  &\cong  A_{\tar {F_1}} \ot \cdots \ot A_{\tar{F_n}}.
\end{align}
\end{subequations}
Likewise, the non-unary version~(\ref{s2}) 
of the associativity for a $\cooP$-cooperad action needs
\begin{subequations}
\begin{align}
\label{k}
B_{\sou S} &\cong B_{\sou T}, \ \hbox { and }
\\
\label{Pakousum?}
B_{\tar S} &\cong   B_{\sou {D_1}} \ot \cdots \ot B_{\sou{D_m}}  
\end{align}
\end{subequations}
for arbitrary $S \xrightarrow h T \cof \ \Rada D1m$ in the cofiber
subcategory 
$\ttQ_C \subset \ttQ_R$.
Since~\eqref{Jarce hrabe z chalupy.} for the bimodule action
involves both  $\oP$-algebras and $\cooP$-coalgebras, it
requires all the isomorphisms above. The isomorphisms~(\ref{s Jarkou})
and~(\ref{Pakousum?}) must moreover hold for arbitrary $h:S \to T$ as
in~\eqref{Dnes bude cely den prset.}, not only for morphisms in
$\ttQ_F$ resp.~$\ttQ_C$. Notice that the isomorphisms
\[
A_{\sou {F_1}} \ot \cdots \ot A_{\sou {F_n}} \cong A_{\sou S}
\ \hbox { and } \
B_{\tar {D_1}} \ot \cdots \ot B_{\tar {D_m}} \cong B_{\tar T}
\]
follow, as in the unary case, from~\cite[Axiom~(iv),
page~1634]{duodel}  which guarantees the equalities of disjoint unions
\[
\sou S = \sou {F_1} \sqcup \cdots \sqcup \sou {F_n} 
\ \hbox { and } \
\tar T = \tar {D_1}  \sqcup \cdots \sqcup  \tar {D_m}.\
\]

\subsection{General bioperadic categories, (co)algebras and traces}
The central definition of this section, formulated below, imposes
on dioperadic categories additional 
conditions  that imply isomorphisms~(\ref{Pujdu dnes})--(\ref{Pakousum?}).
Furthermore, every object of $\ttQ_F$ must
have exactly one `output,' since algebras for  $\ttQ_F$-operads 
must be structures with
many-to-one operations. For the same reasons, 
every object of  $\ttQ_C$ must have
exactly one `input.' This gives rise to an extra
numerological  condition, which is automatically satisfied 
in the unary case.

\begin{definition}
\label{az do dnu antiky}
A {\/\em left bioperadic\/} category is a left dioperadic category $\ttQ$
in Definition~\ref{Musim se objednat na endokrinologii.} such that
all objects of the fiber subcategory $\ttQ_F$
are of  bicardinality  $(a,1)$ with some $a \in
\bbN$ and, for any morphism  $\Rada F1n \ \fib S \stackrel
h\longrightarrow T$ in $\ttQ_F$,
\begin{subequations}
\begin{align}
\label{2a}
(F_i \redukce{$\stackrel {\id_{F_i}}\longrightarrow F_i$}  \cof \ C_{F_i} \ \& \
\rada{U^1_{T}}U^n_{T} \fib T \stackrel {\id_T}\longrightarrow  T) &\Longrightarrow 
(C_{F_i} = U^i_T), \ 1 \leq i \leq n,\ \hbox { and}
\\
  \label{2abis}
(S \Xarrow {\id_S} S\ \cof \ C_S  
\ \& \ T \Xarrow {\id_T} T \cof \ C_T) &\Longrightarrow C_S = C_T.
\end{align}
\end{subequations}
Dually, a right dioperadic category $\ttQ$ is {\/\em right
  bioperadic\/} if
and all objects of the cofiber subcategory $\ttQ_C$ are of bicardinality  
$(1,b)$ with some $b \in
\bbN$ and, 
for every $S\redukce{$\stackrel h\longrightarrow$} T \cof \Rada D1m$ in $\ttQ_C$,
\begin{subequations}
\begin{align}
\label{2b}
(U_{D_j} \fib D_j \stackrel {\id_{D_j}}\longrightarrow  D_j
 \ \& \
S \stackrel {\id_S}\longrightarrow S \cof \ \rada {C^1_S}{C^m_S})
& \Longrightarrow 
(U_{D_j} = C^j_S), \ 1 \leq j \leq m,\ \hbox { and}
\\
\label{2bbis}
(U_T \ \fib T \Xarrow {\id_T} T   
\ \& \ U_S \ \fib S \Xarrow {\id_S} S) &\Longrightarrow U_T = U_S.
\end{align}
Finally, a dioperadic category is {\/\em bioperadic\/} 
if it is both left and right bioperadic,  
and if ~(\ref{2a}) and~(\ref{2b}) are fulfilled for any morphism $h:
S \to T$ of $\ttQ$.
\end{subequations}
\end{definition}

Notice that $m=b=1$ in the analysis~\eqref{Dnes pojedeme na obed.} of a
morphism $h : S\to T$ of the fiber subcategory, so the identity
in~\eqref{2abis} indeed involves only one cofiber. Likewise, the identity
in~\eqref{2bbis} involves only one fiber.
The equalities in~\eqref{2a} imply
$\tar {F_1} \sqcup \cdots \sqcup \tar {F_n} = \sou T$
and the equalities in~\eqref{2b} imply that
$\sou {D_1} \sqcup \cdots \sqcup \sou {D_m} = \tar S$.
The objects in the intersection $\ttQ_F \cap \ttQ_C$ have the
bicardinality $(1,1)$, so the morphisms in $\ttQ_F \cap \ttQ_C$ 
are of the form $F \ \fib  S
\xrightarrow h T  \cof\ D$ and they satisfy 
equalities~(\ref{Posloucham Ceske a morevske barokni varhany II.}) 
in Remark~\ref{Hraji si se cteckou.}.

\begin{proposition}
\label{zasute refremy kterym uz chybeji slova}
The inclusion of the sets
\[
\big\{ U^i_T \ | \ \rada {U^1_T}{U^n_T} \ \fib T \stackrel
  {\id_T}\longrightarrow T,\ T \in \ttQ_F, \ i\in [n]\big\}
\subseteq
\big\{ C^j_T \ | \ T \stackrel
  {\id_T}\longrightarrow T \cof \  \rada {C^1_T}{C^m_T}, \ T \in
  \ttQ_F,\ j \in  [m] \big\}
\]
holds in an arbitrary left bioperadic category $\ttQ$. In a right
bioperadic category,
\[
\big\{ U^i_T \ | \ \rada {U^1_T}{U^n_T} \ \fib T \stackrel
  {\id_T}\longrightarrow T,\ T \in \ttQ_C, \ i\in [n]\big\}
\supseteq
\big\{ C^j_T \ | \ T \stackrel
  {\id_T}\longrightarrow T \cof \  \rada {C^1_T}{C^m_T}, \ T \in
  \ttQ_C,\ j \in  [m] \big\}.
\]
If\/ $\ttQ$ is bioperadic, then
\[
\big\{ U^i_T \ | \ \rada {U^1_T}{U^n_T} \ \fib T \stackrel
  {\id_T}\longrightarrow T,\ T \in \ttQ, \ i\in [n]\big\}
=
\big\{ C^j_T \ | \ T \stackrel
  {\id_T}\longrightarrow T \cof \  \rada {C^1_T}{C^m_T}, \ T \in
  \ttQ,\ j \in  [m] \big\},
\]
so  the sets of elements of the sources and the sets of elements
of the targets are the same. 
\end{proposition}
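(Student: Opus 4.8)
The plan is to mimic the proof of the unary Proposition~\ref{Najdu odvahu se dnes v te strasne zime projet na kole?}, feeding the identity morphisms into conditions~\eqref{2a} and~\eqref{2b}. First I would record a preliminary remark that makes the single-object symbols in~\eqref{2a}--\eqref{2b} legitimate in the presence of several (co)fibers: by the very definition of the fiber subcategory, every fiber of a morphism of $\ttQ$ is an object of $\ttQ_F$, and since $\ttQ$ is left bioperadic each object of $\ttQ_F$ has bicardinality $(a,1)$ for some $a\in\bbN$; in particular its identity has exactly one cofiber, so the symbol $C_{F_i}$ occurring in~\eqref{2a} denotes a well-defined object. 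Dually, every cofiber of a morphism of $\ttQ$ lies in $\ttQ_C$ and, for right bioperadic $\ttQ$, has bicardinality $(1,b)$, so the symbol $U_{D_j}$ in~\eqref{2b} is well-defined.

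For the first inclusion I would fix $X \in \ttQ_F$ with identity fibers $\rada{U^1_X}{U^n_X} \fib X \stackrel{\id_X}{\longrightarrow} X$ and show that each $U^i_X$ arises as the cofiber of the identity of an object of $\ttQ_F$, which is exactly what membership in the right-hand set means. The key move is to apply~\eqref{2a} to the morphism $h := \id_X$, which belongs to $\ttQ_F$ since $X$ does and $\ttQ_F$ is a subcategory: its $i$th fiber is $F_i = U^i_X$, and the hypothesis of~\eqref{2a} for this $h$ is precisely $U^i_X \stackrel{\id_{U^i_X}}{\longrightarrow} U^i_X \cof C_{U^i_X}$ together with $\rada{U^1_X}{U^n_X} \fib X \stackrel{\id_X}{\longrightarrow} X$, so~\eqref{2a} gives $C_{U^i_X} = U^i_X$. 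As $U^i_X \in \ttQ_F$ has a single cofiber, this exhibits $U^i_X$ as the cofiber of $\id_{U^i_X}$, hence as a member of the right-hand set; this yields~$\subseteq$.

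The second inclusion I would get by the formally dual computation, with~\eqref{2b}, $\ttQ_C$ and cofibers of identities in place of~\eqref{2a}, $\ttQ_F$ and fibers of identities: for $Y \in \ttQ_C$ with $Y \stackrel{\id_Y}{\longrightarrow} Y \cof \rada{C^1_Y}{C^m_Y}$, applying~\eqref{2b} to $h := \id_Y$ gives $U_{C^j_Y} = C^j_Y$, so each $C^j_Y$ is the fiber of the identity of the object $C^j_Y \in \ttQ_C$ and lies in the left-hand set. When $\ttQ$ is bioperadic, \eqref{2a} and~\eqref{2b} hold for every morphism of $\ttQ$, so both computations apply verbatim with $h := \id_T$ for an arbitrary $T \in \ttQ$, giving $C_{U^i_T} = U^i_T$ and $U_{C^j_T} = C^j_T$ simultaneously, whence the two sets coincide. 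Finally I would apply $\pi_0$ to this equality and invoke~\eqref{Budu muset jet na chalupu.}, by which the elements of $\sou T$ are the $\pi_0(U^i_T)$ and those of $\tar T$ the $\pi_0(C^j_T)$, to conclude that the set of all elements of sources equals the set of all elements of targets. I do not expect a genuine obstacle here; the only delicate point is the bicardinality bookkeeping of the first paragraph, needed both so that $C_{F_i}$ and $U_{D_j}$ in~\eqref{2a}--\eqref{2b} make sense and so that the objects $U^i_X$, $C^j_Y$ produced really sit in the index sets $\ttQ_F$, $\ttQ_C$ of the respective sides.
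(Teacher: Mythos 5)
Your proposal is correct and follows essentially the same route as the paper, which simply declares the general case ``a straightforward modification'' of the unary Proposition~\ref{Najdu odvahu se dnes v te strasne zime projet na kole?}: apply~\eqref{2a} to $h=\id_X$ to get $C_{U^i_X}=U^i_X$, apply~\eqref{2b} dually, and combine both in the bioperadic case. Your extra remark on the bicardinality bookkeeping, ensuring $C_{F_i}$ and $U_{D_j}$ are single well-defined objects, is exactly the detail the paper leaves implicit.
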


\begin{proof}
A straightforward modification of the proof of Proposition~\ref{Najdu
  odvahu se dnes v te strasne zime projet na kole?}.   
\end{proof}

\begin{proposition}
\label{Pojedeme do Mercina.}
Let $\ttQ = (\ttQ_L,\ttQ_R)$ be left dioperadic.
Then
\begin{itemize}
\item[(i)]
the subcategory $\ttQ_F \subset \ttQ_L$ is an operadic subcategory,
\item [(ii)]
the fibers of all maps in $\ttQ$ have 
bicardinalities of the form $(a,1)$, with some $a \in
\bbN$,
\item[(iii)]
for each morphism $h : S \to T$ as in~\eqref{Dnes bude cely den
  prset.} and $i \in [n]$, $\bic {\inv {\redukce{$\vec h$}}( i)} = 
\inv{\redukce{$\vec{\bic h}$}}(i)$.
\end{itemize}
Dually, if \/ $\ttQ = (\ttQ_L,\ttQ_R)$ is right dioperadic, then
\begin{itemize}
\item[(\ii)]
the subcategory $\ttQ_C \subset \ttQ_R$ is an
operadic subcategory,
\item [(\iii)]
the cofibers of all maps in $\ttQ$ have  
bicardinalities of the form $(1,b)$, with some $b \in
\bbN$,
\item[(\iiii)]
for each morphism $h : S \to T$ as in~\eqref{Dnes bude cely den
  prset.} and $j \in [m]$, $\bic {\inv {\redukce{$\cev  h$}}( j)} = 
\inv{\redukce{$\cev{\bic h}$}}(j)$.
\end{itemize}
\end{proposition}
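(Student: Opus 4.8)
The plan is to prove items (i)--(iii); the dual assertions (\ii), (\iii), (\iiii) then follow by passing to $\ttQ^\dagger$ with the roles of $\ttQ_L$ and $\ttQ_R$ exchanged, so I will not spell them out. For (i), left dioperadicity of $\ttQ$ is, by Definition~\ref{Musim se objednat na endokrinologii.}, nothing but the perfectness of $\ttQ_L$, and by Definition~\ref{Boli mne prave rameno.} this already says that $\ttQ_F$ is a subcategory of $\ttQ_L$; what remains is to see that $\ttQ_F$ is stable under the operadic data of $\ttQ_L$. An object of $\ttQ_F$ is a fiber $\inv{\vec h}(i)$ of some morphism $h$, and a morphism of $\ttQ_F$ is an induced map $h_i$ of a composable pair $S\stackrel h\longrightarrow T\stackrel g\longrightarrow R$ as in Definition~\ref{Boli mne prave rameno.}. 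By Axiom~(iv) of an operadic category (\cite[page~1634]{duodel}), every fiber of such an $h_i$ is again one of the fibers of $h$, hence an object of $\ttQ_F$, while the perfectness of $\ttQ_L$ is precisely the closure of the class of induced maps under composition. Since $|{-}|_L$ is defined on all of $\ttQ_L$, this exhibits $\ttQ_F\subseteq\ttQ_L$ as an operadic subcategory --- the assertion already recorded in the paragraph following Definition~\ref{Boli mne prave rameno.}.

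For (iii) I would use that the cardinality functor $|{-}|_L$ of $\ttQ_L$ is operadic and hence commutes with the formation of fibers: for $h$ as in~\eqref{Dnes bude cely den prset.} the first component of $\bic{\inv{\vec h}(i)}=\big(|\inv{\vec h}(i)|_L,\,|\inv{\vec h}(i)|_R\big)$ equals $\inv{|h|_L}(i)\in\sFSet$, which by the description of the fibers of $\d$ in Example~\ref{Grete prokluzuje spojka.} is precisely the first component of the $\d$-fiber $\inv{\vec{\bic h}}(i)=\big(\inv{|h|_L}(i),1\big)$. Thus (iii) reduces to the single equality $|\inv{\vec h}(i)|_R=[1]$, which is also the whole content of (ii). To establish this for $F:=\inv{\vec h}(i)$ the plan is to bring in the analysis~\eqref{Dnes pojedeme na obed.} of $h$, inside which the identity $\id_F$ --- which lies in $\ttQ_F$ by (i), being the induced map of the composable pair $S\stackrel{\id_S}\longrightarrow S\stackrel h\longrightarrow T$ --- is itself analysed, producing the cofibers $C^1_F,\dots,C^{|F|_R}_F$ together with the induced maps relating them to the cofibers of $\id_S$ and of $h$; applying Axiom~(iv) on the $\ttQ_R$-side to those induced maps should then force $F$ to carry a single cofiber, whence (ii) and (iii).

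I expect the only genuinely non-formal step to be this last one: checking that a fiber of a morphism of $\ttQ_L$ has exactly one $\ttQ_R$-cofiber. Item (i) and the reduction of (iii) to (ii) are routine bookkeeping with the operadic axioms and with Example~\ref{Grete prokluzuje spojka.}; the hard part is to extract, from the way the analysis diagrams of $h$ and of $\id_F$ interlock --- that is, from the compatibility between the fiber structure of $\ttQ_L$ and the cofiber structure of $\ttQ_R$ --- that $|F|_R=[1]$, and it is there that the operadic axioms (in particular Axiom~(iv)) must be used in earnest rather than merely quoted.
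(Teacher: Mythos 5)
Your handling of item (i) and your reduction of (iii) to the single equality $\big|\inv{\vec h}(i)\big|_R=[1]$, i.e.\ to item (ii), agree with what the paper does: the paper also dismisses (i) as obvious and derives (iii) from (ii) via the cardinality identity~\eqref{Za tyden jedu do Berlina.} and the description of the fibers of $\d$ in Example~\ref{Grete prokluzuje spojka.}. The problem lies entirely in your plan for (ii).

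You propose to extract $|F|_R=[1]$ for a fiber $F$ from the way the analysis diagrams of $h$ and of $\id_F$ interlock, using Axiom~(iv) on the $\ttQ_R$-side. No such derivation can exist, because item (ii) is false under the hypothesis of left dioperadicity alone: Example~\ref{Super tautological} produces, from any perfect operadic category $\ttO$, a dioperadic category $\cO=(\cO_L,\cO_R)$ with $|T|_R=[0]$ for \emph{every} object $T$, so every fiber there has bicardinality $(a,0)$. The requirement that all objects of $\ttQ_F$ have bicardinality $(a,1)$ is exactly the extra ``numerological'' condition imposed by hand in Definition~\ref{az do dnu antiky}, and the paper stresses that it does not follow from the remaining axioms outside the unary case. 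Accordingly, the paper's own proof of (ii) reads ``Item (ii) follows from the left bioperadicity and the fact that the fibers of all maps belong to $\ttQ_F$'' --- the hypothesis actually used is left \emph{bi}operadicity, under which (ii) is immediate: every fiber is an object of $\ttQ_F$, and Definition~\ref{az do dnu antiky} forces such objects to have bicardinality $(a,1)$. So the stated hypothesis must be read as (or corrected to) ``left bioperadic''; once that is granted, the ``hard step'' you isolated evaporates, and the rest of your argument, including the dualization, goes through.
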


In item~(iii),  $\bic {\inv {\redukce{$\vec h$}}( i)}$ is the biarity
of the $i$th fiber of $h$ and $\inv{\redukce{$\vec{\bic h}$}}(i)$ is
the $i$th fiber of the induced map $\bic h : \bic S \to \bic T$.
Recall from~\cite[page~1634]{duodel} that 
the cardinality functor in any operadic category satisfies
\begin{equation}
\label{Za tyden jedu do Berlina.}
|f^{-1}(i)| = |f|^{-1}(i), \ \hbox { for } 
\ f: S \longrightarrow T, \ i \in |T|.  
\end{equation}
The equalities in items~(iii) and~(\rmiiii) are dioperadic versions
of~(\ref{Za tyden jedu do Berlina.}). 

\begin{proof}[Proof of Proposition~\ref{Pojedeme do Mercina.}]
Item~(i) is obvious. Item (ii) follows from the left bioperadicity
and the fact that the fibers of all maps belong to $\ttQ_F$. 
Let us analyze~(iii). We have, by definitions
\begin{eqnarray*}
\bic{
\inv{\redukce{$\vec h$}}(i)
}   = 
\big(\big| 
\inv{\redukce{$\vec h$}}(i)
\big|_L, \,
\big|
\inv{\redukce{$\vec h$}}(i)\big|_R\big) \ \hbox { and\ }\
\inv{\redukce{$\vec{\bic{h}}$}}(i) =
\big(\vec{|h|}^{-1}(i),[1]\big),
\end{eqnarray*}
while $\big|\inv{\redukce{$\vec h$}}(i)\big|_L
= \vec{|h|}^{-1}(i)$  
by property of the cardinality 
functor recalled in~\eqref{Za tyden jedu do Berlina.}.
Thus the equality in item~(iii) is equivalent to 
$\big|\vec h^{-1}(i)\big|_R = [1]$,
meaning that the $i$th fiber of $h$ has the biarity $(a,1)$ with 
some $a \in \bbN$.
This however follows from~(ii).
The proof of the second part of the proposition is similar.
\end{proof}

\begin{definition}
\label{Dnes dam kolecko do opravy.}
Let $\ttQ = (\ttQ_L,\ttQ_R)$ be a left bioperadic category and $\oL$
a $\ttQ_F$-operad. A {\em $\oL$-algebra} is a collection  $A = \{A_c\}_{c
  \in \pi_0(\ttQ_F)}$ together with structure operations
\begin{subequations}
\[
a_F : \oL(F) \ot A_{\sou F} \longrightarrow   A_{\tar F},\ F \in \ttQ_F,
\]
where $\sou F$ and $\tar F$ are as
in~(\ref{Budu muset jet na chalupu.}), and the shorthand of~(\ref{Vubec
  se mi tam nechce.}) is used. Moreover, the diagram
\[
\xymatrix@C=4em{
\oP(T) \ot \oP(F_1) \ot A_{\sou {F_1}} \ot  \cdots \ot
\oP(F_n) \ot A_{\sou {F_n}}
\ar[r]^(.59){\id \ot a_{F_1} \ot \cdots \ot a_{F_n}} 
\ar@{<->}[d]_{\rm symmetry}^\cong
&\ar@{=}[d]^(.6){\eqref{2a}}
   \oP(T) \ot  
A_{\tar {F_1}} \ot \cdots \ot  A_{\tar {F_n}}
\\
\ar[d]_{\gamma_h \ot \id}
\oP(T) \ot \oP(F_1) \ot \cdots \ot  \oP(F_n)
\ot A_{\sou {F_1}} \ot \cdots \ot  A_{\sou {F_n}}
& \oP(T) \ot A_{\sou T}\ar[d]^(.6){a_T} 
\\
\ar@{=}[d]_(.55){\rm axiom\ of\ \ttQ_L\ }
\oP(S) \ot A_{\sou {F_1}} \ot \cdots \ot  A_{\sou {F_n}} 
& A_{\tar T} \ar@{=}[d]^{\eqref{2abis}}
\\
\oP(S) \ot A_{\sou S}   \ar[r]^{a_S} &   A_{\tar S}
}
\]
is required to commute for each $\Rada F1n \ \fib S \Xarrow h T$ in
$\ttQ_F$. 

Dually, suppose that $\ttQ$ is right bioperadic and $\cooP$ a
$\ttQ_C$-cooperad. 
A {\em $\cooP$-coalgebra} is a collection $B = \{B_c\}_{c
  \in \pi_0(\ttQ_C)}$ together with structure operations
\[
b_D : \cooP(D) \ot B_{\sou D} \longrightarrow   B_{\tar D},\ D \in \ttQ_C,
\]
\end{subequations}
such that the diagram
\[
\xymatrix@C=3em{
\cooP(D_1) \ot \cdots \ot \cooP(D_m)\ot \cooP(S) \ot B_{\sou S}
\ar[r]^(.55){\id^{\ot m} \ot b_S}
&\ar@{<->}[d]^{\eqref{2b}}_\cong
\cooP(D_1) \ot \cdots \ot \cooP(D_m)\ot B_{\tar S} 
\\
\ar@{=}[d]_{\eqref{2bbis}}\ar[u]^{\delta_h \ot \id}
\cooP(T)  \ot B_{\sou S}
& \cooP(D_1) \ot \cdots \ot \cooP(D_m) \ot B_{\sou {D_1}} \ot \cdots
\ot  B_{\sou {D_m}} \ar@{<->}[d]^{\rm \ symmetry}_\cong
\\
\cooP(T)  \ot B_{\sou T}
\ar[d]_{b_T}
& \cooP(D_1) \ot B_{\sou {D_1}} \ot \cdots \ot \cooP(D_m) \ot B_{\sou {D_m}}
\ar[d]^{b_{D_1} \ot \cdots \ot b_{D_m}}
\\
B_{\tar T} \ar@{=}[r]^(.45){\rm axiom \ of \ \ttQ_R}  &    B_{\tar {D_1}} \ot \cdots
\ot  B_{\tar {D_m}}
}
\]
commutes for an arbitrary $S \Xarrow h T \cof \ \Rada D1m$ in $\ttQ_C$.
\end{definition}

\begin{proposition}
\label{Dnes jsem byl s Jarkou na hrbitove.}
The `bivariant envelope' \/ $\ttO \mapsto \tO$ constructed in
Example~\ref{Pobezim po strese?} defines a faithful embedding of the
category of unital operadic categories to the category of bioperadic
categories $\ttQ = (\ttQ_L,\ttQ_R)$ with unital
$\ttQ_L$.  The categories of\/ $\ttO$- and\/ $\tO$-operads are
isomorphic and so are also the categories of the associated
algebras.
\end{proposition}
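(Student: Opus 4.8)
The plan is to check four things: that $\tO$ is bioperadic with $\tO_L$ unital, that $\ttO\mapsto\tO$ is a faithful functor into the category of such, that $\ttO$-operads coincide with $\tO$-operads, and that the corresponding algebra categories coincide. For the first, recall from Example~\ref{Pobezim po strese?} that $\tO=(\tO_L,\tO_R)$ is dioperadic with $\tO_L=\ttO$; since a unital operadic category is right unital, Proposition~\ref{Jarka opet nemocna.} identifies $\tO_F$ with $\ttO$ as operadic categories, and $\tO_C$ is the discrete category on $\{U_c\mid c\in\pi_0(\ttO)\}$. From the construction there, every object of $\tO$ has $R$-cardinality $[1]$, the unique cofiber of a map $h:S\to T$ of $\tO$ is $U_{\pi_0(T)}$ with identity induced cofiber maps, and the fibers of $h$ in $\tO_L$ are its fibers in $\ttO$. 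Hence every object of $\tO_F=\ttO$ has bicardinality $(a,1)$ and every object of $\tO_C$ has bicardinality $(1,1)$, so the numerology of Definition~\ref{az do dnu antiky} holds; for $\Rada F1n \ \fib S \stackrel h\longrightarrow T$ in $\tO$, the induced map from the $i$th fiber of $\id_T\circ h=h$ to the $i$th fiber of $\id_T$ gives $\pi_0(F_i)=\pi_0(U^i_T)$, and left unitality makes $U^i_T$ a chosen local terminal, so $U^i_T=U_{\pi_0(U^i_T)}=U_{\pi_0(F_i)}=C_{F_i}$, which is~\eqref{2a}; right unitality gives that the fiber of $\id_{U_c}$ in $\ttO$ is $U_c$, so $C_S=U_{\pi_0(S)}=U_{\pi_0(T)}=C_T$ and $U_{D_1}=U_{\pi_0(T)}=U_{\pi_0(S)}=C^1_S$ (using that $h$ forces $\pi_0(S)=\pi_0(T)$), which are~\eqref{2abis} and~\eqref{2b}; and~\eqref{2bbis} is vacuous since $\tO_C$ is discrete. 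Thus $\tO$ is bioperadic with $\tO_L=\ttO$ unital.

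For functoriality, given a morphism $\phi:\ttO\to\ttO'$ of unital operadic categories, set $\hat\phi:=\phi$ on underlying categories: it is a morphism $\tO_L\to\widehat{\ttO'}_L$ of operadic categories by hypothesis, and $\phi^\dagger:\tO_R\to\widehat{\ttO'}_R$ preserves fibers because a morphism of unital operadic categories carries the chosen local terminal of each component to that of the image component, so $\phi(U_{\pi_0(T)})=U'_{\pi_0(\phi T)}$, which is precisely the $\tO_R$-fiber of $\phi^\dagger(h^\dagger)$, the induced cofiber maps being identities on either side. Hence $\hat\phi$ is a morphism of bioperadic categories, $\phi\mapsto\hat\phi$ is plainly functorial, and it is faithful since $\hat\phi$ recovers $\phi$; it need not be full, since a morphism of the underlying operadic categories need not respect the chosen local terminals.

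Finally, since $\tO_F=\ttO$ as operadic categories, Definition~\ref{Je tam jen 11 stupnu!} of a $\tO$-operad is verbatim the classical definition of an $\ttO$-operad, so the two categories coincide; for algebras, $\pi_0(\tO_F)=\pi_0(\ttO)$, the source $\sou F$ computed in $\tO_L$ is the classical $\sou F$, and $\tar F=\{\pi_0(C_F)\}=\{\pi_0(U_{\pi_0(F)})\}=\{\pi_0(F)\}$, so the structure operation $a_F:\oL(F)\ot A_{\sou F}\to A_{\tar F}$ of Definition~\ref{Dnes dam kolecko do opravy.} carries exactly the same data as the classical one, and in the associativity diagram of Definition~\ref{Dnes dam kolecko do opravy.} the equalities labelled~\eqref{2a} and~\eqref{2abis} become the identifications $\sou T=\bigsqcup_i\{\pi_0(F_i)\}$ and $\pi_0(S)=\pi_0(T)$, so that the diagram reduces to the classical $\oL$-algebra associativity axiom; hence $\tO$-algebras over a $\tO_F$-operad are precisely $\oL$-algebras over the corresponding $\ttO$-operad, compatibly with the operad isomorphism. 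The one genuinely delicate step is the computation of the cofibers of arbitrary maps and the verification of~\eqref{2a} in the bioperadicity part; the rest is unwinding definitions, the construction being arranged exactly so that the $R$-side of $\tO$ records nothing beyond connected components.
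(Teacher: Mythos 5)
Your proof is correct and follows essentially the same route as the paper: the verification of~\eqref{2a}--\eqref{2bbis} via left unitality (making the $U^i_T$ chosen local terminals and using the induced map $F_i\to U^i_T$ to identify components), right unitality (fiber of $\id_{U_c}$ is $U_c$), and the discreteness of $\tO_C$ is exactly the paper's argument. The remaining parts (functoriality, faithfulness but not fullness, and the identification of operads and algebras) are declared obvious in the paper, and your unwinding of them is a sound elaboration rather than a different approach.
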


\begin{proof}
\label{Pujdeme dnes do toho mlyna?}
We claim that the dioperadic category $\tO = (\tO_L,\tO_R)$ is
bioperadic if $\ttO$ is unital. 
Let us start by verifying~\eqref{2a} for $h: S \to T\in \tO= \ttO$ as in 
\begin{equation}
\label{Uz jsem hodne unaveny.}
\Rada F1n \ \fib S \stackrel h\longrightarrow T \cof \  D,\  D :=U_c,
\ c := \pi_0(T),
\end{equation}
where $\Rada F1n$ are the fibers of $h$ in $\ttO$ and $U_c$ belongs to
the family~\eqref{Jsem posledni den v Cuernavace.}. 
By definition, $C_{F_i}$ is the chosen local
terminal object $U^i_c$ 
in the connected component of $F_i$, while $U^i_T$ is the $i$th
fiber of the identity $\id_T : T \to T$, $1 \leq i \leq n$, that is
\[
\rada {U^1_T}{U^n_T} \ \fib T \stackrel {\id_T}\longrightarrow T .
\]
By~the left unitality of $\ttO$, all $\rada {U^1_T}{U^n_T}$
are chosen local terminal. The functoriality of the fiber functors
gives the map $h_i :F_i \to U^i_T$ in the diagram
\[
\xymatrix@R=-0.3em@C=.8em{F_i
  \ar[rr]^(.55){h_i}   
&&U^i_T \hskip -.5em
\\
\hskip -2em
\hskip 1.9em\raisebox{.7em}{{\rotatebox{270}{$\fib$}}} && 
\hskip -.1em\raisebox{.7em}{{\rotatebox{270}{$\fib$}}} \hskip -.5em
\\
S \ar[rr]^{h}  \ar@/_.9em/[ddddddr]_(.4){h}   &&
T \ar@/^.9em/[ddddddl]^(.4){\id_T} \hskip -.5em
\\&& 
\\&& 
\\&& 
\\&&
\\ &&
\\
&T &
}
\]
thus $F_i$ and $U^i_T$ are in the same connected component, therefore
$U^i_c = U^i_T$ for each $1 \leq i \leq n$. This proves~\eqref{2a}. 
By definition,  $C_S = U_{a}$ and
$C_T  =  U_{b}$, where 
$a :=\pi_0(S)$ and $b := \pi_0(T)$. 
If~$S$  is connected with $T$ by a morphism, $a = b$, so $C_S =
C_T$. This verifies~\eqref{2abis}. 

For $h :S \to T \cof \ D$, $D = U_b$ with $b := \pi_0(T)$, so $U_D$ is
the fiber of $U_b \stackrel {\id_{U_b}} \longrightarrow U_b$,
therefore $U_D = U_b$ by the right unitality of $\ttO$. By definition,
$C_S$ is the cofiber of $S \stackrel {\id_S} \longrightarrow S$, i.e.\
the chosen local terminal object in the connected component of
$S$. Since $S$ and $T$ are connected by a morphism, we conclude that
$C_S = U_b = U_D$, which is~(\ref{2b}). The equalities in~\eqref{2b}
are immediate because~$\tO_C$ is discrete, cf.~Example~\ref{Pobezim po
  strese?}. The numerological assumptions about the biarities are
clearly satisfied, so $\tO$ is bioperadic as claimed.  The rest of the
the proposition is obvious.
\end{proof}

Unlike the Example~\ref{Dva dny za sebou.}, the embedding
given by the bivariant envelope is not full, since there may exist
functors between the envelopes that do not preserve the chosen local
terminals.

\begin{definition}
\label{Navlekl jsem si gumu do prdlavek.}
Let $\oM$ be a \CP-bimodule. An {\/\em $\oM$-trace\/} consists 
of an $\oL$-algebra 
$A$  and a~$\cooP$-coalgebra $B$ as in Definition~\ref{Dnes dam kolecko
  do opravy.}, 
together with structure operations
\[
c_T : \oM(T) \ot A_{\sou T} \longrightarrow   B_{\tar T},\ T \in \ttQ,
\]
such that the diagram
\[
\xymatrix@C=-.5em{
\oM(T) \ot \! A_{\sou T} \ar[r]+/l 3.4em/^(.67){c_T}  \ar@{=}[d]_{\eqref{2a}}  &
 \hskip -2em B_{\tar T}  \hskip 2em  & 
\ar@{=}[l]-/r .7em/_(.72){\rm axiom \ of \ \ttQ_R} B_{\tar {D_1}} \ot \cdots \ot
B_{\tar {D_m}} 
\\
\oM(T) \ot  \! A_{\tar{F_1}} 
\ot \cdots \ot \! A_{\tar {F_n}} && \cooP(D_1) \ot B_{\sou {D_1}} \ot \cdots \ot
\cooP(D_m) \ot B_{\sou {D_m}} \ar[u]_{b_{D_1} \ot \cdots \ot b_{D_m}}
\ar@{<->}[d]^{\ \rm symmetry}_\cong
\\
\ar[u]^(.57){\id \ot a_{F_1} \ot \cdots \ot a_{F_n}}
\oM(T) \ot \oP(F_1) \ot \! A_{\sou{F_1}} \ot \cdots \ot 
\oP(F_n)  \ot \! A_{\sou {F_n}}  && 
\cooP(D_1) \ot \cdots \ot \cooP(D_m)  \ot B_{\sou {D_1}} \ot \cdots
\ot
 B_{\sou {D_m}}
\\
\ar[d]_{\omega_h \ot
  \id^{\ot n}}
\oM(T) \ot \oP(F_1) \ot \cdots \ot \oP(F_n) \ot \! A_{\sou{F_1}} \ot
\cdots \ot \! A_{\sou {F_n}} \ar@{<->}[u]^{\rule{.7em}{0em} \rm symmetry}_\cong && 
\cooP(D_1) \ot \cdots \ot \cooP(D_m) \ot B_{\tar S}
\ar@{=}[u]_{\eqref{2b}}
\\
\cooP(D_1) \ot \cdots \ot \cooP(D_m) \ot \oM(S) 
\ot \!\! A_{\sou{F_1}} \ot \cdots \ot \!\! A_{\sou {F_n}} 
\ar@{=}[rr]^(.55){\ \rm  axiom\ of\ \ttQ_L}&&
\cooP(D_1) \ot \cdots \ot \cooP(D_m) \ot \oM(S) 
\ot \!\! A_{\sou{S}}\ar[u]_{\id^{\ot m} \ot c_S}
}
\]
commutes for each morphism 
$\Rada F1n \ \fib S \stackrel h\to T \cof \ \Rada D1m$ in $\ttQ$.
\end{definition}

Notice that the algebra operations $a_{F_i} : \oP(F_i) \ot A_{\sou
  {F_i}} \to  A_{\tar {F_i}}$ in the diagram in
Definition~\ref{Navlekl jsem si gumu do prdlavek.} are defined, since
$F_i \in \ttQ_F$ for each $i \in [n]$.
The coalgebra operations are defined for the similar reasons.

\begin{example}
\label{Vcera jsem byl na schuzi Neuronu.}
Let us return to  the context of Example~\ref{Zitra jedu pro
  vetvicky.}. 
The traces for the `traditional' right operadic modules were
introduced in~\cite[Definition~2.6]{cyclo}. 
Denote $\Cycl( n) := \Span{\Sigma_{n-1}}$, the $\bbk$-linear
span of the symmetric group on \hbox{$n\!-\!1$} elements. As argued
in~\cite[Example~2.8]{cyclo}, the collection $\Cycl = \{\Cycl (n)\}_{n
  \geq 1}$ is a right module over the operad  \hbox{$\Ass$} for
associative algebras. A~$\Cycl$-trace is given by a map
$T:A \to W$ from an associative algebra $A$ to a vector space $W$ such
that 
\[
T(ab) = T(ba), \ a,b \in A,
\] 
i.e.\ $T$ is a trace of an associative algebra in the traditional sense. 
\end{example}

\begin{example}
\label{Necham rozmrznout angrest z Jarciny zahradky.}
The augmentation ideal  $\overline \Ass$ \/ in the operad $\Ass$ 
for associative algebras bears an obvious right
$\Ass$-action.
An \/ $\overline \Ass$-trace is given by an associative algebra $A$, a
vector space $B$ and a bilinear map \hbox{$T: A \ot A \to
B$} satisfying
\[
T(ab,c) = T(a,bc), \  \hbox { for all }  a,b,c \in A.
\]
More interesting examples of traces involving Stasheff's associahedra
acting on the cyclohedra can be found in~\cite[Section~2]{cyclo}.
\end{example}

\section*{Epilogue}

\lettrine{\color{red} T} {he} 
structures studied so far have alternatives in
the form of objects $\ttQ = (\ttQ',\ttQ'')$, where $\ttQ'$ and
$\ttQ''$ are operadic categories sharing the same underlying
category $\ttQ$. Morphisms of $\ttO$ would then possess two sets of fibers, 
so instead of~\eqref{Dnes bude cely den prset.} we would have
\begin{equation}
\label{Uz zacina kruta zima.}
\genfrac{}{}{0pt}{3}{\hbox{$\Rada {F'}1n\ \fib$}}{\hbox{$\Rada
    {F''}1m\ \fib$}}\big\} \ S \stackrel h \longrightarrow T .
\end{equation}
Given a $\ttQ'$-operad $\oP'$ and a $\ttQ''$-operad $\oP''$, the
bimodules in Definition~\ref{Dnes jsem byl s Jarkou na obede.} would be
replaced by `contramodules' $\oM = \{\oM(S)\}_{S \in \ttQ}$ with structure operations
\begin{equation}
\label{Uz mam docela hlad.}
\omega_h:
\oP'(F'_1) \ot \cdots \ot \oP'(F'_n) \ot \oM(T) \ot 
\oP''(F''_1) \ot \cdots \ot \oP''(F''_m) \longrightarrow 
\oM(S)
\end{equation}
given for any $h : S \to T$ in~\eqref{Uz zacina kruta zima.},
visualized in Figure~\ref{vylet do valecne zony}.  
\begin{figure}
  \centering
\psscalebox{1.0 1.0} 
{
\begin{pspicture}(0,-2.6286852)(11.88,2.6286852)
\psline[linecolor=black, linewidth=0.04](1.14,2.4086852)(0.74,2.4086852)
\psline[linecolor=black, linewidth=0.04](1.14,2.2086854)(0.74,2.2086854)
\psline[linecolor=black, linewidth=0.04](1.14,2.0086854)(0.74,2.0086854)
\psline[linecolor=black, linewidth=0.04](1.14,1.2086853)(0.74,1.2086853)
\psline[linecolor=black, linewidth=0.04](1.14,-1.1913147)(0.74,-1.1913147)
\psline[linecolor=black, linewidth=0.04](1.14,-1.3913147)(0.74,-1.3913147)
\psline[linecolor=black, linewidth=0.04](1.14,-1.5913147)(0.74,-1.5913147)
\psline[linecolor=black, linewidth=0.04](1.14,-2.3913147)(0.74,-2.3913147)
\psline[linecolor=black, linewidth=0.04](9.54,2.6086853)(9.54,-2.5913148)(10.74,-2.5913148)(10.74,2.6086853)(9.54,2.6086853)
\psline[linecolor=black, linewidth=0.04](10.74,2.4086852)(11.14,2.4086852)
\psline[linecolor=black, linewidth=0.04](10.74,2.2086854)(11.14,2.2086854)
\psline[linecolor=black, linewidth=0.04](10.74,2.0086854)(11.14,2.0086854)
\psline[linecolor=black, linewidth=0.04](10.74,1.8086853)(11.14,1.8086853)
\psline[linecolor=black, linewidth=0.04](10.74,1.6086853)(11.14,1.6086853)
\psline[linecolor=black, linewidth=0.04](10.74,1.4086853)(11.14,1.4086853)
\psline[linecolor=black, linewidth=0.04](10.74,-1.1913147)(11.14,-1.1913147)
\psline[linecolor=black, linewidth=0.04](10.74,-1.3913147)(11.14,-1.3913147)
\psline[linecolor=black, linewidth=0.04](10.74,-1.5913147)(11.14,-1.5913147)
\psline[linecolor=black, linewidth=0.04](10.74,-1.7913147)(11.14,-1.7913147)
\psline[linecolor=black, linewidth=0.04](10.74,-1.9913146)(11.14,-1.9913146)
\psline[linecolor=black, linewidth=0.04](10.74,-2.1913147)(11.14,-2.1913147)
\psline[linecolor=black, linewidth=0.04](10.74,-2.3913147)(11.14,-2.3913147)
\psline[linecolor=black, linewidth=0.04](10.74,1.2086853)(11.14,1.2086853)
\rput(1.54,1.8086853){$F'_1$}
\rput(1.54,-1.7913147){$F'_n$}
\rput(10.14,0.008685303){$S$}
\rput[r](0.34,1.8086853){$a_1$}
\rput(1.14,1.6086853){$\cdot$}
\rput(1.14,1.4086853){$\cdot$}
\rput[l](11.74,0.008685303){$b$}
\rput(10.94,0.8086853){$\cdot$}
\rput(10.94,0.6086853){$\cdot$}
\rput(10.94,0.4086853){$\cdot$}
\rput(0.94,1.8086853){$\cdot$}
\rput(0.94,1.6086853){$\cdot$}
\rput(0.94,1.4086853){$\cdot$}
\rput(10.94,0.20868531){$\cdot$}
\rput(10.94,0.008685303){$\cdot$}
\rput(10.94,-0.1913147){$\cdot$}
\rput(0.94,-1.7913147){$\cdot$}
\rput(0.94,-1.9913146){$\cdot$}
\rput(0.94,-2.1913147){$\cdot$}
\rput(2.14,0.6086853){$\cdot$}
\rput(2.14,0.4086853){$\cdot$}
\rput(2.14,0.20868531){$\cdot$}
\rput(2.14,0.008685303){$\cdot$}
\rput(2.14,-0.1913147){$\cdot$}
\rput(2.14,-0.3913147){$\cdot$}
\rput(2.14,-0.5913147){$\cdot$}
\rput(10.94,-0.3913147){$\cdot$}
\rput(0.54,-1.7913147){$\left\{\rule{0em}{2em}\right.$}
\rput[r](0.34,-1.7913147){$a_n$}
\rput(0.54,1.8086853){$\left\{\rule{0em}{2em}\right.$}
\rput(11.54,0.008685303){$\left.\rule{0em}{7.2em}\right\}$}
\rput(10.94,-0.5913147){$\cdot$}
\rput(10.94,-0.7913147){$\cdot$}
\psline[linecolor=black, linewidth=0.04](5.94,2.4086852)(6.34,2.4086852)
\psline[linecolor=black, linewidth=0.04](5.94,2.2086854)(6.34,2.2086854)
\psline[linecolor=black, linewidth=0.04](5.94,2.0086854)(6.34,2.0086854)
\psline[linecolor=black, linewidth=0.04](5.94,1.2086853)(6.34,1.2086853)
\psline[linecolor=black, linewidth=0.04](5.94,-1.1913147)(6.34,-1.1913147)
\psline[linecolor=black, linewidth=0.04](5.94,-1.3913147)(6.34,-1.3913147)
\psline[linecolor=black, linewidth=0.04](5.94,-1.5913147)(6.34,-1.5913147)
\psline[linecolor=black, linewidth=0.04](5.94,-2.3913147)(6.34,-2.3913147)
\psline[linecolor=black, linewidth=0.04](4.14,2.6086853)(4.14,-2.5913148)(2.94,-2.5913148)(2.94,2.6086853)(4.14,2.6086853)
\psline[linecolor=black, linewidth=0.04](9.54,2.4086852)(9.14,2.4086852)
\psline[linecolor=black, linewidth=0.04](9.54,2.2086854)(9.14,2.2086854)
\psline[linecolor=black, linewidth=0.04](9.54,2.0086854)(9.14,2.0086854)
\psline[linecolor=black, linewidth=0.04](9.54,1.8086853)(9.14,1.8086853)
\psline[linecolor=black, linewidth=0.04](9.54,1.6086853)(9.14,1.6086853)
\psline[linecolor=black, linewidth=0.04](9.54,1.4086853)(9.14,1.4086853)
\psline[linecolor=black, linewidth=0.04](9.54,-1.1913147)(9.14,-1.1913147)
\psline[linecolor=black, linewidth=0.04](9.54,-1.3913147)(9.14,-1.3913147)
\psline[linecolor=black, linewidth=0.04](9.54,-1.5913147)(9.14,-1.5913147)
\psline[linecolor=black, linewidth=0.04](9.54,1.2086853)(9.14,1.2086853)
\rput(5.54,1.8086853){$F''_1$}
\rput(5.54,-1.7913147){$F''_m$}
\rput(3.54,0.008685303){$T$}
\rput[l](6.74,1.8086853){$b_1$}
\rput(5.94,1.6086853){$\cdot$}
\rput(5.94,1.4086853){$\cdot$}
\rput[r](8.54,0.008685303){$a$}
\rput(9.34,0.8086853){$\cdot$}
\rput(9.34,0.6086853){$\cdot$}
\rput(9.34,0.4086853){$\cdot$}
\rput(6.14,1.8086853){$\cdot$}
\rput(6.14,1.6086853){$\cdot$}
\rput(6.14,1.4086853){$\cdot$}
\rput(9.34,0.20868531){$\cdot$}
\rput(9.34,0.008685303){$\cdot$}
\rput(9.34,-0.1913147){$\cdot$}
\rput(6.14,-1.7913147){$\cdot$}
\rput(6.14,-1.9913146){$\cdot$}
\rput(6.14,-2.1913147){$\cdot$}
\rput(5.14,0.6086853){$\cdot$}
\rput(5.14,0.4086853){$\cdot$}
\rput(5.14,0.20868531){$\cdot$}
\rput(5.14,0.008685303){$\cdot$}
\rput(5.14,-0.1913147){$\cdot$}
\rput(5.14,-0.3913147){$\cdot$}
\rput(5.14,-0.5913147){$\cdot$}
\rput(9.34,-0.3913147){$\cdot$}
\rput[l](6.74,-1.7913147){$b_m$}
\rput{-178.4686}(13.125537,-3.4072094){\rput(6.54,-1.7913147){$\left\{\rule{0em}{2em}\right.$}}
\rput{-178.4686}(13.029327,3.7915046){\rput(6.54,1.8086853){$\left\{\rule{0em}{2em}\right.$}}
\rput(8.74,0.008685303){$\left\{\rule{0em}{7.2em}\right.$}
\rput(9.34,-0.5913147){$\cdot$}
\rput(9.34,-0.7913147){$\cdot$}
\rput[l](7,0.1){\Large$\stackrel \omega\longrightarrow$}
\psline[linecolor=black, linewidth=0.04](1.14,2.6086853)(1.14,1.0086854)(2.34,1.8086853)(1.14,2.6086853)
\psline[linecolor=black, linewidth=0.04](1.14,-0.9913147)(1.14,-2.5913148)(2.34,-1.7913147)(1.14,-0.9913147)
\psline[linecolor=black, linewidth=0.04](2.34,1.8086853)(2.94,1.8086853)
\psline[linecolor=black, linewidth=0.04](2.34,-1.7913147)(2.94,-1.7913147)
\psline[linecolor=black, linewidth=0.04](5.94,2.6086853)(5.94,1.0086854)(4.74,1.8086853)(5.94,2.6086853)(5.94,2.6086853)
\psline[linecolor=black, linewidth=0.04](5.94,-0.9913147)(5.94,-2.5913148)(4.74,-1.7913147)(5.94,-0.9913147)(5.94,-0.9913147)
\psline[linecolor=black, linewidth=0.04](4.74,1.8086853)(4.14,1.8086853)
\psline[linecolor=black, linewidth=0.04](4.74,-1.7913147)(4.14,-1.7913147)(4.14,-1.7913147)
\psline[linecolor=black, linewidth=0.04](9.54,-1.7913147)(9.14,-1.7913147)
\psline[linecolor=black, linewidth=0.04](9.54,-1.9913146)(9.14,-1.9913146)
\psline[linecolor=black, linewidth=0.04](9.54,-2.1913147)(9.14,-2.1913147)
\psline[linecolor=black, linewidth=0.04](9.54,-2.3913147)(9.14,-2.3913147)
\end{pspicture}
}
\caption{A $\frac12$PROPic version of Figure~\ref{Porad mne pali ty
    ruce.}. The rectangles
represent the  corresponding pieces of
$\oM$, the
triangles the pieces of $\oP'$ and $\oP''$, respectively.
The equalities $a = a_1 + \cdots + a_n$ and $b = b_1 + \cdots + b$ are assumed.}
\label{vylet do valecne zony}
\end{figure}
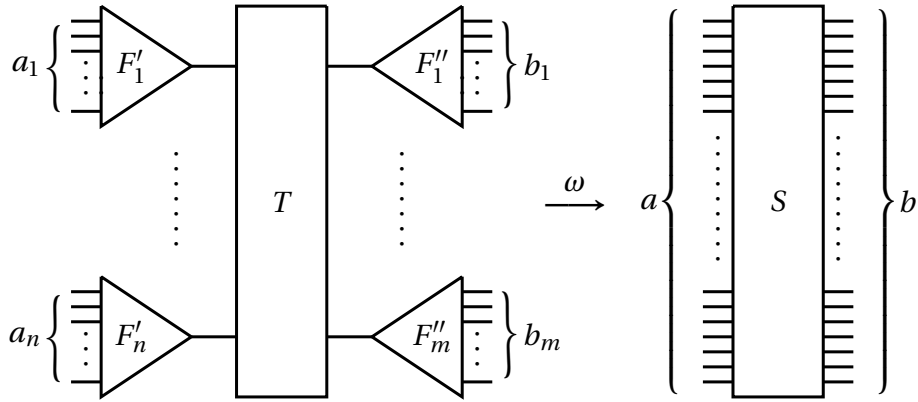

A typical example of $\ttQ = (\ttQ',\ttQ'')$ is the category 
$\sFSet^2 := \sFSet \times \sFSet$. The structure
operations~(\ref{Uz mam docela hlad.}) are of the form
\[
\oP'(a_1) \ot \cdots \ot \oP'(a_n) \ot \oM(n,m) \ot 
\oP''(b_1) \ot \cdots \ot \oP''(b_n) \longrightarrow 
\oM(a_1 + \cdots + a_n,b_1 + \cdots + b_m)
\]
and defined for all $n,m,\Rada a1n,\Rada b1m \in \bbN$. 

\begin{example}
Each $\frac12$PROP  $\prop =
\{\prop(n.m)\}_{n,m \in \bbN}$, cf.~\cite[Definition~1]{mv}, contains
the `lower' and the `upper' suboperads $\prop^\uparrow =  
\{\prop^\uparrow(n)\}_{n \in \bbN}$ \ and \ 
$\prop^\downarrow = \{\prop^\downarrow(m)\}_{m \in \bbN}$
given by
\[
\prop^\uparrow(n) := \prop(n,1) \ \hbox { and } \
\prop^\downarrow(n) := \prop(1,m) \ \hbox { for }  n,m \in \bbN.
\]
The triple $(\prop^\uparrow,\prop,\prop^\downarrow)$ is a natural 
$\sFSet^2$-contramodule.
\end{example}

One may wish to develop a 
theory parallel to that for di- and bioperadic categories
also for the above structures generalizing 
$\frac12$PROPs, but we won't do it here because in our opinion 
the time is not yet ripe for that.
A natural question is whether also fully-fledged  PROPs or at least
properads 
can be described as operad-type objects over an operatic-like
category. We don't have the slightest idea if or how to do it. 
We welcome any suggestion in this direction.

\frenchspacing



\begin{thebibliography}{10}

\bibitem{Artamonov}
V.A.~Artamonov.
\newblock {Multioperator algebras and clones
of polylinear operators}.
\newblock{\em Russian Mathematical Surveys}, 24(1):45--57, 1969.


\bibitem{batanin:globular}
M.A. Batanin.
\newblock {Monoidal globular categories as a natural environment for the theory
  of weak {$n$}-categories}.
\newblock {\em Adv. Math.}, 136(1):39--103, 1998.

\bibitem{env}
M.A.~Batanin and M.~Markl.
\newblock {Operadic categories as a natural environment for Koszul
  duality}.
\newblock{\em Compositionality}, 5(3), 2023.

\bibitem{duodel}
M.A.~Batanin and M.~Markl.
\newblock {Operadic categories and duoidal Deligne's conjecture}.
\newblock {\em Adv. Math.}, 285:1630--1687, 2015.

\bibitem{blob}
M.A.~Batanin and M.~Markl.
\newblock {Operads, operadic categories and the blob
  complex}. 
\newblock  {\em Applied Categorical Structures}, 32:6, 2024. 

\bibitem{kapranov:langlands}
M.M. Kapranov.
\newblock Analogies between the {L}anglands correspondence and topological
  quantum field theory.
\newblock In {\em Functional analysis on the eve of the 21st century, {V}ol.\ 1
  ({N}ew {B}runswick, {NJ}, 1993)}, volume 131 of {\em Progr. Math.}, pages
  119--151. Birkh\"auser Boston, Boston, MA, 1995.


\bibitem{Homology}
S. MacLane. {\em Homology.\/}
\newblock Springer Verlag, 1963.

\bibitem{markl:cf}
M.~Markl.
\newblock {A compactification of the real configuration space as an operadic
  completion}.
\newblock {\em {J. Algebra}}, 215:185--204, 1999.


\bibitem{zebrulka}
M.~Markl,
\newblock {Models for operads\/}.
\newblock  {\it Communications in Algebra,} 24(4):1471--1500, 1996.

\bibitem{markl:handbook}
M.~Markl.
\newblock {Operads and {PROP}s}.
\newblock In {\em {Handbook of algebra. {V}ol. 5}}, pages 87--140.
  Elsevier/North-Holland, Amsterdam, 2008.

\bibitem{cyclo}
M.~Markl.
\newblock {Simplex, associahedron, and cyclohedron}.
\newblock {\em Contemporary Mathematics}, vol.\ 227, 235-265, 1999.


\bibitem{mv}
M.~Markl and A.A.~Voronov. 
\newblock {PROPped-up graph cohomology}.
In {\em Algebra, Arithmetic, and Geometry. Volume II: In Honor of
Yu.I.~Manin}, Birkh\"auser, 249--282, Boston 2009.

\bibitem{Mitchell}
B.~Mitchell.
\newblock The full imbedding theorem.
\newblock {\em American Journal of Mathematics,}
86(3): 619--637, 1964.


\end{thebibliography}
\end{document}